\definecolor{burntorange}{cmyk}{0,0.52,1,0}
\newcommand{\bigtprod}{\mathop{{\prod}^{}}}
\newcommand{\bigtsum}{\mathop{{\sum}^{\oplus}}}
\newcommand{\tsum}{\sum^{\oplus}}
\newtheorem{theorem}{Theorem}[section]
\newtheorem{lemma}[theorem]{Lemma}
\newtheorem{proposition}[theorem]{Proposition}
\newtheorem{property}[theorem]{Property}
\newtheorem{corollary}[theorem]{Corollary}
\theoremstyle{definition}
\newtheorem{definition}[theorem]{Definition}
\theoremstyle{remark}
\newtheorem{remark}[theorem]{Remark}
\newtheorem{example}[theorem]{Example}
\newcommand{\new}[1]{{\em #1}}
\DeclareMathAlphabet{\mathbbold}{U}{bbold}{m}{n}
\newcommand{\zero}{\mathbbold{0}}
\newcommand{\unit}{\mathbbold{1}}
\newcommand{\zeror}{\mathbbold{0}} 
\newcommand{\mv}{m} 
\newcommand{\rel}{\mathcal{R}}
\newcommand{\R}{\mathbb R}
\newcommand{\vall}{\mathrm v}
\newcommand{\sval}{\mathrm{sv}}
\newcommand{\smax}{\mathbb{S}_{\max}}
\newcommand{\rmax}{\mathbb{R}_{\max}}
\newcommand{\tmax}{\mathbb{T}_{\max}}
\newcommand{\bmaxs}{{\mathbb B}_{{\mathrm s}}}
\newcommand{\PF}{\mathcal{P}_{\!\mathrm{f}}}
\newcommand{\Sp}{\mathfrak{S}}
\newcommand{\X}{\mathsf{X}}
\newcommand{\per}{\mathrm{per}}
\newcommand{\psd}{\operatorname{\mathsf{TPSD}}}
\newcommand{\pd}{{\operatorname{\mathsf{TPD}}}}
\newcommand{\upd}{{\operatorname{\mathsf{UTP}}}}
\newcommand{\ps}{P_{A}}
\newcommand{\tr}{\mathrm{tr}}
\newcommand{\card}{\mathrm{card}}
\newcommand{\elf}{b} 
\newcommand{\balance}{\,\nabla\,}
\newcommand{\notbalance}{\!\centernot{\,\balance}}
\newcommand{\leqsign}{\leq}
\newcommand{\geqsign}{\geq}
\newcommand{\lsign}{<}
\newcommand{\nleqsign}{\not\leqsign}
\newcommand{\gsign}{>}
\newcommand{\adj}{\mathrm{adj}}
\newcommand{\graph}{\mathcal G}
\newcommand{\ext}{\mbox{$\bigwedge$}}
\newcommand{\cycle}{\sigma}
\newcommand{\permutation}{\pi}
\newcommand{\Azero}{\underline{A}}
\newcommand{\trop}[1][]{\ifthenelse{\equal{#1}{}}{ \mathbb{T} }{ \mathbb{T}(#1) }}
\renewcommand{\geq}{\geqslant}
\renewcommand{\leq}{\leqslant}
\renewcommand{\succeq}{\succcurlyeq}
\renewcommand{\le}{\leq}
\newcommand{\botelt}{\bot}
\newcommand{\N}{\mathbb{N}}
\newcommand{\Z}{\mathbb{Z}}
\DeclareMathOperator{\uval}{ldeg}
\newcommand{\rfield}{\mathcal{L}}
\newcommand{\vgroup}{\Gamma}
\newcommand{\sign}{\mathrm{sgn}}
\begin{document}
\begin{frontmatter}
\title{Spectral Properties of Positive Definite Matrices over Symmetrized Tropical Algebras and Valued Ordered fields}
\author[1]{Marianne Akian\corref{cor}}
\ead{marianne.akian@inria.fr}
\author[1]{Stephane Gaubert}
\ead{stephane.gaubert@inria.fr}
\author[2]{Dariush Kiani\fnref{insf}}
\ead{dkiani@aut.ac.ir}
\author[2]{Hanieh Tavakolipour\fnref{insf}\fnref{postdoc}}
\ead{h.tavakolipour@aut.ac.ir}
\affiliation[1]{organization={Inria and CMAP, Ecole polytechnique, CNRS, Institut Polytechnique de Paris},
country={France}}
\affiliation[2]{organization={Amirkabir University of Technology, Department of Mathematics and Computer Science},
country={Iran}}
\cortext[cor]{Corresponding author}
\fntext[insf]{The study of third and forth authors was funded by Iran National Science Foundation (INSF) (Grant No. 99023636).}
\fntext[postdoc]{This work began when the forth author was a postdoc at Inria and CMAP, Ecole polytechnique, CNRS, Institut Polytechnique de Paris}
\date{\today}


\begin{abstract}
  We investigate the properties of positive definite and positive semi-definite symmetric matrices within the framework of symmetrized tropical algebra, an extension of tropical algebra adapted to ordered valued fields.
  We focus on the eigenvalues and eigenvectors of these matrices. We
 prove that the eigenvalues of a positive (semi)-definite matrix in the tropical symmetrized setting coincide with its diagonal entries. Then, we show that the images by the valuation of the eigenvalues of a positive definite matrix over a valued nonarchimedean ordered field coincide with the eigenvalues of an associated matrix in the symmetrized tropical algebra. Moreover, under a genericity condition, we characterize the images of the eigenvectors under the map keeping track both of the nonarchimedean valuation and sign, showing that they coincide with tropical eigenvectors in the symmetrized algebra.
  These results 
offer new insights into the spectral theory of matrices over tropical semirings, and provide combinatorial formul\ae\ for log-limits of eigenvalues and eigenvectors of parametric families of real positive definite matrices.
\end{abstract}



\begin{keyword}
Positive definite matrices; eigenvalues; eigenvectors; tropical algebra; max-plus algebra; symmetrized tropical semiring; ordered valued fields.


\MSC[2020]{Primary 15A18, 12J15, 12J25, 15A80, 16Y60; Secondary 14T10, 16Y20}

\end{keyword}
\setcounter{tocdepth}{3}

\end{frontmatter}

\section{Introduction}

\subsection{Motivation}

Tropical algebra has been introduced by several authors under various names including max-plus algebra or max algebra. It has opened up new avenues in both applied fields, especially in combinatorial optimization and mathematical modelling~\cite{baccelli1992synchronization,butkovivc2010max}, and in pure areas, in particular algebraic geometry~\cite{viro2001dequantization,itenberg2009tropical,maclagan2015introduction}.
The tropical semifield, denoted here $\rmax$, consists of the set $\R\cup\{-\infty\}$ equipped with operations where the maximum of two real numbers replaces addition, and standard addition replaces multiplication.
The absence of a true negation --preventing ordinary term cancellation--
has motivated the introduction of the 
{\em symmetrized} tropical semiring $\smax$ in~\cite{maxplus90b}, as an extension of $\rmax$ in which a formal minus sign serves as a substitute for negation.
This semiring has been employed
as a tool to solve systems of linear equations
and found numerous applications, particularly in the study of matrices, eigenvalues, eigenvectors, and polynomials, and in the tropicalization of linear programming,
see for instance \cite{baccelli1992synchronization,cramer-guterman,benchimol2013,tavakolipour2021,adi}. 
A related construction, called the {\em real tropical hyperfield} or the {\em signed  tropical hyperfield}, was considered in the framework of hyperfields with the aim to study ``real tropicalizations'', i.e., images of real non-archimedean algebraic sets by a map keeping track of valuation and sign, see \cite{viro2010hyperfields,viro2001dequantization}. Recent studies of this hyperfield include \cite{baker2018descartes,Lorsch22,gunn,gunn2}. 
The semiring, $\smax$ with its associated partial order relations,
can be thought of as a hyperfield system as in \cite{Rowen2,AGRowen}.
See~\cite{alessandrini,allamigeon2020tropical,Jell2020} for more information
on ``real tropicalizations''. 

The tropical algebra is intimately related to the notion of valuation over a field. Indeed, a valuation can be seen as a ``morphism'' from a field to the tropical algebra $\rmax$, and to make this morphism property rigorous, one can use the concepts of hyperfields or semiring systems
 (see for instance \cite{baker2018descartes,Rowen2,AGRowen}). 
Valuations are related to asymptotics and 
the role of tropical algebra in asymptotics was recognized by Maslov \cite[Ch. VIII]{maslov1987methodes}, see also \cite{kolokoltsov2013idempotent}, and 
by Viro \cite{viro2001dequantization}.
Valuations are also a way to define notions of tropical 
geometry, see for instance \cite{itenberg2009tropical,maclagan2015introduction}.
Valuations with general ordered groups of values can also be considered,
together with the associated tropical algebra or hyperfield or semiring system.

Similar properties can be obtained by replacing valuations by signed valuations and the tropical semiring by the symmetrized tropical semiring.
Indeed, a signed valuation over an ordered field assigns to any element in the field, the valuation while also indicating their sign.
Then, a signed valuation can be seen as a ``morphism'' from the field to $\smax$. Such a map takes its values in the subset $\smax^\vee$ of signed elements in $\smax$, which also correspond to the elements of the signed tropical hyperfield.
Also, to make this morphism property rigorous, one can use the concepts of hyperfields or semiring systems.
When applied to polynomials, signed valuations reveal the "signed roots"~\cite{gunn,gunn2,tavakolipour2021}.

Positive (semi-)definite symmetric matrices are of particular interest due to their role in various mathematical and engineering applications, such as stability analysis, optimization problems, and systems theory.
Yu characterized the tropicalization of the cone of symmetric positive semidefinite matrices, showing that it is governed by the positivity of $2\times 2$ principal tropical minors~\cite{yu2015tropicalizing}.
This result was extended to the signed case in~\cite{tropicalization},
which also investigates the tropicalization of the cones of completely positive and copositive matrices.
Tropicalizations of other cones relevant to optimization have been studied
in~\cite{Niv_total,Yu2022}. More recently, connections have been identified
between the tropicalization of principal minors of positive semidefinite matrices and discrete concave functions~\cite{yuarxiv2024}.

In classical algebra, the properties of positive definite matrices, particularly their eigenvalues and eigenvectors, are well understood and have been extensively studied. One of the aims of this paper is to introduce and study the eigenvalues and eigenvectors of tropical positive definite matrices in the context of symmetrized tropical algebra.
As a consequence, we will be able to characterize the signed valuations of the eigenvalues and eigenvectors of a positive definite matrix over a real closed field. 

\subsection{Main results}

Our primary contribution is the proof that, in $\smax$, the eigenvalues of a positive (semi)-definite matrix are given by its diagonal entries, see~\Cref{sec:eig}. This result 
offers practical computational advantages, as it simplifies the determination of eigenvalues in symmetrized tropical setting. We build upon the results presented in \cite{tavakolipour2021} and specially \Cref{coro2-uniquefact} to demonstrate that the characteristic polynomial of a positive definite matrix over $\smax$ admits a unique factorization. This result helps us to define the multiplicity of the eigenvalues of such a matrix and to show that the multiplicity of 
any eigenvalue coincides with the number of its occurrences as a diagonal element of the matrix.

Some notions of generalized eigenvectors associated to the eigenvalues over $\rmax$ have already been investigated in the literature in particular in the work of Izhakian and Rowen~\cite{izhakianmatrix3} and in the works of Nishida and co-authors, see \cite{Nishida2020,Nishida2021,nishida2021independence}. In \Cref{eig_vec}, we define a (generalized) notion of geometric eigenvalue and eigenvector over $\smax$. Moreover in \Cref{smaxeigenvector-ws}, we introduce refined concepts of 
weak and strong eigenvectors, respectively.
This offers more tools for analyzing the algebraic structure of matrices, 
and allows us in some cases to determine eigenvectors using the adjoint matrix (see \Cref{spec-eig-vector}).

Using these tools, we identify candidates for all the eigenvectors of a positive definite matrix over $\smax$ (see \Cref{coro-unique-eigen}). Furthermore in \Cref{subsec:kleen}, we characterize these candidate eigenvectors using the Kleene star operation. Such a characterization 
may be thought as a generalization of the notion of eigenvector over $\rmax$
introduced in \cite{Nishida2020}.
Then, in \Cref{sec-generic}, we show that generically these candidate eigenvectors are the unique eigenvectors. 

Finally, in \Cref{sec:apps}, 
we show that generically, the signed valuations of the eigenvalues and eigenvectors of a positive definite matrix over a real closed field coincide with the signed tropical eigenvalues and eigenvectors of the signed valuation of the matrix.

This can be compared to a characterization of the asymptotic behavior of the eigenvalues and eigenvectors of a parametric family of positive definite matrices over an ordered field, using the eigenvalues and eigenvectors of a positive definite matrix over $\smax$.
This result provides 
new insights into the nature of eigenvalues  and eigenvectors of usual positive (semi-)definite matrices.
We also show a Gershgorin type bound for the eigenvalues of a positive definite real matrix.

\bigskip

The paper is structured as follows. We begin with a review in \Cref{sec-elem} of the basic principles of tropical and  symmetrized tropical algebra, and in \Cref{sec-matpol} of the definitions and known or elementary properties of the algebraic constructions within these frameworks, such as matrices, polynomials, eigenvalues and eigenvectors. We then explore in \Cref{sec:3} the concepts of positive (semi)-definite matrices over $\smax$, detailing the theoretical developments and methods used to derive our results. In particular, we characterize the eigenvalues of these matrices over $\smax$. In \Cref{sec:3p}, we give several characterizations of the eigenvectors of these matrices over $\smax$. 
Finally, in \Cref{sec:apps}, we examine the relationship between the eigenvalues of matrices over ordered fields and their counterparts in symmetrized tropical algebra. We finish by illustrating the results by some numerical results on the eigenvalues and eigenvectors of parametric families of positive definite matrices. 

\section{Definitions and elementary properties}\label{sec-elem}
In this section, we review some necessarily definitions, notations and results of max-plus or tropical and symmetrized max-plus or tropical algebra. See~\cite{baccelli1992synchronization, butkovivc2010max,maclagan2015introduction} for more information. 

\subsection{Preliminaries of max-plus or tropical algebra $\rmax$ and $\tmax$} 
\begin{definition}
Let $\R$ be the set of real numbers. The tropical semiring, $\rmax$, is the set $\R \cup \{-\infty\}$ 
equipped with
  the addition $(a,b)\mapsto a\oplus b:=\max\{a,b\}$, with the zero element $\zero:=-\infty$ and
 the multiplication  $(a,b)\mapsto a\odot b:=a+b$, with the unit element $\unit:=0$.
\end{definition}

\begin{example}
 Over $\rmax$, we have
\begin{itemize}
\item $1 \oplus -2 = 1$
 \item $6 \odot 2 = 8$
 \item $2^{ 3}= 2\odot  2\odot 2= 6$.
 \end{itemize}
 \end{example}
We shall also use the more general family of tropical semifields defined 
as follows, see also \cite{tavakolipour2021}.
\begin{definition} \label{tmax}
Given a (totally) ordered abelian group $(\vgroup,+,0,\leq)$,
we consider an element $\botelt$ satisfying
$\botelt \leq a$ for all $a\in\vgroup$, and  which 
does not belong to $\vgroup$.
Then, the {\em tropical semifield} over $\vgroup$, denoted 
 $\tmax(\vgroup)$, is the set $\vgroup \cup\{\botelt\}$,
equipped with 
the addition 
$(a,b) \mapsto a\oplus b:= \max(a,b)$, with zero element $\zero:=\botelt$,
and multiplication $(a,b)\mapsto a\odot b:= a+b$, and 
 $\botelt \odot a=a \odot\botelt= \botelt$,  for all $a,b\in \vgroup$,
so with unit $\unit:=0$.
\end{definition}
In particular, the zero element $\botelt$ is absorbing.
The $n$-th power of an element $a\in\vgroup$ for the multiplicative law $\odot$,
$a^n:=a \odot \ldots \odot a$  ($n$-times), coincides with the
sum  $a+ \dots + a$ ($n$-times), also denoted by $na$.
We say that the group $\vgroup$ is {\em divisible},
if for all $a\in \vgroup$ and for all positive integers $n$,
there exists $b$ such that $nb=a$. 
In this case, $b$ is unique (since $\vgroup$ is ordered).
We say that $\vgroup$ is {\em trivial} if it is equal to $\{0\}$.
When $\vgroup=\R$, we recover $\rmax$. 

 \subsection{Preliminaries of symmetrized max-plus algebra $\smax$}

Here we recall the construction and basic properties of the symmetrized
 tropical semiring. We refer the reader to \cite{baccelli1992synchronization,gaubert1992theorie,cramer-guterman} for information at a more detailed level
in the case where $\vgroup=\R$. We describe here the generalization to the case
of any ordered group $\vgroup$, which was presented in \cite{tavakolipour2021}.

Let us consider the set $\tmax^2:=\tmax\times \tmax$ endowed with operations $\oplus$ and $\odot$:
\[(a_1,a_2) \oplus (b_1,b_2) =(a_1\oplus b_1, a_2 \oplus b_2),\]
\[(a_1,a_2) \odot (b_1,b_2) = (a_1 b_1 \oplus a_2  b_2, a_1  b_2 \oplus a_2  b_1),\]

with $\zero:=(\botelt,\botelt)$ as the zero element and $\unit:=(0, \botelt)$ as the unit element. 
Define the following three operators on  $a= (a_1, a_2)\in \tmax^2$:
\begin{center}
\begin{tabular}{ll}
$\ominus a = (a_2, a_1)$ & minus operator $\tmax^2\to \tmax^2$;\\ 
$|a| = a_1 \oplus a_2$ & absolute value $\tmax^2\to \tmax$;\\  
$a^{\circ} = a\ominus a = (|a|, |a|)$&  balance operator $\tmax^2\to \tmax^2$.
\end{tabular}
\end{center}
The operator $\ominus$ satisfies all the properties of a minus sign except that
$a\ominus a$ is not zero except when $a=\zero$.
We also define the \new{balance relation} over 
$\tmax^2$ as follows:
\[  (a_1, a_2) \balance  (b_1, b_2) \Leftrightarrow a_1 \oplus b_2 = a_2 \oplus b_1\enspace .\]
It satisfies 
\begin{equation}
a \balance b \Leftrightarrow a \ominus b\balance \zero\enspace .\end{equation}
Balance relation is reflexive, symmetric, and compatible with
addition and multiplication of $\tmax^2$. However, it is not an equivalence relation, because it lacks the expected transitive property. For example (for $\vgroup=\R$),  we have $(1,2) \balance (3,3)$, $(3,3) \balance (1,1)$, but $(1,2)\notbalance(1,1)$. We then consider the following relation $\mathcal{R}$ on $\tmax^2$ which refines the balance relation: 
\[(a_1,a_2) \mathcal{R} (b_1,b_2) \Leftrightarrow
\begin{cases}
a_1 \oplus b_2 = a_2 \oplus b_1& \;\text{if}\; a_1 \neq a_2, \;b_1 \neq b_2,\\
(a_1,a_2)=(b_1,b_2)& \text{otherwise.}
\end{cases}
\]  

\begin{example}
To better understanding the difference between $\balance$ and $\rel$, in the following table we compare them for few examples (with $\vgroup=\R$).
\[\begin{array}{c|cccc}
&(1,4)&(4,1)&(4,4)&(3,3)\\
\hline
(1,4)&\balance,\rel&\notbalance,  \centernot\rel& \balance,\centernot\rel&\notbalance,  \centernot\rel\\
(4,1)&\notbalance,  \centernot\rel&\balance,\rel&\balance,\centernot\rel&\notbalance,  \centernot\rel\\
(4,4)&\balance,  \centernot\rel&\balance,  \centernot\rel&\balance, \rel&\balance,  \centernot\rel\\
(3,3)&\notbalance,  \centernot\rel&\notbalance,  \centernot\rel&\balance, \centernot\rel&\balance,  \rel
\end{array}\]
\end{example}

One can check that $\mathcal{R}$ has the transitive property and so is an equivalence relation on $\tmax^2$. Also it is compatible with $\oplus$ and $\odot$ of $\tmax^2$, $\balance$, $\ominus$, $|.|$ and $^{\circ}$ operators,
which then can be defined on the quotient $\tmax^2 / \mathcal{R}$.
\begin{definition}[$\smax$]\label{def:sym_def}
The \new{symmetrized tropical semiring} is the quotient semiring $(\tmax^2 / \mathcal{R},\oplus,\odot)$ and is denoted by $\smax$ or $\smax(\vgroup)$. 
We denote by $\zero:=\overline{(\botelt, \botelt)}$ the zero element 
and by $\unit:=\overline{(0, \botelt )}$ the unit element.
We also use the notation  $ab$ for  $a\odot b$ with  $a,b\in\smax$, 
and $a^n$ for the product $a\odot \cdot \odot a$ n-times.
\end{definition}\label{def:smax}
We distinguish three kinds of equivalence classes (\cite{gaubert1992theorie}):
\begin{center}
\begin{tabular}{ll}
$\overline{(c, \botelt)} = \{(c,a_2)\mid a_2<c\}, \; c\in \vgroup$ & positive elements  \\ 
$\overline{(\botelt,c)}=\{(a_1, c)\mid a_1<c\}, \; c\in \vgroup$ & negative elements  \\  
$\overline{(c,c)}=\{(c,c)\}, \; c\in \vgroup\cup\{\botelt\}$ & balance elements.
\end{tabular}
\end{center}
Then, we denote by $\smax^{\oplus}$,
$\smax^{\ominus}$ and  $\smax^{\circ}$ 
the set of positive or zero elements,
the set of negative or zero elements, and the set of balance elements,
respectively.
Therefore, we have:
\[\smax^{\oplus}\cup \smax^{\ominus}\cup \smax^{\circ}=\smax, \]
where the pairwise intersection of any two of these three sets
is reduced to $\{\zero\}$.

\begin{property}
The subsemiring $\smax^{\oplus} $ of $\smax$ can be
identified to $\tmax$, by the morphism $c\mapsto \overline{(c, \botelt)}$.
This allows one to write $a \ominus b$ instead of $\overline{(a, \botelt)} \oplus \overline{(\botelt,b)}$.
\end{property}


\begin{property}\label{prop-modulus}
Using the above identification, the absolute value map $a\in \smax \mapsto |a|\in \smax^\oplus$ is a morphism of semirings.
\end{property}


\begin{definition}[Signed tropical elements]\label{signed_elements}
The elements of $\smax^\vee:=\smax^{\oplus} \cup \smax^{\ominus}$ are called \new{signed tropical elements}, or simply \new{signed elements}. They are either positive, negative or zero.
\end{definition}


\begin{remark}
The elements of $\smax^{\circ}$ play the role of the usual zero element. 
Moreover, 
the set $\smax \setminus \smax^{\circ}=\smax^\vee\setminus\{\zero\}$ is the set of all invertible elements of $\smax$.
\end{remark}

\subsection{Relations over $\smax$}
\begin{definition}\label{partial_order}
We define the following relations, for $a,b \in \smax$:
\begin{enumerate}
\item $a \preceq b \iff b = a \oplus c \;\text{for some}\;c \in \smax \iff b=a\oplus b$ ;
\item $a \prec b \iff a \preceq b, \; a \neq b$ ;
\item $a \preceq^{\circ} b \iff b = a \oplus c \;\text{for some}\;c \in \smax^{\circ}$.
\end{enumerate}

\end{definition}

The relations $\preceq$ and $\preceq^\circ$ in \Cref{partial_order} are partial orders (they are reflexive, transitive and antisymmetric).

\begin{example} We have the following inequalities: 
\begin{enumerate}
\item
$\zero \preceq \ominus 2 \preceq \ominus 3,\;\zero \preceq 2 \preceq 3,\; 2 \preceq \ominus 3$ ;
\item $3$ and $\ominus 3$ are not comparable for $\preceq$ ;
\item  $1\preceq^{\circ} 2^{\circ}$,\;$\ominus 1\preceq^{\circ}  2^{\circ}$,\; $\ominus 2 \preceq^{\circ}  2^{\circ}$ ;
\item $3$ and $2^{\circ}$  are not comparable for $\preceq^{\circ}$. 
\end{enumerate}
\end{example}

\begin{property}\label{property-preceq}Let $a,b \in \smax$.
\begin{enumerate}
\item If $|a| \prec |b|$, then $a \oplus b = b$.
\item If $a \preceq b$, $|a|=|b|$ and $b \in \smax^{\vee}$, then $a=b$.
\item If $b \in \smax^{\vee}$, then $a \preceq^{\circ} b $ iff $a=b$.
\item If $|a| \preceq |b|$ and $b \in \smax^{\circ}$, then $a \preceq^{\circ} b $ and so $a \preceq b$.
\item  $a \oplus b =b \Rightarrow |a| \preceq |b|$.
\end{enumerate}
\end{property}

In \cite{tropicalization}, the authors equiped $\smax$ with other 
``order'' relations, by using a relation on $\tmax^2$ and then quotienting, and used them to define positive semidefinite matrices over $\smax$. We give the definition directly on $\smax$ in \Cref{partial_order2} below, while replacing the notations $\preceq$ and $\prec$ of
\cite{tropicalization} by the notations $\leqsign$ and $\lsign$,
since we already used the notation $\preceq$ for the natural order of  $\smax$.
\begin{definition}\label{partial_order2}\cite{tropicalization}\
For $a,b \in \smax$:
\begin{enumerate}
\item $a  \leqsign  b  \iff b \ominus a \in  \smax^{\oplus}\cup \smax^{\circ}$ ;
\item $a  \lsign  b  \iff b \ominus a \in  \smax^{\oplus}\setminus\{\zero\}$. 
\end{enumerate}
\end{definition}
\begin{example}
Using the relations in \Cref{partial_order2}, we have the following properties:
\begin{enumerate}
\item $\ominus 3 \lsign \ominus 2 \lsign \zero \lsign    2 \lsign  3$\enspace; 
\item $\leqsign$ is not antisymmetric on $\smax$: 
$2 \leqsign 3^{\circ}$ and $3^{\circ} \leqsign 2$\enspace;
\item $\leqsign$ is not transitive on $\smax$: 
$2 \leqsign 3^{\circ}, 3^{\circ} \leqsign 1$ but 
$2  \nleqsign  1$\enspace.
\end{enumerate}
\end{example}
The relation $\leqsign$ is reflexive, but it is not antisymmetric, nor transitive on $\smax$,  as shown in the examples above. 
However, on $\smax^{\vee}$,  $\leqsign$ is a total order 
and $\lsign$ coincides with ``$\leqsign$ and $\neq$'',
 see \Cref{order_new} and \Cref{order-exp} below.

\begin{proposition}\cite{tropicalization}\label{order_new}
Let $a, b , c \in \smax$.
\begin{enumerate}
\item $a \leqsign   a$ for any $a \in \smax$ ($\leqsign  $ is reflexive);
\item $a \leqsign   b$ and $b \leqsign   a$ if and only if $a \balance b$; hence $\leqsign  $ is antisymmetric on $\smax^{\vee}$;
\item If $a \leqsign   b$ and $b \leqsign   c$ and $b \in \smax^{\vee}$ then $a \leqsign   c$; hence $\leqsign  $ is transitive on $\smax^{\vee}$.
\end{enumerate}
\end{proposition}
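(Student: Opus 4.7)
My plan is to work throughout with representatives $(d_1,d_2) \in \tmax^2$ of elements $d \in \smax$, using the observation that, from the characterization of the three sign classes, $d \in \smax^\oplus \cup \smax^\circ$ if and only if every (equivalently, some) representative of $d$ satisfies $d_1 \geq d_2$.

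Part~(i) is immediate, since $a \ominus a = a^\circ$ has equal coordinates $(|a|,|a|)$, hence lies in $\smax^\circ \subseteq \smax^\oplus \cup \smax^\circ$. For part~(ii), the plan is to exploit the identity $a \ominus b = \ominus(b \ominus a)$: the conjunction $a \leqsign b$ and $b \leqsign a$ says that $b \ominus a$ lies simultaneously in $\smax^\oplus \cup \smax^\circ$ and in $\smax^\ominus \cup \smax^\circ$; since any two of the three sign classes intersect only in $\{\zero\}$, this forces $b \ominus a \in \smax^\circ$, which by (\ref{pro1}) is equivalent to $a \balance b$. The converse follows by reading the same equivalence backwards. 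For antisymmetry on $\smax^\vee$, it then suffices to check that balance of two signed elements reduces to equality, which I would verify by choosing canonical representatives with one coordinate equal to $\botelt$ and solving the balance equation $a_1 \oplus b_2 = a_2 \oplus b_1$ by cases on the signs of $a$ and $b$.

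For part~(iii), the three conditions translate to the tropical inequalities $b_1 \oplus a_2 \geq b_2 \oplus a_1$, $c_1 \oplus b_2 \geq c_2 \oplus b_1$, and (the target) $c_1 \oplus a_2 \geq c_2 \oplus a_1$. I would split on the sign of $b$. When $b \in \smax^\oplus \setminus \{\zero\}$, take the representative with $b_2 = \botelt$; the second inequality collapses to $c_1 \geq \max(c_2,b_1)$, so $c_1 \geq c_2$. Combining with the first inequality $\max(b_1,a_2) \geq a_1$ and a short split (either $b_1 \geq a_1$, giving $c_1 \geq b_1 \geq a_1$, or $a_2 \geq a_1$ directly), one concludes $\max(c_1,a_2) \geq \max(c_2,a_1)$. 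The case $b \in \smax^\ominus \setminus \{\zero\}$ is handled symmetrically by taking $b_1 = \botelt$, and $b = \zero$ is immediate from $c \ominus a = (c \ominus b) \oplus (b \ominus a)$ together with closure of $\smax^\oplus \cup \smax^\circ$ under $\oplus$.

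The main difficulty is understanding why the hypothesis $b \in \smax^\vee$ cannot be dropped. The global identity $(c \ominus b) \oplus (b \ominus a) = (c \ominus a) \oplus b^\circ$, together with closure of $\smax^\oplus \cup \smax^\circ$ under $\oplus$, yields only that $(c \ominus a) \oplus b^\circ$ lies in $\smax^\oplus \cup \smax^\circ$. When $|c \ominus a| \leq |b|$, the balance term $b^\circ$ absorbs $c \ominus a$ irrespective of its sign, so this coarse argument is inconclusive. The signedness of $b$ is precisely what allows one to pick a representative with a coordinate equal to $\botelt$, providing the coordinate-wise rigidity needed to bypass this obstruction.
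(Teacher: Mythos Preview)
Your proof is correct. The paper itself does not provide a proof of this proposition; it is cited from \cite{tropicalization} and stated without argument, so there is no ``paper's proof'' to compare against. Your approach via coordinate representatives in $\tmax^2$, together with the observation that membership in $\smax^\oplus \cup \smax^\circ$ is detected by the coordinate inequality $d_1 \geq d_2$, is the natural direct verification. The case split on the sign of $b$ in part~(iii) is handled cleanly, and your closing remark correctly isolates why the signedness hypothesis on $b$ is essential: without it one only obtains $(c \ominus a) \oplus b^\circ \in \smax^\oplus \cup \smax^\circ$, which is vacuous when $|b|$ dominates. One small note: for the antisymmetry clause on $\smax^\vee$ you could simply invoke \Cref{equality_balance}(ii) rather than reproving it by hand.
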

\begin{property}\label{order-exp}
If we identify the elements of  $\smax^\vee$ with elements of $\R$ by 
the map $\ominus a\mapsto -\exp(a)$, $\oplus a\mapsto \exp(a)$ and $\zero\mapsto 0$, then, we get that the relations $ \leqsign $ and $\lsign$ on $\smax^\vee$ are the usual order $\leq$  and the strict relation $<$ on $\R$.
Moreover, on $\smax^\oplus$, the relations $ \leqsign $ and $\lsign$
are equivalent to the relations $\preceq$ and $\prec$, and to 
the usual order and its strict version on the set $\tmax$.
\end{property}
We have also the following properties, which can be deduced easily from 
\Cref{partial_order2}.
\begin{lemma}\label{product_order}
Let $a, b, c\in \smax^{\vee}$. Then we have  
\begin{enumerate}
\item $a \leqsign  b, \;c \geqsign \zero \Rightarrow a c \leqsign  b c\enspace,$
\item $a \lsign  b, \;c \gsign \zero \Rightarrow a c \lsign  b c\enspace.$ \hfill \qed
\end{enumerate}
\end{lemma}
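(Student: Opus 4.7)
The plan is to unwind \Cref{partial_order2} and reduce both implications to closure properties of the subsets $\smax^{\oplus}$, $\smax^{\circ}$, and $\smax^{\oplus}\setminus\{\zero\}$ under multiplication by a positive element. By definition, $a\leqsign b$ means $b\ominus a\in\smax^{\oplus}\cup\smax^{\circ}$, and since $c\in\smax^{\vee}$ the hypothesis $c\geqsign\zero$ (i.e.\ $c\ominus\zero=c\in\smax^{\oplus}\cup\smax^{\circ}$) forces $c\in\smax^{\oplus}$. Similarly $a\lsign b$ and $c\gsign \zero$ translate to $b\ominus a\in\smax^{\oplus}\setminus\{\zero\}$ and $c\in\smax^{\oplus}\setminus\{\zero\}$.

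The next step is to observe that distributivity in $\smax$ gives
\[
bc\ominus ac \;=\; (b\ominus a)\,c,
\]
so the two claims reduce to showing that $(b\ominus a)c$ lies in $\smax^{\oplus}\cup\smax^{\circ}$ (resp.\ in $\smax^{\oplus}\setminus\{\zero\}$). This is purely a question about the behavior of multiplication relative to the decomposition $\smax=\smax^{\oplus}\cup\smax^{\ominus}\cup\smax^{\circ}$.

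For (i), split on whether $b\ominus a$ is positive or balance. If $b\ominus a\in\smax^{\oplus}$, then its product with $c\in\smax^{\oplus}$ stays in $\smax^{\oplus}$ because $\smax^{\oplus}$ is a subsemiring of $\smax$, isomorphic to $\tmax$ (this is exactly the identification recalled right after \Cref{def:smax}). If instead $b\ominus a\in\smax^{\circ}$, write it as $d^{\circ}=d\ominus d$ with $d\in\smax^{\oplus}$; then $d^{\circ}c = (dc)\ominus (dc)=(dc)^{\circ}\in\smax^{\circ}$. In both cases we land in $\smax^{\oplus}\cup\smax^{\circ}$, proving $ac\leqsign bc$.

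For (ii), the stronger hypotheses rule out the balance case: both $b\ominus a$ and $c$ lie in the set $\smax^{\oplus}\setminus\{\zero\}$, which is closed under multiplication since it corresponds to the group $\vgroup$ under the identification $\smax^{\oplus}\setminus\{\zero\}\simeq\tmax\setminus\{\botelt\}\simeq\vgroup$. Hence $(b\ominus a)c\in\smax^{\oplus}\setminus\{\zero\}$, i.e.\ $ac\lsign bc$. No step is a real obstacle; the only point requiring care is the balance case in (i), which is handled by the one-line computation $d^{\circ}c=(dc)^{\circ}$.
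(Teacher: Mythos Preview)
Your argument is correct and is exactly the direct verification from \Cref{partial_order2} that the paper has in mind; the paper itself gives no detailed proof and simply marks the lemma with \qed after noting it ``can be deduced easily from \Cref{partial_order2}.'' Your unwinding of $bc\ominus ac=(b\ominus a)c$ together with the closure of $\smax^{\oplus}$, $\smax^{\circ}$, and $\smax^{\oplus}\setminus\{\zero\}$ under multiplication by an element of $\smax^{\oplus}$ is precisely that easy deduction.
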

\begin{lemma}\label{modulus_order}
Let $a, b\in \smax^{\vee}$. Then $a^{ 2} \lsign b^{ 2}$ if and only if $|a| \lsign |b|$. Similarly, $a^{ 2} \leqsign b^{ 2}$ if and only if $|a| \leqsign |b|$. 
\end{lemma}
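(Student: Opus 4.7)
\medskip

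\noindent\textbf{Proof plan.} The key observation is that for any $a\in\smax^{\vee}$ the square $a^{2}$ lies in $\smax^{\oplus}$, and moreover $|a^{2}|=|a|^{2}$. The equality $|a^{2}|=|a|^{2}$ is immediate from \Cref{prop-modulus}, which states that $|\cdot|\colon \smax\to\smax^{\oplus}$ is a semiring morphism. That $a^{2}\in\smax^{\oplus}$ follows by a short case analysis: if $a$ is positive, then $a^{2}\in\smax^{\oplus}$ because $\smax^{\oplus}$ is a subsemiring of $\smax$; if $a$ is negative, one writes $a=\ominus|a|$ and checks from the definition of $\odot$ on a representative that $(\ominus|a|)\odot(\ominus|a|)=|a|^{2}\in\smax^{\oplus}$; if $a=\zero$, there is nothing to prove.

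The second step is to transfer the two equivalences through this squaring map. Since $a^{2},b^{2}\in\smax^{\oplus}$, we may apply \Cref{order-exp}: on $\smax^{\oplus}$ the relations $\lsign$ and $\leqsign$ coincide with the strict and non-strict tropical orders inherited from $\tmax$ via the identification $\smax^{\oplus}\cong\tmax$. Therefore $a^{2}\lsign b^{2}$ is equivalent to $|a^{2}|\lsign |b^{2}|$ in $\smax^{\oplus}$, and by the first step this rewrites as $|a|^{2}\lsign |b|^{2}$, that is $2|a|<2|b|$ in the additively written group $\vgroup\cup\{\botelt\}$ (with the convention $\botelt+\botelt=\botelt$, $\botelt<g$ for $g\in\vgroup$).

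The final step is to note that since $\vgroup$ is totally ordered, the doubling map $x\mapsto 2x$ is strictly order-preserving on $\vgroup$, and this property extends unambiguously to $\vgroup\cup\{\botelt\}$. Hence $2|a|<2|b|$ if and only if $|a|<|b|$, and applying \Cref{order-exp} once more to $|a|,|b|\in\smax^{\oplus}$ gives $|a|\lsign|b|$. The $\leqsign$-statement follows by the same chain of equivalences with every strict inequality replaced by its non-strict counterpart. There is no real obstacle here: the hypothesis $a,b\in\smax^{\vee}$ rules out balance elements, which is exactly what is needed so that all the intermediate objects remain in $\smax^{\vee}$ (in fact in $\smax^{\oplus}$) and the order relations behave as in classical tropical algebra.
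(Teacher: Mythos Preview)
Your proof is correct and follows essentially the same route as the paper's: both arguments first observe that $a^{2}=|a|^{2}\in\smax^{\oplus}$ for $a\in\smax^{\vee}$ (the paper does the case split $a=|a|$ or $a=\ominus|a|$, you invoke \Cref{prop-modulus} plus the same case split), then reduce the question to comparing squares in $\smax^{\oplus}\cong\tmax$ via \Cref{order-exp}. Your version is somewhat more explicit about the final step---that doubling is strictly order-preserving in a totally ordered abelian group (extended by $\botelt$)---which the paper leaves implicit, but this is a matter of exposition rather than a different argument.
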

\begin{proof}
Any $a\in \smax^{\vee}$ can be written as $a=|a|$ or $a=\ominus |a|$,
using the above identifications.
So $a^{ 2}=|a|^{ 2}$, where $|a|\in \smax^\oplus$.
Then, we only need to check the equivalences of the lemma 
for $a,b\in \smax^\oplus$. Since in $\smax^\oplus$,
$\lsign$ and $\leqsign$ are equivalent to $\prec$ and $\preceq$, respectively,
or to the usual order and its strict version on $\tmax$, we obtain the
result of the lemma.
\end{proof}





\begin{property} \label{equality_balance}
The relation $\balance$ satisfies the following properties, for $a,b \in \smax$:
\begin{enumerate}
\item\label{pro1} We have $a \balance b \Leftrightarrow a \ominus b\balance \zero$.
\item If $a,b \in \smax^{\vee}$ and $a \balance b$, then we have $a=b$.
\item If $b \in \smax^{\vee}$, $a \balance b$ and $a\preceq b$, 
then we have  $a=b$.
\end{enumerate}
\end{property}

\section{Preliminaries on matrices and polynomials over $\smax$}\label{sec-matpol}
\subsection{Matrices} 
Given any semiring $(\mathcal{S},\oplus,\zero,\odot,\unit)$ (such as $\rmax$, $\tmax$  or $\smax$), we denote by $\mathcal{S}^{n}$ and $\mathcal{S}^{n\times m}$ the sets of $n$-dimensional vectors and of $n\times m$ matrices with entries in $\mathcal{S}$. We also use the notation $ab$ for  $a\odot b$ with  $a,b\in \mathcal{S}$, and $a^n$ for the product $a\odot \cdot \odot a$ n-times. Then, the finite sum $\tsum$ and product $\prod$ notations, and the matrix multiplication, addition and power operations over $\mathcal{S}$ are defined as in usual linear algebra.
For example if $A=(a_{ij}) \in \mathcal{S}^{n\times m}$ and $B=(b_{ij}) \in \mathcal{S}^{m\times p}$,
then $A B\in \mathcal{S}^{n\times p}$  and has entries $(A  B)_{ij}=\tsum_k a_{ik} b_{kj}$. Also, for any $n\geq 1$, we denote by $\zero$, and call the zero vector, the $n$-dimensional vector with all entries equal to $\zero$, and by $I$, the $n\times n$ identity matrix over $\mathcal{S}$ with diagonal entries equal to $\unit$ and off-diagonal entries equal to $\zero$. Finally, for a square $n\times n$ matrix $A$, we denote $A^{ 2}=A A$, etc, with $ A^{ 0}$ equal to the identity matrix $I$.

For any positive integer $n$, denote by $[n]$ the set $\{1, \ldots, n\}$. 
We denote by  $\Sp_{n}$, the set of all permutations of $[n]$.
Recall that a \new{cycle} in $[n]$ is a sequence  
$\cycle=(i_{1},i_{2},\ldots , i_{k})$ of different elements of $[n]$,
with the convention that $i_{k+1}=i_1$, and that any
permutation $\permutation$ of $[n]$ can be decomposed uniquely
into disjoint cycles which cover $[n]$,
meaning that $\permutation(i_\ell)= i_{\ell+1}$ for all $\ell\in [k]$ and all
cycles $\cycle=(i_{1},i_{2},\ldots , i_{k})$ of $\permutation$.

Let $A =(a_{ij}) \in \mathcal{S}^{n \times n}$ be a matrix.
For any permutation $\permutation$ of $[n]$, the weight of $\permutation$ associated to $A$
is given by
\[ w(\permutation)=\bigtprod_{i \in[n]}a_{i\permutation(i)}\enspace ,\]
and the weight of any cycle $\cycle=(i_{1},i_{2},\ldots , i_{k})$ associated to $A$ is given by
\[w(\cycle)=\bigtprod_{\ell\in [k]} a_{i_\ell i_{\ell+1}}\enspace .\]
Then, as in usual algebra, the weight of a permutation  is the product of the weights of
its cycles.



\begin{definition} \label{per}The \new{permanent} of a matrix $A=(a_{ij}) \in \mathcal{S}^{n \times n}$  is
\[\per(A)= \bigtsum_{\permutation \in \Sp_{n}} \bigtprod_{i \in[n]}a_{i\permutation(i)}
=\bigtsum_{\permutation \in \Sp_{n}} w(\permutation)
\enspace .
\]
\end{definition}
When the semiring $\mathcal{S}$ has a negation map, we can also define the determinant. We only give the definition in $\smax$.
\begin{definition}[Determinant]\label{det_s}
Let $A=(a_{ij})$ be an  $n \times n$ matrix over $\smax$. The \new{determinant} is 
\[\det(A):= \bigtsum_{\permutation \in \Sp_n} \mathrm{sgn}(\permutation) \bigtprod_{i\in [n]} a_{i\permutation(i)} = \bigtsum_{\permutation \in \Sp_n} \mathrm{sgn}(\permutation) w(\permutation)
\enspace ,\]
where \[\mathrm{sgn}(\permutation)=\begin{cases}
\unit & \;\text{if}\;\permutation \;\text{is even};\\
\ominus \unit & \text{otherwise}.
\end{cases}\]
\end{definition}
This allows one to define also the adjugate matrix.
\begin{definition}[Adjugate]\label{def-adjugate}
The adjugate matrix of $A=(a_{ij}) \in \smax^{n \times n}$ is the matrix $A^{\mathrm{adj}}\in \smax^{n\times n}$ with entries:
\[ (A^{\mathrm{adj}})_{i,j} := (\ominus 1)^{i+j} \det(A[\hat{j},\hat{i}])\enspace , \]
where $A[\hat{j},\hat{i}]$ is the matrix obtained after eliminating the $j$-th row and the $i$-th column of $A$.
\end{definition}

For any matrix $A$ with entries in $\smax$, we denote by $|A|$ the
matrix with entries in $\tmax$ obtained by applying the modulus map
$|\cdot|$  entrywise.
\begin{remark}\label{perdet}
For $A \in (\smax)^{n \times n}$, we have
$|\det(A)|=\per(|A|)$.
\end{remark}

\begin{lemma}[\protect{\cite{akian2009linear}}]\label{adj} 
Let $A \in (\smax^\vee)^{n \times n}$. Then the following balance relation holds
\[A A^{\mathrm{adj}} \succeq^{\circ}  \det(A) I .\]
In particular if $\det(A) \balance \zero$ then $A A^{\mathrm{adj}} \balance \zero$.
\end{lemma}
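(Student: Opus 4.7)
The plan is to compute $(A A^{\mathrm{adj}})_{ij}$ directly from the definitions and recognize it, via Laplace expansion, as the determinant of an auxiliary matrix.

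Expanding yields
\[
(A A^{\mathrm{adj}})_{ij} = \bigtsum_{k} a_{ik}(\ominus \unit)^{k+j}\det(A[\hat{j},\hat{k}])\enspace .
\]
Let $\tilde{A}$ denote the matrix obtained from $A$ by replacing its $j$-th row with its $i$-th row. Grouping the permutations $\permutation \in \Sp_n$ in $\det(\tilde{A})$ according to the value $k=\permutation(j)$ is a pure combinatorial reindexing of the permutation sum, and so holds as an actual equality in $\smax$; it produces exactly the sum above (using $\tilde{a}_{jk}=a_{ik}$ and $\tilde{A}[\hat{j},\hat{k}]=A[\hat{j},\hat{k}]$), giving $(A A^{\mathrm{adj}})_{ij} = \det(\tilde{A})$. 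When $i = j$ this matrix is just $A$, so $(A A^{\mathrm{adj}})_{ii} = \det(A)$.

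When $i \neq j$, the matrix $\tilde{A}$ has two identical rows, and the key step, and main technical point, is to show that $\det(\tilde{A}) \in \smax^{\circ}$. I would pair each $\permutation \in \Sp_n$ with $\permutation' := \permutation \circ (i,j)$, a fixed-point-free involution on $\Sp_n$. Since rows $i$ and $j$ of $\tilde{A}$ coincide, the weights $w(\permutation)$ and $w(\permutation')$ associated to $\tilde{A}$ agree (both equal $a_{i\permutation(i)}\,a_{i\permutation(j)}\bigtprod_{k \neq i,j} a_{k\permutation(k)}$), while $\mathrm{sgn}(\permutation') = \ominus\,\mathrm{sgn}(\permutation)$. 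Each pair contributes $\mathrm{sgn}(\permutation)\bigl(w(\permutation) \ominus w(\permutation)\bigr) = \mathrm{sgn}(\permutation)\,w(\permutation)^{\circ}$, which lies in $\smax^{\circ}$. Since $\smax^{\circ}$ is stable under $\oplus$ and under multiplication by arbitrary elements, $\det(\tilde{A})$ lies in $\smax^{\circ}$.

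Combining the two cases, the matrix $C$ defined by $C_{ii}:=\zero$ and $C_{ij}:=(AA^{\mathrm{adj}})_{ij}$ for $i\neq j$ has all entries in $\smax^{\circ}$ and satisfies $A A^{\mathrm{adj}} = \det(A)\,I \oplus C$, which is precisely $A A^{\mathrm{adj}} \succeq^{\circ} \det(A)\,I$. For the second assertion, assuming $\det(A) \balance \zero$ means $\det(A) \in \smax^{\circ}$, so every entry of $\det(A)\,I$ lies in $\smax^{\circ}$; adding the matrix $C$ keeps every entry in $\smax^{\circ}$, which is equivalent to $A A^{\mathrm{adj}} \balance \zero$. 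The only nontrivial ingredient is the sign-pairing argument in the two-equal-rows case; everything else is bookkeeping.
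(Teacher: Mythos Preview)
The paper does not give its own proof of this lemma; it is cited from \cite{akian2009linear} without argument. Your proof is correct and is essentially the standard one: the classical Laplace-expansion argument, carried out carefully in $\smax$ so that each step is either a pure reindexing of a finite sum (hence an honest equality) or produces a balanced element via the sign-pairing involution. Note in passing that your argument never uses the hypothesis $A\in(\smax^\vee)^{n\times n}$ and goes through verbatim for arbitrary $A\in\smax^{n\times n}$.
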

We now recall some results about the solution of linear systems over $\smax$.

\begin{theorem}[\cite{maxplus90b,cramer-guterman}]\label{cramer}
Let $A \in (\smax)^{n \times n}$ and $b \in (\smax)^{n}$, then
\begin{itemize}
\item every solution $x \in (\smax^{\vee})^{n}$ of the linear system $A x \balance b$ satisfies the relation 
\begin{equation}\label{cram}\det(A) x \balance A^{\adj}  b\enspace.
\end{equation}
\item If $A^{\adj} b \in (\smax^{\vee})^{n}$ and $\det(A)$ is invertible, then 
\[\tilde{x} = \det(A)^{ -1} A^{\adj}  b\]
is the unique solution of $A x \balance b$ in $(\smax^{\vee})^{n}$.
\end{itemize}
\end{theorem}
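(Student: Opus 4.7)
The plan is to multiply the balance equation $Ax \balance b$ on the left by the adjugate $A^{\adj}$ and invoke Lemma~\ref{adj}. Since the balance relation $\balance$ is compatible with multiplication by matrices over $\smax$ (as $(\smax^\circ)^n$ is stable under left-multiplication by any matrix in $\smax^{n\times n}$), we obtain
\[A^{\adj} A x \balance A^{\adj} b\enspace.\]
By the analogue of Lemma~\ref{adj} for the product $A^{\adj} A$ (proved by the same Laplace-expansion argument, using that $(A^{\adj} A)_{ii} = \det(A)$ while for $i \neq j$ the entry $(A^{\adj} A)_{ij}$ is the ``determinant'' of a matrix with two equal columns and thus lies in $\smax^\circ$), one can write $A^{\adj} A = \det(A) I \oplus M$ with $M \in (\smax^\circ)^{n\times n}$. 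Since $x$ is signed and $M$ is balance-valued, $Mx \in (\smax^\circ)^n$, and substituting yields $\det(A) x \oplus M x \balance A^{\adj} b$. Absorbing the balance term $Mx$ into the balance relation (using the signed structure of $x$ together with the ideal structure of $\smax^\circ$) delivers~\eqref{cram}.

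For the second part I first check that $\tilde x$ is a solution. By Lemma~\ref{adj} applied to $AA^{\adj}$, we can write $AA^{\adj} = \det(A) I \oplus M'$ with $M' \in (\smax^\circ)^{n\times n}$, so
\[A \tilde x = \det(A)^{-1}(A A^{\adj}) b = \det(A)^{-1}(\det(A) I \oplus M')\, b = b \oplus \det(A)^{-1} M' b\enspace.\]
Since $\det(A)$ is invertible, $\det(A)^{-1}$ is signed; combined with $M' b \in (\smax^\circ)^n$, the vector $\det(A)^{-1} M' b$ is entrywise balance, hence $A \tilde x \balance b$. Moreover $\tilde x \in (\smax^\vee)^n$, because the signed inverse $\det(A)^{-1}$ multiplied by the signed vector $A^{\adj} b$ is again signed.

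For uniqueness, let $x \in (\smax^\vee)^n$ be any solution. By the first part we have $\det(A) x \balance A^{\adj} b$; since $\det(A)$ is signed and $x$ is signed, the left-hand side is signed, and the right-hand side is signed by hypothesis. Property~\ref{equality_balance}(ii) then upgrades the balance between two signed vectors to an equality $\det(A) x = A^{\adj} b$, and multiplying by $\det(A)^{-1}$ gives $x = \tilde x$.

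The main obstacle is the absorption step in the first part. The non-transitivity of $\balance$ (explicitly illustrated in the excerpt) means that we cannot simply chain the balance relations $A^{\adj} A x \balance \det(A) x$ and $A^{\adj} A x \balance A^{\adj} b$ to conclude~\eqref{cram}; rather, one must argue entrywise, exploiting that the unwanted term $Mx$ has balance entries together with the signed nature of $x$, so that no ``signed leftover'' can appear on the left side of the final balance. The second part is then a direct consequence, the only delicate point being the verification that the hypotheses ($\det(A)$ invertible and $A^{\adj} b$ signed) force all quantities in the Cramer identity to be signed, so that balance collapses to equality via Property~\ref{equality_balance}.
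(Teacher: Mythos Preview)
The paper does not prove this theorem: it is quoted from \cite{maxplus90b,cramer-guterman} without proof, so there is no in-paper argument to compare your attempt against. Your overall strategy (left-multiply by $A^{\adj}$, use $A^{\adj}A\succeq^{\circ}\det(A)I$, then invoke Property~\ref{equality_balance} for uniqueness) is indeed the standard route taken in those references, and your treatment of the second item is correct once the first is established.

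The gap is in the first item, precisely at the absorption step you yourself flag. From $A^{\adj}Ax\balance A^{\adj}b$ and $A^{\adj}A=\det(A)I\oplus M$ with $M\in(\smax^{\circ})^{n\times n}$ you obtain, entrywise, $w_i\oplus v_i\in\smax^{\circ}$ with $w_i=\det(A)x_i\ominus(A^{\adj}b)_i$ and $v_i=(Mx)_i\in\smax^{\circ}$. You then claim $w_i\in\smax^{\circ}$. But the implication ``$w\oplus v\in\smax^{\circ}$ and $v\in\smax^{\circ}\Rightarrow w\in\smax^{\circ}$'' is \emph{false} in $\smax$: take $w=3$ and $v=5^{\circ}$, so $w\oplus v=5^{\circ}\in\smax^{\circ}$ while $w=3\notin\smax^{\circ}$. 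Your appeal to ``the signed structure of $x$'' does not rescue this: signedness of $x$ only guarantees $v_i\in\smax^{\circ}$, which you already have, and places no bound on $|v_i|$ relative to $|w_i|$. Nothing in the hypotheses of the first item forces $\det(A)x_i$ or $(A^{\adj}b)_i$ to be signed, so you cannot upgrade the balance relations via Property~\ref{equality_balance} either.

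What the cited proofs actually do is avoid chaining two balance relations through the possibly unsigned pivot $(A^{\adj}Ax)_i$. One clean way (used in \cite{cramer-guterman}) is to exhibit a single polynomial identity over commutative rings whose transfer to $\smax$ directly yields $\det(A)x_i\ominus(A^{\adj}b)_i\in\smax^{\circ}$, rather than obtaining it as a consequence of two separate balances; another (in \cite{maxplus90b}) proceeds by an elimination argument on the augmented system. Either way, the missing ingredient is a device that bypasses the non-transitivity of $\balance$, and your sketch does not supply one.
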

\begin{remark}\label{ith_cramer}
Let $D_{x_i}$, be the determinant of the matrix obtained by replacing the $i$-th column of $A$ with $b$. Then $(A^{\adj}b)_i=D_{x_i}$. When $\det(A)$ is invertible, \Cref{cram} is equivalent to 
$(\forall i) \;x_i \balance \det(A)^{-1}D_{x_i}$,
where the right hand side of this equation is exactly the classical $i$-th Cramer formula.
\end{remark}

\begin{theorem}[\cite{maxplus90b,cramer-guterman}]\label{existence_signed}
Let $A \in (\smax)^{n \times n}$. Assume that $\det(A)\neq \zero$ (but possibly $\det(A) \balance \zero$). Then for every $b \in (\smax)^{n}$ there exists a solution $x \in (\smax^{\vee})^n$ of $A x \balance b$, which can be chosen in such a way that $|x|=|\det(A)|^{ -1} |A^{\adj} b|$.
\end{theorem}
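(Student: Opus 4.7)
The strategy is to build $x \in (\smax^\vee)^n$ componentwise from the Cramer-like balance of Theorem~\ref{cramer}, and then to verify that this $x$ satisfies $A x \balance b$ using the adjugate identity of Lemma~\ref{adj}.

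\textbf{Step 1 (componentwise construction).} I would first record the following elementary fact: given $u \in \smax \setminus \{\zero\}$ and $v \in \smax$, there exists $y \in \smax^\vee$ with $|y| = |u|^{-1}|v|$ such that $u\, y \balance v$. A short case analysis based on whether $u$ and $v$ lie in $\smax^\vee$ or in $\smax^\circ$ suffices: if both are signed, take $y = u^{-1} v$; in every other case, any signed $y$ with the prescribed modulus works, since $u\, y \ominus v$ is a sum of at most two elements of modulus $|v|$, at least one of which lies in $\smax^\circ$ (using Property~\ref{prop-modulus} together with the computation of $\oplus$ on representatives in $\tmax^2$). Applying this fact to $u = \det(A)$ and $v = (A^{\adj} b)_i$ yields, for each $i$, a signed $x_i$ with $|x_i| = |\det(A)|^{-1}|(A^{\adj} b)_i|$ and $\det(A)\, x_i \balance (A^{\adj} b)_i$.

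\textbf{Step 2 (verification when $\det(A) \in \smax^\vee$).} By Lemma~\ref{adj}, one has $A A^{\adj} = \det(A)\, I \oplus C$ with $C \in (\smax^\circ)^{n\times n}$, hence $A A^{\adj} b = \det(A)\, b \oplus C b$ with $C b \in (\smax^\circ)^n$. Multiplying the componentwise balance $\det(A)\, x \balance A^{\adj} b$ from Step~1 by $A$ on the left (which preserves $\balance$) gives $\det(A)\, A x \balance A A^{\adj} b$. Unfolding the balance together with the decomposition of $A A^{\adj} b$ then yields $\det(A)(A x \ominus b) \in (\smax^\circ)^n$. When $\det(A) \in \smax^\vee \setminus \{\zero\}$, the product of a nonzero signed element by an element outside $\smax^\circ$ is outside $\smax^\circ$, so one may cancel the factor $\det(A)$ and conclude $A x \balance b$.

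\textbf{Step 3 (main obstacle: $\det(A) \in \smax^\circ \setminus \{\zero\}$).} The main difficulty is when $\det(A)$ is balance but nonzero, since then multiplication by $\det(A)$ sends every element of $\smax$ into $\smax^\circ$ and the cancellation of Step~2 breaks down. I would handle this case by a lifting argument. Using the surjection of the signed valuation $\sval$ from a nonarchimedean real closed valued field $(\vfield, \vall)$ onto $\smax^\vee$ alluded to in the introduction, pick lifts $\hat A \in \vfield^{n\times n}$ and $\hat b \in \vfield^n$ of $A, b$ whose signed valuations recover $A, b$ entrywise, with leading coefficients on balance entries chosen generically so that the top-order terms of $\det(\hat A)$ do not accidentally cancel. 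Then $\det(\hat A) \neq 0$ in $\vfield$ with $\vall(\det(\hat A)) = |\det(A)|$, and the classical Cramer formula furnishes $\hat x := \hat A^{-1} \hat b$ satisfying $\hat A \hat x = \hat b$ together with $\vall(\hat x_i) = |\det(A)|^{-1}|(A^{\adj} b)_i|$. Setting $x := \sval(\hat x) \in (\smax^\vee)^n$ and applying $\sval$ to the field identity $\hat A \hat x = \hat b$ (which is a morphism into the signed tropical hyperfield up to balance) yields $A x \balance b$ with the prescribed modulus.
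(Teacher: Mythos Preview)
The paper does not prove this theorem; it is quoted from \cite{maxplus90b,cramer-guterman} and used as a black box. So there is no ``paper's proof'' to match, and your attempt must stand on its own. It does not.

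\textbf{The gap is in Step~2.} From $\det(A)\,x \balance A^{\adj}b$ you correctly deduce $\det(A)\,Ax \balance A\,A^{\adj}b$, and from Lemma~\ref{adj} you have $A\,A^{\adj}b=\det(A)\,b\oplus Cb$ with $Cb\in(\smax^\circ)^n$. But from
\[
\det(A)\,Ax \;\ominus\; \det(A)\,b \;\ominus\; Cb \;\in\;(\smax^\circ)^n
\]
you \emph{cannot} drop the balanced term $Cb$ and conclude $\det(A)(Ax\ominus b)\in(\smax^\circ)^n$: if $|Cb|$ dominates, subtracting a balanced element may destroy balancedness (e.g.\ $1\oplus 2^\circ=2^\circ\in\smax^\circ$ while $1\notin\smax^\circ$). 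This is exactly the non-transitivity of $\balance$ the paper warns about. More conceptually, Theorem~\ref{cramer} only says the Cramer balance is a \emph{necessary} condition on solutions; your Step~1 produces an $x$ satisfying that necessary condition, with residual sign freedom on every coordinate $i$ where $(A^{\adj}b)_i\in\smax^\circ$, and not every such choice solves $Ax\balance b$. So Step~2 is attempting to prove something that is false for arbitrary Step~1 outputs.

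\textbf{Step~3 is closer to a valid route but is incomplete.} The lifting idea is sound and is one way the literature proves such statements, but: (i) it must cover \emph{all} cases, since Step~2 does not settle the invertible-$\det(A)$ case; (ii) the theorem allows $A\in(\smax)^{n\times n}$ and $b\in(\smax)^n$ with balanced entries, whereas $\sval$ only surjects onto $\smax^\vee$, so you must first explain how a balanced entry $c^\circ$ is to be lifted and why the resulting $\sval(\hat A\hat x)\balance \sval(\hat b)$ still implies $Ax\balance b$ for the original $A,b$; (iii) the ``generic'' no-cancellation claim has to be made precise simultaneously for $\det(\hat A)$ and for every coordinate of $\hat A^{\adj}\hat b$, so that $\vall(\hat x_i)=|\det(A)|^{-1}|(A^{\adj}b)_i|$ actually holds. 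None of this is hard, but as written it is a sketch, not a proof. The arguments in \cite{cramer-guterman} proceed instead by a direct combinatorial construction (optimal assignments and a sign-selection argument), which avoids the lifting machinery entirely.
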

\begin{theorem}[Homogeneous systems over $\smax$ \protect{\cite[Th. 6.5]{maxplus90b}, see also \cite[Th. 6.1]{cramer-guterman}}]\label{homo}
Let $A \in (\smax)^{n \times n}$, then there exists a solution $x \in (\smax^{\vee})^{n}\setminus\{\zero\}$ to the linear system $A x \balance \zero$ if and only if $\det(A)\balance \zero$. 
\end{theorem}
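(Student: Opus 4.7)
The statement is an ``if and only if'' which I would handle in two separate directions, using Cramer's theorem (\Cref{cramer}) for the forward implication and a combination of the existence theorem (\Cref{existence_signed}), the adjugate identity (\Cref{adj}), and the Frobenius--K\"onig theorem for the converse.

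For the ``only if'' direction, the plan is to apply \Cref{cramer} with $b = \zero$: given $x \in (\smax^\vee)^n \setminus \{\zero\}$ solving $Ax \balance \zero$, Cramer's identity immediately yields $\det(A)\, x \balance \zero$, that is, $\det(A)\, x_i \in \smax^\circ$ for every $i$. Choosing an index $i$ with $x_i \neq \zero$, the element $x_i \in \smax^\vee \setminus \{\zero\}$ is invertible in $\smax$; since $\smax^\circ$ is stable under multiplication by elements of $\smax^\vee$, multiplying by $x_i^{-1}$ gives $\det(A) \in \smax^\circ$, i.e., $\det(A) \balance \zero$.

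For the ``if'' direction, assume $\det(A) \balance \zero$ and distinguish two sub-cases. If $\det(A) \neq \zero$, I apply \Cref{existence_signed} (whose hypothesis is met) to the system $Ax \balance \det(A)\, e_j$, choosing the index $j$ so that the $j$-th column of $A^{\adj}$ is non-zero; such a $j$ exists, for otherwise the Laplace expansion would force $\det(A) = \zero$, contradicting the sub-case assumption. The theorem then yields a signed solution $x$ with $|x_i| = |\det(A)|^{-1}|A^{\adj}_{ij}|$, non-zero by the choice of $j$. The balance $Ax \balance \det(A)\, e_j$, combined with $\det(A)\, e_j \in (\smax^\circ)^n$ and the closure of $\smax^\circ$ under $\oplus$, then produces $Ax \balance \zero$. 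If instead $\det(A) = \zero$, then by \Cref{perdet} we have $\per(|A|) = \zero$, and the Frobenius--K\"onig theorem applied to the bipartite graph of non-zero entries of $A$ delivers subsets $R, C \subseteq [n]$ with $|R| + |C| \geq n + 1$ and $a_{ij} = \zero$ for all $(i,j) \in R \times C$. Seeking solutions supported on $C$ makes the rows indexed by $R$ trivially balanced, and reduces the problem to a rectangular subsystem of $n - |R|$ equations in $|C| \geq n - |R| + 1$ unknowns, which is treated by induction on $n$ after augmenting with zero rows into a square system whose determinant equals $\zero$.

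The main obstacle is the sub-case $\det(A) = \zero$: the inductive step requires a strict dimension drop, i.e.\ $|C| < n$, which fails precisely when $A$ has a full zero row ($|R| = 1$, $|C| = n$). This boundary situation needs a separate treatment, for instance by strengthening the inductive statement to include rectangular homogeneous systems with strictly more unknowns than equations, or via a direct signed-cofactor construction that exploits the fact that a determinant with two equal rows necessarily balances $\zero$ (since the row swap leaves the matrix unchanged but flips the sign of the determinant, forcing it to lie in $\smax^\circ$).
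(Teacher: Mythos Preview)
The paper does not supply its own proof of this theorem; it is quoted from \cite{maxplus90b} and \cite{cramer-guterman} as background. So there is no in-paper argument to compare against, and I can only assess your proposal on its own merits.

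Your ``only if'' direction is sound: \Cref{cramer} with $b=\zero$ gives $\det(A)\,x\balance\zero$, and since some coordinate $x_i\in\smax^\vee\setminus\{\zero\}$ is invertible and $\balance$ is compatible with multiplication, multiplying $\det(A)\,x_i\balance\zero$ by $x_i^{-1}$ yields $\det(A)\balance\zero$.

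The ``if'' direction, however, has a genuine gap in the sub-case $\det(A)\neq\zero$. From \Cref{existence_signed} you obtain $x\in(\smax^\vee)^n\setminus\{\zero\}$ with $Ax\balance\det(A)\,e_j$, and you then claim $Ax\balance\zero$ because $\det(A)\,e_j\in(\smax^\circ)^n$. This step implicitly uses transitivity of $\balance$ through a balanced element, and that fails in $\smax$: for instance $5\balance 5^\circ$ and $5^\circ\balance\zero$, yet $5\not\balance\zero$. Concretely, \Cref{existence_signed} only guarantees $(Ax)_j\ominus\det(A)\in\smax^\circ$, which does \emph{not} force $(Ax)_j\in\smax^\circ$ when $\det(A)$ itself is balanced. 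The proofs in the cited references do not proceed this way; they construct the signed solution directly, essentially by starting from a column of $A^{\adj}$ (which satisfies $A\,A^{\adj}_{:,j}\balance\zero$ by \Cref{adj}) and then choosing signs for its balanced entries via a combinatorial argument, rather than by invoking a black-box existence result. You should also verify that \Cref{existence_signed} is established independently of \Cref{homo} in the original sources, so that your reduction is not circular.

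The sub-case $\det(A)=\zero$ is, as you yourself note, incomplete: the inductive step requires $|C|<n$, and the boundary situation of a full zero row still needs the separate rectangular argument that you have only sketched.
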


We shall also use the following construction.
The semirings $\rmax$, $\tmax$, and  $\smax$ are all topological semirings
(meaning that operations are compatible with the topology), when 
endowed with the topology of the order $\leq$ for $\tmax$ and $\preceq$ for 
$\smax$.  They are also idempotent meaning that $a\oplus a=a$ for all $a$,
so that the sum of elements is also the supremum.
They are also relatively complete for their associated partial order, meaning that the 
supremum of an upper bounded set always exists, or that they become complete
when adding a top element to them.
In what follows, $\mathcal{S}$ will be $\rmax$, $\tmax$, and  $\smax$,
but it can be any idempotent semiring which is relatively complete for the associated partial order (such that $a\leq b$ if $a\oplus b=b$).
  \begin{definition}(Kleene's star)\label{star_smax}
The Kleene's star of a matrix $A \in \mathcal{S}^{n \times n}$, denoted $A^*$, is defined as the sum $\tsum_{k\geq 0}A^{ k}$, if the series converges to a matrix over $\mathcal{S}$. Recall that $ A^{ 0}=I$ the identity matrix.
\end{definition}

To any matrix $A =(a_{ij}) \in \mathcal{S}^{n \times n}$,  we associate the weighted directed graph $\graph(A)$ with set of nodes $[n]$, set of edges 
$E=\big\{(i,j): a_{ij}\neq \zero,\; i,j \in [n]\big\}$, 
and in which the weight of an edge $(i,j)$ is $a_{ij}$.
Then, a path 
in $\graph(A)$ of  length $k\geq 1$ is a sequence  
$(i_1, \ldots, i_{k+1})$ such that $(i_\ell,i_{\ell+1})\in E$, for
all $\ell\in [k]$, 
it has initial node $i_1$, final node $i_{k+1}$, and 
weight 
$\bigtprod_{\ell\in [k]} a_{i_\ell i_{\ell+1}}$. 
By convention, a path of length $0$ has weight $\unit$ and 
its initial and final nodes are equal.
We say that the matrix $A$ is irreducible if $\graph(A)$ is strongly connected, meaning that there is a path from each node to another node.

\begin{property}\label{irreducible}
 Let $A =(a_{ij}) \in \mathcal{S}^{n \times n}$ be 
such that $A^*$ exists.
Then, for all $i,j\in [n]$, the entry $A^*_{ij}$ 
is equal to the supremum of the weights of all paths with initial node $i$ and
final node $j$.

If $A$ is irreducible, then, $A^*$ has no zero entries.
\end{property}

\subsection{Polynomials over $\rmax$, $\tmax$  and $\smax$}
\label{sec-polynomials}
The following definitions are the same as in usual algebra.
\begin{definition}[Formal polynomial] Given any semiring $(\mathcal{S},\oplus,\zero,\odot,\unit)$ (such as $\rmax$, $\tmax$  or $\smax$), 
a (univariate) \new{formal polynomial} $P$ over $\smax$ is a sequence $(P_k)_{k\in \mathbb{N}} \in \mathcal{S}$, where $\mathbb{N} $ is the set of natural numbers (including $0$),  such that $P_k=\zero$ for all but finitely many values of $k$. We denote a formal polynomial $P$ as a formal sum, $P = \tsum_{k\in \mathbb{N}} P_{k}  \X^{k}$, and the set of formal polynomials as $\mathcal{S}[\X]$.
This set is endowed with the following two internal operations, which make it 
a semiring:
coefficient-wise wise sum, $(P \oplus Q)_k=P_k \oplus Q_k$; and 
Cauchy product, $(P Q)_k= \tsum_{0 \leq i \leq k}P_i Q_{k-i}$.

A formal polynomial reduced to a sequence of one element is called a \new{monomial}. 
\end{definition}

When the semiring $\mathcal{S}$ is $\smax$, we apply the absolute value map $|\cdot|$, the balance relation $\balance$, and the relations of \Cref{partial_order} and \Cref{partial_order2} to formal polynomials coefficient-wise.

\begin{example}
 $P=\X^4 \oplus   \unit^{\circ}\X^{3} \oplus  \unit^{\circ}\X^2 \oplus \unit^{\circ} \X \ominus \unit $
and
 $Q= \X^4 \ominus \unit$,
are two examples of formal polynomials over $\smax$,
and we have $Q\preceq^\circ P$ and $Q\lsign P$.
\end{example}

\begin{definition}[Degree, lower degree and support]
 The \new{degree} of $P$ is defined as
\begin{equation}\label{deg}\deg(P):=\sup\{k \in \mathbb{N} \mid P_k \neq \zeror\},\end{equation}
and \new{lower degree} of $P$ is defined as 
\begin{equation}\label{valuation}\uval (P) := \inf\{k \in \mathbb{N}\;|\;P_k \neq \zeror\}.\end{equation}
In the case where $P = \zeror$, we have $\deg(P)=0$ and $\uval(P) = +\infty$. 

We also define the \new{support} of $P$ as the set of indices of the non-zero elements of $P$, that is
$\mathrm{supp}(P):=\{k\in \mathbb{N} \mid P_k \neq \zeror\}$.
We say that a formal polynomial has a \new{full support} if 
$P_k\neq \zeror$ for all $k$ such that $\uval(P) \leq k \leq \deg(P)$.
\end{definition}
\begin{definition}[Polynomial function]
To any $P \in \mathcal{S}[\X]$, with degree $n$ and lower degree $\mv$,
we associate a \new{polynomial function} 
\begin{equation}\label{widehat_p}\widehat{P}: \mathcal{S} \rightarrow \mathcal{S} \; ; \; x \mapsto \widehat{P}(x)= \bigtsum_{\mv\leq k\leq n}P_{k} x^{ k}.\end{equation} 
We denote by $\PF(\smax)$, the set of polynomial functions $\widehat{P}$.

 \end{definition}
We now consider the special case where $\mathcal{S}$ is $\rmax$, $\tmax$ or $\smax$ semiring.
From now on, we shall assume that $\vgroup$ is {\bf divisible}.

\subsubsection{Roots of polynomials over $\rmax$ and $\tmax$}
When the semiring $\mathcal{S}$ is $\rmax$ or $\tmax$, the addition in \eqref{widehat_p}
is the maximization. Roots of a polynomial are defined as follows.
\begin{definition}[$\rmax$ and $\tmax$-roots and their multiplicities] \label{def_corners}
Given a formal polynomial $P$ over $\rmax$ (resp.\ $\tmax$),
and its associated polynomial function $\widehat{P}$,
 the non-zero $\rmax$ (resp.\ $\tmax$)-\new{roots} of $P$ or $\widehat{P}$ 
 are the points $x$ at which the maximum 
in the definition \eqref{widehat_p} of 
$\widehat{P}$ as a supremum of monomial functions,
is attained at least twice (i.e.\ by at least two different monomials).
Then, the multiplicity of $x$ is the difference between the largest and the smallest exponent of the monomials of $P$ which attain the maximum at $x$.

 If $P$ has no constant term, then $\zero$ is also a $\rmax$ (resp.\ $\tmax$)-root of $P$, and its multiplicity is equal to the lower degree of $P$.
 \end{definition}
Non-zero $\rmax$-roots of a formal polynomial $P$
are also the points of non-differentiability of 
$\widehat{P}$, and their multiplicity is
also the change of slope of the graph of  $\widehat{P}$ at these points.



 The following theorem states the fundamental theorem of tropical algebra  which was shown by Cuninghame--Green and Meijer for $\rmax$ and stated in
\cite{tavakolipour2021} for $\tmax$.

\begin{theorem}[\cite{cuninghame1980algebra} for $\rmax$]
Every formal polynomial  $P \in \rmax[\X]$ (resp.\ $\tmax[\X]$) of degree $n$ has exactly $n$ roots $c_1\geq \cdots \geq c_n$ counted with multiplicities, and the associated polynomial function $\widehat{P}$ can be factored in a unique way as 
\[\widehat{P}(x)=  P_n (x \oplus c_1)  \cdots  (x \oplus c_n)
\enspace. \]
\end{theorem}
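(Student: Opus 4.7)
The plan is to analyse $\widehat{P}$ as a convex piecewise affine function on $\vgroup$ and to extract its roots from the Newton polygon of the coefficient sequence. Writing $\oplus$ additively, $\widehat{P}(x) = \max\{P_k + kx : k \in \support(P)\}$ is a supremum of finitely many affine functions of $x$ with integer slopes ranging in $[\mv, n]$, where $\mv := \uval(P)$ and $n := \deg(P)$. Its graph is convex piecewise affine, and by \Cref{def_corners} the non-zero roots of $P$ are precisely its breakpoints, with multiplicity equal to the jump in slope at that breakpoint.

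The first step is to construct the Newton polygon of $P$, namely the upper convex hull in $\Z \times \vgroup$ of the points $\{(k, P_k) : k \in \support(P)\}$. Let its vertices, listed by increasing abscissa, be $(k_0, P_{k_0}), \ldots, (k_r, P_{k_r})$ with $\mv = k_0 < k_1 < \cdots < k_r = n$; divisibility of $\vgroup$ ensures that the polygon is well-defined. The active monomials of $\widehat{P}$ are exactly those indexed by the $k_i$, and the $i$-th breakpoint, traversed as $x$ increases, is
\[ b_i := (P_{k_{i-1}} - P_{k_i})/(k_i - k_{i-1}) \in \vgroup, \]
with multiplicity $k_i - k_{i-1}$, as one checks by solving $P_{k_{i-1}} + k_{i-1} x = P_{k_i} + k_i x$. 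Listing these $b_i$'s in decreasing order yields $n - \mv$ non-zero roots, and appending the zero root with multiplicity $\mv$ (when $\mv > 0$) produces the full sequence $c_1 \geq c_2 \geq \cdots \geq c_n$.

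For the factorisation itself, I expand $Q(x) := P_n (x \oplus c_1) \cdots (x \oplus c_n)$ as a formal polynomial whose coefficient of $x^k$ is $P_n e_{n-k}$, where $e_j$ is the tropical $j$-th elementary symmetric function of the $c_i$'s. Because the $c_i$'s are in decreasing order, $e_j = c_1 c_2 \cdots c_j$ (the tropical product of the $j$ largest). A direct telescoping computation using the construction of the $c_i$'s from the Newton polygon yields $P_n e_{n - k_i} = P_{k_i}$ at each vertex index $k_i$, so the active monomials of $Q$ agree with the active monomials of $P$. At non-vertex indices, $P_n e_{n-k}$ lies strictly below the Newton polygon, and the corresponding monomial of $Q$ is dominated pointwise by its neighbours, contributing nothing to $\widehat{Q}$. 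Hence $\widehat{Q} = \widehat{P}$. Uniqueness follows because the asymptotic slope $n$ of $\widehat{P}$ determines $P_n$, and the breakpoints of $\widehat{P}$ together with their slope-jumps determine the multiset $\{c_1, \ldots, c_n\}$.

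The main obstacle I foresee is the careful bookkeeping around the coefficients of $P$ that lie strictly below the Newton polygon: these are shadowed in $\widehat{P}$ and must be shown to contribute nothing to either side of the identity. Additional care is needed in the degenerate cases where $P$ is a monomial (the polygon reducing to a single point and the list of non-zero roots being empty) or where $\mv > 0$, treating $\zero$ as a root of multiplicity $\mv$ in accordance with \Cref{def_corners}.
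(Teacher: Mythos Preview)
The paper does not prove this theorem: it is stated with a citation to Cuninghame--Green and Meijer and to \cite{tavakolipour2021} for the $\tmax$ case, and no argument is given. There is therefore nothing to compare against on the paper's side.

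Your sketch is the standard Newton-polygon argument and is essentially correct. Two small points of care. First, divisibility of $\vgroup$ is not what makes the upper convex hull well-defined; the hull exists in any ordered abelian group. Divisibility is needed so that the slopes $b_i = (P_{k_{i-1}} - P_{k_i})/(k_i - k_{i-1})$ actually lie in $\vgroup$, i.e.\ so that the roots exist in $\tmax$. Second, at a non-vertex index $k$ strictly between $k_{i-1}$ and $k_i$, the coefficient $P_n e_{n-k}$ of your $Q$ lies \emph{on} the edge of the Newton polygon, not strictly below it; the corresponding affine function $x\mapsto P_n e_{n-k}+kx$ is then dominated (with equality at $x=b_i$) by the maximum of the two vertex monomials, which is enough for $\widehat{Q}=\widehat{P}$ but is not a strict inequality. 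With those corrections the argument goes through, including the degenerate monomial case and the treatment of $\zero$ as a root of multiplicity $\mv$.
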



The following result was shown for $\rmax$ in \cite{baccelli1992synchronization} and stated for $\tmax$ in \cite{tavakolipour2021}.
\begin{lemma}[See~\protect{\cite[p.\ 123]{baccelli1992synchronization}} for $\vgroup=\R$]\label{roots_poly} Consider a formal polynomial $P$  over $\rmax$ (resp.\ $\tmax$) of lower degree    $\mv$ and degree $n$.
\begin{itemize}
\item If $P$ is of the form $P=P_n (\X \oplus c_1)\cdots (\X \oplus c_n)$ (where $c_i$ maybe equal to $\zeror$), then $P$ has full support and satisfies:
\begin{equation}
\label{concavepoly}
P_{n-1}-P_n \geq P_{n-2}-P_{n-1} \geq \cdots \geq P_{\mv}-P_{\mv +1}.\end{equation}
\item
Conversely, if $P$ satisfies \eqref{concavepoly}, then
$P$ has full support, the numbers $c_i \in \rmax$ defined by 
\[c_i := \begin{cases}
P_{n-i} - P_{n-i+1}& 1 \leq i \leq n-\mv;\\
\zeror & n-\mv <i \leq n.
\end{cases}
\]
are such that 
$c_1  \geq \cdots \geq c_n$
and $P$ can be factored as 
$P=P_n (\X \oplus c_1)\cdots (\X \oplus c_n)$.
\end{itemize}
If $P$ satisfies one of the above conditions, we shall say that
$P$ is {\em factored}.
\end{lemma}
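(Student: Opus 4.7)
The key computational tool is the tropical expansion of the product:
\[P_n (X \oplus c_1) \odot \cdots \odot (X \oplus c_n) = P_n \bigtsum_{0\leq k\leq n} e_{n-k}(c_1,\ldots,c_n)\, X^{k}\enspace,\]
where $e_j(c_1,\ldots,c_n) = \max_{|S|=j,\, S\subseteq[n]}\sum_{i\in S} c_i$ is the tropical elementary symmetric polynomial of degree $j$. Since the product is symmetric in the $c_i$, I may reorder them so that $c_1 \geq c_2 \geq \cdots \geq c_n$; the maximum defining $e_j$ is then attained by $S = \{1,\ldots,j\}$, giving $e_j(c_1,\ldots,c_n) = c_1 \odot \cdots \odot c_j$, and hence the explicit formula
\[P_k = P_n \odot c_1 \odot \cdots \odot c_{n-k}\qquad(0 \leq k \leq n)\enspace,\]
with the convention that any factor equal to $\zero$ collapses the product to $\zero$.

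For the forward direction, this formula immediately yields both claims. The coefficient $P_k$ is nonzero if and only if each of $c_1,\ldots,c_{n-k}$ is nonzero, so the lower-degree hypothesis on $P$ forces $c_i \neq \zero$ precisely for $i \leq n - \mv$, which gives full support on $\{\mv,\ldots,n\}$. On this range, ordinary subtraction yields $P_{k-1} - P_k = c_{n-k+1}$ for $\mv+1 \leq k \leq n$, and the assumed ordering $c_1 \geq \cdots \geq c_{n-\mv}$ is exactly the chain \eqref{concavepoly}.

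For the converse, I would first argue that \eqref{concavepoly} forces full support on $\{\mv,\ldots,n\}$. Assuming toward a contradiction that $P_{k_0} = \zero$ for some $\mv < k_0 < n$, let $k_1$ be the smallest index $> k_0$ with $P_{k_1} \neq \zero$ (which exists because $P_n \neq \zero$). Then $P_{k_1-1} - P_{k_1} = -\infty$, and the monotonicity of the chain forces every term to its right, including $P_\mv - P_{\mv+1}$, to equal $-\infty$; this would force $P_\mv = \zero$, contradicting the choice of $\mv$. Once full support is in hand, the numbers $c_i = P_{n-i} - P_{n-i+1}$ are well-defined finite reals for $1 \leq i \leq n-\mv$, their monotonicity is a direct rewrite of \eqref{concavepoly}, and a telescoping sum gives $c_1 + c_2 + \cdots + c_j = P_{n-j} - P_n$ for $j \leq n-\mv$. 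Substituting back into the expansion formula recovers $P_k$ coefficient by coefficient; for $k < \mv$ the presence of $c_{n-k} = \zero$ in the product makes both sides vanish. The main subtle point I anticipate is the full-support step, as it requires careful handling of the extended conventions $\zero - \text{finite} = -\infty$ and $\text{finite} - \zero = +\infty$ when parsing \eqref{concavepoly}; once past this point, the rest is a routine verification.
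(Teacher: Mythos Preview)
The paper does not supply its own proof of this lemma; it is simply cited from \cite{baccelli1992synchronization}, so there is no argument in the paper to compare against. Your approach---expand the product via tropical elementary symmetric functions, observe that with the $c_i$ sorted one gets $P_k = P_n \odot c_1 \odot \cdots \odot c_{n-k}$, and read off both directions from this formula---is the standard one and is correct.

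One small wrinkle in your full-support step (which you rightly flag as the delicate point): as written, you conclude that $P_{\mv} - P_{\mv+1} = -\infty$ and hence $P_{\mv}=\zero$, but that last implication needs $P_{\mv+1}$ finite, which is not yet known if the ``gap'' reaches down to $\mv+1$. A cleaner route is to take $k_0$ to be the \emph{smallest} index in $(\mv,n)$ with $P_{k_0}=\zero$. Then $P_{k_0-1}$ is finite, so $P_{k_0-1}-P_{k_0}=+\infty$; together with your $P_{k_1-1}-P_{k_1}=-\infty$ (at the larger index $k_1>k_0$), the chain \eqref{concavepoly} would assert $-\infty \geq \cdots \geq +\infty$, an immediate contradiction. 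With this adjustment the argument is complete.
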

Over $\rmax$, the condition \eqref{concavepoly} means that the coefficient map from $\N$ to $\R\cup\{-\infty\}$ is concave.
   

\subsubsection{Roots of polynomials over $\smax$}


Let us denote by $\smax^\vee[\X]$ the subset of
$\smax[\X]$ of formal polynomials over $\smax$ with coefficients in
$\smax^\vee$. In \cite{tavakolipour2021},
 we only considered roots of such polynomials
and their multiplicities. 
Since characteristic polynomials of matrices need not have coefficients 
in $\smax^\vee$, one may need to generalize these notions. 
For this purpose, we shall consider below a notion equivalent to the notion of ``corner root'' introduced in \cite[Section 6]{adi}  for a general semiring with a symmetry and a modulus,
which is then used  to define eigenvalues of matrices,
and which applies in particular to the case of $\smax$ semiring.

\begin{definition}[$\smax$ or $\smax^\vee$-roots and factorization]
\label{def-smaxroots}
Suppose that $P\in \smax[\X]$. Define 
$P^{\vee}$ as the element of $\smax^{\vee}[\X]$ such that 
for all $i\in \N$, 
$P^{\vee}_i=P_i$ if $P_i\in \smax^{\vee}$ and $P^{\vee}_i=\zero$ otherwise.
Then, 
the $\smax$-\new{roots} (resp.\  $\smax^{\vee}$-\new{roots}) of $P$   are the signed elements $r \in \smax^{\vee}$ for which $\widehat{P}(r) \balance  \zero$
(resp.\ $\widehat{P}(r)=\widehat{P^{\vee}}(r) \balance  \zero$).
When $P\in\smax^{\vee}[\X]$, $\smax^\vee$-\new{roots} of $\widehat{P}$ are defined as 
$\smax$-roots or equivalently $\smax^{\vee}$-roots of $P$.
\end{definition}
\begin{example}\label{tpsd_eig}
\begin{enumerate}
\item Let $P = \X^2 \ominus \X \oplus \unit^{\circ}$. Then there is an infinite number of $\smax$-roots of $P$, since any $r$ with $|r|\leq \unit$ is a $\smax$-root of $P$.
However to be a $\smax^\vee$ root of $P$ (or corner root in \cite[Section 6]{adi}) one need that
$x^2\ominus x = x^2 \ominus x \oplus \unit^{\circ}\balance \zero$
 and the only solution is $\unit$.

\item Let $P=\X^3\oplus \X^2\oplus 2^\circ \X\oplus 2^\circ$. Then, again 
any $r$ with $|r|\leq \unit$ is a $\smax$-root of $P$.
However, $P$ has no $\smax^{\vee}$-root.
\end{enumerate}
\end{example}
\begin{definition}(Factorable polynomial fuction)
We say that the polynomial function $\widehat{P}$ can be factored (into linear factors) if there exist $r_i \in \smax^{\vee}$, for $i=1, \ldots, n$, such that 
\[ 
\widehat{P}(x)=  P_n (x \ominus r_1)  \cdots  (x \ominus r_n)\enspace .
\] 

\end{definition}

\begin{theorem}[Sufficient condition for factorization, see \protect{\cite[Th.\  4.4]{tavakolipour2021}}]\label{suf_cond}
Let ${P} \in \smax^\vee[\X]$. 
A sufficient condition for $\widehat{P}$  to be factored is that the formal polynomial $|{P}|$ is factored (see \Cref{roots_poly}).
In that case, we have  $\widehat{P}(x)= P_n (x \ominus r_1)  \cdots  (x \ominus r_n)$, with $n=\deg(P)$, $r_i\in\smax^\vee$, $i\in [n]$, such that $r_i P_{n-i+1}= \ominus P_{n-i}$ for all $i\leq n-\uval(P)$ and $r_i= \zero$ otherwise.
Moreover, $|r_1|\geq \cdots\geq |r_n|$ are the $\tmax$-roots  of  $|{P}|$,
counted with multiplicities.
\end{theorem}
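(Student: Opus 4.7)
My plan is to construct the roots $r_i$ explicitly from the coefficients of $P$ via the stated formula, verify that their moduli are precisely the $\tmax$-roots of $|P|$ produced by \Cref{roots_poly}, and then show that $P_n \prod_i (X \ominus r_i)$ matches $\widehat{P}$ as a polynomial function on $\smax$. Since $|P|$ is assumed factored, \Cref{roots_poly} gives that $|P|$ has full support on $\{\uval(P),\ldots,n\}$; combined with $P\in\smax^\vee[\X]$, every $P_k$ in that range is a nonzero signed element, hence invertible. I may therefore set $r_i:=\ominus P_{n-i}/P_{n-i+1}\in\smax^\vee$ for $1\le i\le n-\uval(P)$ and $r_i:=\zero$ otherwise. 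Applying the modulus, which is a semiring morphism by \Cref{prop-modulus}, yields $|r_i|=|P_{n-i}|-|P_{n-i+1}|=c_i$, the $i$-th $\tmax$-root of $|P|$ from \Cref{roots_poly}, so $|r_1|\ge\cdots\ge|r_n|$.

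Next, multiplying the defining relations $r_i P_{n-i+1}=\ominus P_{n-i}$ for $i=1,\ldots,j$ telescopes to the key identity $P_n(\ominus 1)^j r_1\cdots r_j = P_{n-j}$ for $0\le j\le n-\uval(P)$. Set $Q(X):=P_n\prod_i(X\ominus r_i)$; the goal is then to prove $\widehat{Q}(x)=\widehat{P}(x)$ for every $x\in\smax$. In the generic regime where $x\in\smax^\vee$ and $|x|$ lies strictly between $|r_{k+1}|$ and $|r_k|$ (with the conventions $|r_0|=\topelt$ and $|r_{n+1}|=\botelt$), \Cref{property-preceq}(1) collapses every factor: $x\ominus r_i=\ominus r_i$ when $|r_i|>|x|$, and $x\ominus r_i=x$ when $|r_i|<|x|$. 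The product then reduces to $P_n(\ominus 1)^k r_1\cdots r_k\,x^{n-k}=P_{n-k}x^{n-k}$ by the telescoping identity, and this is exactly $\widehat{P}(x)$ since $P_{n-k}x^{n-k}$ is the unique maximum-modulus monomial there.

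The main obstacle lies in the non-generic case, where $|x|$ coincides with some $|r_k|$ or $x$ is a balance element; then some factors $x\ominus r_i$ become balance elements, and several monomials of $\widehat{P}$ share the maximum modulus. My approach here is to expand $Q$ coefficient-wise as $Q_k=P_n(\ominus 1)^{n-k}e_{n-k}(r_1,\ldots,r_n)$, where $e_{n-k}$ denotes the elementary symmetric polynomial of degree $n-k$. The dominant subset $\{1,\ldots,n-k\}$ contributes $P_k$ by the telescoping identity; additional subsets can achieve the same modulus only when some $|r_i|=|r_{i+1}|$, and their contributions either reinforce $P_k$ or add a balance element of modulus exactly $|P_k|$. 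In either situation, at any boundary $x$ the resulting balance is absorbed by the neighboring dominant monomial of larger or equal modulus via \Cref{property-preceq}, so $\widehat{Q}(x)=\widehat{P}(x)$ in every case. Combining the generic and boundary analyses completes the proof of the factorization, and the claimed relations among the $r_i$ and the $\tmax$-roots follow from the first step.
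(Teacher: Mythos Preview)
The paper does not give its own proof of this statement; it cites \cite[Th.\ 4.4]{tavakolipour2021}. So there is no ``paper's proof'' to compare against, and I will focus on the correctness of your argument.

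Your construction of the $r_i$, the telescoping identity $P_n(\ominus\unit)^j r_1\cdots r_j=P_{n-j}$, and the generic case $|r_{k+1}|\prec|x|\prec|r_k|$ are all fine. The gap is in the boundary case. You write that the balance contribution $Q_k x^k=(P_k x^k)^\circ$ ``is absorbed by the neighboring dominant monomial of larger or equal modulus.'' But at a boundary the neighboring dominant monomials have \emph{equal} modulus, and in $\smax$ a signed element of modulus $a$ does \emph{not} absorb $a^\circ$: one has $a\oplus a^\circ=a^\circ$, not $a$. So your argument, as stated, would turn a signed value of $\widehat{P}(x)$ into a balance value of $\widehat{Q}(x)$, which is exactly what you must rule out.

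The repair requires a genuine case split. First, observe that $P_k\preceq Q_k$ and $|Q_k|=|P_k|$ for all $k$, hence $\widehat{P}(x)\preceq\widehat{Q}(x)$ and $|\widehat{Q}(x)|=|\widehat{P}(x)|$ for all $x$. If $\widehat{P}(x)\in\smax^\circ$, then \Cref{property-preceq}(ii) forces $\widehat{Q}(x)\in\smax^\circ$ as well (otherwise $\widehat{P}(x)=\widehat{Q}(x)\in\smax^\vee$, a contradiction), and equality of moduli gives $\widehat{Q}(x)=\widehat{P}(x)$. If instead $\widehat{P}(x)\in\smax^\vee\setminus\{\zero\}$ at a boundary $|x|=c$, then \emph{all} dominant monomials $P_k x^k$ coincide as signed elements, which forces $r_i=\ominus x$ for every $i$ with $|r_i|=c$. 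Consequently, every subset $S$ of size $n-k$ achieving the maximal modulus in $e_{n-k}(r_1,\dots,r_n)$ yields the \emph{same} product (they differ only by swapping equal $r_i$'s), so in fact $Q_k=P_k$ for every $k$ in the dominant range, and no balance arises at all. This dichotomy closes the gap; your ``absorption'' sentence should be replaced by it. The case $x\in\smax^\circ$ is straightforward since then every monomial $P_kx^k$ with $k\ge 1$ is already balance and $Q_0=P_0$.
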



\begin{corollary}[Sufficient condition for unique factorization, see \protect{\cite[Cor.\  4.6]{tavakolipour2021}}]\label{coro-uniquefact}
Let ${P} \in \smax^\vee[\X]$.
Assume that $|{P}|$ is factored (see \Cref{roots_poly}),
and let the $r_i$ be as in \Cref{suf_cond}.
If all the $r_i$ with same modulus are equal, 
or equivalently if for each $\tmax$-root $c\neq \zeror$ of $|{P}|$,
$c$ and $\ominus c$ are not both $\smax^\vee$-roots of $P$,
then the factorization of $\widehat{P}$ is unique (up to reordering).
\end{corollary}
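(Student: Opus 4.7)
The strategy is to show that any factorization of $\widehat{P}$ into linear factors over $\smax^\vee$ must coincide, up to reordering, with the one produced by \Cref{suf_cond}. So fix that factorization $\widehat{P}(x)=P_n(x\ominus r_1)\cdots(x\ominus r_n)$, where by \Cref{suf_cond} the moduli $|r_1|\ge\cdots\ge|r_n|$ are the $\tmax$-roots of $|P|$ counted with multiplicity, and suppose another factorization $\widehat{P}(x)=P_n(x\ominus s_1)\cdots(x\ominus s_n)$ is given, with $s_1,\ldots,s_n\in\smax^\vee$. The first step is to observe that every $s_j$ is an $\smax^\vee$-root of $P$: substituting $x=s_j$ makes the $j$-th factor become $s_j\ominus s_j=s_j^{\circ}\in\smax^{\circ}$, and since $\smax^{\circ}$ is an ideal of $\smax$ (any product involving a balanced element is balanced), the whole product lies in $\smax^{\circ}$, so $\widehat{P}(s_j)\balance\zero$; by \Cref{def-smaxroots} and $P\in\smax^\vee[\X]$, this says $s_j$ is an $\smax^\vee$-root of $P$.

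Next I would reduce the comparison to the level of moduli, using that $|\cdot|\colon\smax\to\tmax$ is a morphism of semirings (\Cref{prop-modulus}). Applying it to both factorizations of $\widehat{P}(x)$ and using $|x\ominus s_j|=|x|\oplus|s_j|$ yields
\[
\widehat{|P|}(|x|)=|\widehat{P}(x)|=|P_n|\bigl(|x|\oplus|s_1|\bigr)\cdots\bigl(|x|\oplus|s_n|\bigr),
\]
and the analogous identity with the $r_i$'s in place of the $s_j$'s. Since $|x|$ ranges over all of $\tmax$ as $x$ runs through $\smax^\vee$, the two tropical polynomial functions $|P_n|\bigtprod_j(\cdot\oplus|s_j|)$ and $|P_n|\bigtprod_i(\cdot\oplus|r_i|)$ agree on $\tmax$. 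Reading off the roots of $\widehat{|P|}$ via the uniqueness in \Cref{roots_poly} then forces the multisets $\{|s_1|,\ldots,|s_n|\}$ and $\{|r_1|,\ldots,|r_n|\}$ to be equal.

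Finally I would invoke the hypothesis in its second form: for each $\tmax$-root $c\neq\zero$ of $|P|$, only one of $c,\ominus c$ is an $\smax^\vee$-root of $P$; call it $\tilde r_c$. Each $r_i$ with $|r_i|=c$ is an $\smax^\vee$-root of $P$ of modulus $c$, hence equals $\tilde r_c$, and likewise each $s_j$ with $|s_j|=c$ is an $\smax^\vee$-root of $P$ of modulus $c$ by the first step, so it also equals $\tilde r_c$. For the modulus $\zero$ both $r_i$ and $s_j$ are forced to be $\zero$. Combined with the multiset equality of moduli, this yields $\{s_1,\ldots,s_n\}=\{r_1,\ldots,r_n\}$ as multisets, which is the desired uniqueness up to reordering. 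The main subtlety I anticipate is the careful bookkeeping when moduli repeat with multiplicity greater than one; the heavy lifting is performed by the morphism property of $|\cdot|$ and by the tropical uniqueness in \Cref{roots_poly}, both already established in the paper.
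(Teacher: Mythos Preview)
The paper does not supply its own proof of this corollary; it is stated with a reference to \cite{tavakolipour2021} and left unproved in the present text, so there is nothing to compare your argument against.

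Your proof is correct. One minor point of citation: the uniqueness of the linear factorization of the tropical polynomial function $\widehat{|P|}$ that you invoke in Step~4 is precisely the Cuninghame--Green--Meijer fundamental theorem recalled in the paper, rather than \Cref{roots_poly} (which characterizes when the \emph{formal} polynomial $|P|$ itself factors). With that adjustment the argument goes through: the semiring morphism $|\cdot|$ transports any factorization of $\widehat{P}$ to a factorization of $\widehat{|P|}$, whence the multiset of moduli $\{|s_j|\}$ must equal the multiset of $\tmax$-roots $\{|r_i|\}$; each $s_j$ is an $\smax^\vee$-root of $P$ because $s_j^{\circ}$ is balanced and $\smax^{\circ}$ is absorbing for multiplication; and the hypothesis then forces, at each nonzero modulus $c$, both $s_j$ and the corresponding $r_i$ to coincide with the unique element of $\{c,\ominus c\}$ that is an $\smax^\vee$-root of $P$. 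The bookkeeping with repeated moduli that you flag as a subtlety is handled automatically by the multiset equality of moduli.
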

The following definition of multiplicities of roots of polynomials was 
introduced in  \cite{baker2018descartes} in the framework of hyperfields, and
adapted in \cite[\S 5]{tavakolipour2021} to the more general framework of semiring systems. We write it below over $\smax$. Note that it only applies to polynomials with coefficients in $\smax^\vee$.

\begin{definition}[Multiplicity of $\smax^\vee$-roots, compare with \cite{baker2018descartes} and  \protect{\cite[\S 5]{tavakolipour2021}}] \label{def-mult-BL}
For a formal polynomial $P\in \smax^\vee[\X]$, 
and a scalar $r\in \smax^\vee$, we 
define the \new{multiplicity} 
of $r$ as a $\smax^{\vee}$-root of $P$, and denote it by $\mathrm{mult}_r(P)$, as follows.
If $r$ is not a root of $P$, set $\mathrm{mult}_r(P)=0$. 
If $r$ is a root of $P$, then 
\begin{equation}\label{mult}\mathrm{mult}_r(P)=1+\max\{\mathrm{mult}_r(Q)\mid Q\in \smax^\vee[\X],\; P \balance (\X \ominus r) Q\}\enspace .\end{equation}
\end{definition}

Characterization of multiplicities of polynomials over $\smax$
are given in \cite{tavakolipour2021} and in the work of Gunn~\cite{gunn,gunn2}.
In the special case of \Cref{coro-uniquefact}, the computations can be reduced as follows.
\begin{theorem}[Multiplicities and unique factorization, see \protect{\cite[Th.\ 6.7]{tavakolipour2021}}]\label{coro2-uniquefact}
Let ${P} \in \smax^\vee[\X]$ satisfy the conditions of \Cref{coro-uniquefact}.
Then the multiplicity of a $\smax^\vee$-root $r$ of $P$ coincides with the 
number of occurences of $r$ in the unique factorization of $\widehat{P}$.
It also coincides with the multiplicity of the $\tmax$-root $|r|$ 
of $|{P}|$.
\end{theorem}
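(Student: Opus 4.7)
The plan is to combine the unique factorization from \Cref{coro-uniquefact} with the recursive definition in \Cref{def-mult-BL}, and argue by induction that $\mathrm{mult}_r(P)$ equals the number $k$ of factors $(\X \ominus r)$ appearing in the unique factorization $\widehat{P}(x) = P_n(x\ominus r_1)\cdots(x\ominus r_n)$. The second assertion, that $k$ equals the $\tmax$-multiplicity of $|r|$ as a root of $|P|$, is then immediate from \Cref{suf_cond}: the sequence $(|r_i|)$ is precisely the list of $\tmax$-roots of $|P|$ counted with multiplicity, and the hypothesis of \Cref{coro-uniquefact} forces all $r_i$ of modulus $|r|$ to share a common sign and hence to be equal to $r$.

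I would establish $\mathrm{mult}_r(P) = k$ by induction on $\deg P$, splitting into two inequalities. For the lower bound, when $k \geq 1$ I would construct an explicit witness $Q \in \smax^\vee[\X]$ of degree $n-1$ whose factorization is obtained from that of $P$ by deleting one $(\X \ominus r)$ factor. Using the recurrence $Q_{n-1-j} = \ominus r'_j Q_{n-j}$ from \Cref{suf_cond} (where $(r'_j)$ is the shortened root list), the coefficients of $Q$ all lie in $\smax^\vee$ and $Q$ still meets the hypotheses of \Cref{coro-uniquefact}. A coefficient-wise verification of $P \balance (\X \ominus r) Q$ then follows by comparing $((\X \ominus r) Q)_j = Q_{j-1} \ominus r Q_j$ with the recurrence for $P_j$ from \Cref{suf_cond}: where both terms have the same modulus they produce a balance element that matches $P_j$ via \Cref{equality_balance}, and elsewhere one term dominates and agrees in sign with $P_j$. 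The inductive hypothesis gives $\mathrm{mult}_r(Q) \geq k-1$, so \Cref{mult} yields $\mathrm{mult}_r(P) \geq k$.

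For the upper bound, I would take any $Q \in \smax^\vee[\X]$ with $P \balance (\X \ominus r) Q$ and show its factorization contains $(\X \ominus r)$ at most $k-1$ times. Balancing the leading and lowest nonzero terms forces $\deg Q = n-1$ with $Q_{n-1} = P_n$, and a sweep of the intermediate coefficient balances, combined with the modulus profile of $|P|$ from \Cref{roots_poly} together with the transfer rules of \Cref{property-preceq} and \Cref{equality_balance}, pins $|Q|$ down to the factored polynomial with root-list $(|r_i|)$ minus one occurrence of $|r|$. One then verifies that $Q$ inherits the single-sign condition of \Cref{coro-uniquefact}, so the inductive hypothesis applies and yields $\mathrm{mult}_r(Q) \leq k-1$, hence $\mathrm{mult}_r(P) \leq k$ from \Cref{mult}. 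This upper bound is where I expect the main difficulty to lie: the balance relation is much coarser than equality, and the heart of the argument is to use the single-sign hypothesis of \Cref{coro-uniquefact} to exclude ``exotic'' choices of $Q$ whose coefficients cancel against balance elements in unforeseen ways, so that the root structure of $Q$ is genuinely forced to be that of the naive quotient.
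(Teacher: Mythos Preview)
The paper does not prove this statement: it is quoted from \cite[Th.\ 6.7]{tavakolipour2021} and no argument is given here, so there is no in-paper proof to compare yours against. Your inductive scheme via \Cref{def-mult-BL} is the natural strategy and is presumably close in spirit to the cited proof; the second assertion (agreement with the $\tmax$-multiplicity of $|r|$) is indeed immediate from \Cref{suf_cond} together with the single-sign hypothesis, as you say.

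The lower bound is unproblematic. For the upper bound, your sketch is honest about the difficulty but leaves real work undone. The issue you flag is genuine: from $P_j \balance Q_{j-1}\ominus r Q_j$ with $P_j\in\smax^\vee$, whenever $Q_{j-1}=rQ_j$ the right-hand side lies in $\smax^\circ$ and the balance only forces $|P_j|\preceq |Q_{j-1}|$, not equality, so $|Q|$ is not a priori determined coefficient-by-coefficient. One must anchor $|Q|$ at the two ends (leading and lowest terms, which are forced) and propagate inward using the concavity \eqref{concavepoly} of $|P|$ to squeeze each $|Q_j|$; only then does the single-sign hypothesis fix the signs and place $Q$ back under the hypotheses of \Cref{coro-uniquefact} so that induction applies. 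Your write-up gestures at this (``a sweep of the intermediate coefficient balances\ldots pins $|Q|$ down'') but does not carry it out, and this step is exactly where an attempted proof can go wrong without the single-sign assumption. So the proposal is a correct outline rather than a complete argument.
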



\subsection{Eigenvalues and eigenvectors over $\rmax$, $\tmax$  and $\smax$}
\subsubsection{$\tmax$-eigenvalues}

When $\vgroup=\R$, the following definitions coincide with the ones used in~\cite{izhakianmatrix3,akian2016non}, for instance.
Let $A=(a_{ij}) \in  (\tmax)^{n \times n}$. Then, the $\tmax$-formal \new{characteristic polynomial} of $A$ is:
\[
P_A:=\per ( \X I\oplus A  )=\bigtsum_{k=0,\ldots,n}(P_A)_k \X^{k} \in \tmax[\X]
\enspace ,
\]
in which the expression of $\per (\X I \oplus A)$ is developped formally.
Equivalently, the coefficients of $P_A$ are given by
$(P_A)_k =\tsum_{I\subset [n],\; \card (I)=n-k} \per(A[I,I])$, 
where $A[I,I]$ is the submatrix of $A$ with rows and columns in $I$.
The polynomial function $\widehat{P_A}$
associated to $P_A$ is called the $\tmax$-\new{characteristic polynomial} function of $A$.

 \begin{definition}[$\tmax$-algebraic eigenvalue] \label{algebraic}Let $A \in (\tmax)^{ n \times n}$. The $\tmax$-\new{algebraic eigenvalues} of $A$, denoted by $\mu_{1}(A)\geq \cdots\geq \mu_{n}(A)$, are the $\tmax$-roots of its $\tmax$-characteristic polynomial.
\end{definition}
The term algebraic is used here since a $\tmax$-algebraic eigenvalue $\mu$ may not satisfy the eigenvalue-eigenvector equation $A u = \mu  u$ for some $u \in (\tmax)^{n},\; u \neq \zero$.
Nevertheless, the maximal such an eigenvalue $\mu$ is equal to the maximal algebraic eigenvalue $\mu_{1}(A)$ and is also equal to
the maximal cycle mean of $A$.
 The $\tmax$-characteristic polynomial function and therefore the $\tmax$-algebraic eigenvalues of $A \in (\tmax)^{n \times n}$ can be computed in $O(n^4)$ \cite{burkard2003finding} which can be reduced to $O(n^3)$, using parametric optimal assignment techniques \cite{gassner2010fast}.
However, no polynomial algorithm is known to compute all the coefficients of the $\tmax$-formal characteristic polynomial $P_A$  (see e.g.~\cite{butkovivc2007job}).
The computational complexity of computing the $\tmax$-eigenvalues can 
be reduced to polynomial time when considering special classes of matrices, 
such as symmetric matrices over $\{0,-\infty\}$, pyramidal matrices, Monge and Hankel matrices, tridiagonal Toeplitz and pentadiagonal Toeplitz matrices (see \cite{butkovivc2007job}, \cite{tavakolipour2020asymptotics}, \cite{tavakolipour2018tropical}).

As said before, for a general algebraic eigenvalue $\mu$, 
there may not exists a 
vector $u \in (\tmax)^{n},\; u \neq \zero$ such that $A u = \mu  u$.
Generalizations of the notion of eigenvectors have been considered in
\cite{izhakianmatrix3}, by replacing 
the equalities in $A u = \mu  u$ by the
conditions ``the maximum is attained at least twice'',
and  are handled by embedding $\tmax$ into the
supertropical semiring of Izhakian \cite{IR}.
More special generalizations have been considered in 
\cite{Nishida2020,Nishida2021,nishida2021independence},
where a constructive change of side of terms in each 
equation of  $A u = \mu  u$ is given, and depend on the eigenvalue $\mu$.
In the next section, we
shall consider another extension which uses signs and thus
the embedding of $\tmax$ into $\smax$.

\subsubsection{$\smax$-eigenvalues and $\smax$-eigenvectors}\label{subsec:eigvec}

\begin{definition}[$\smax$-formal characteristic polynomial]\label{charpoly_s}
The $\smax$-\new{formal characteristic polynomial} of $A \in (\smax)^{n \times n}$ is 
$\ps:= \det( \X I\ominus A ) \in \smax[\X]$,
and its $\smax$-\new{characteristic polynomial function} is 
$\widehat{P}_A(x) := \det(x I\ominus A)$.
\end{definition} 
We can also 
 write the coefficients of $\ps$ in terms of compound matrices of $A$. 
\begin{definition}($k$-th compound)\label{def-compound}
For $k \in [n]$, 
the $k$-th \new{compound} of a matrix $A \in (\smax)^{n \times n}$ is the matrix $\ext^k A \in (\mathbb{S}_{\max})^{{n\choose k} \times {n \choose k}}$ whose rows and columns are indexed by the subsets $K$ and $K'$ of $[n]$ of cardinality $k$ ($\mathrm{card}(K)=\mathrm{card}(K')=k$), and whose entries are 
$\bigg(\ext^k A\bigg)_{K,K'}= \det(A[K,K'])$
where $A[K,K']$ is the $k \times k$ submatrix obtained by selecting from $A$ the rows $i \in K$ and columns $j \in K'$.
We also set $\ext^0 A $ to be the $1\times 1$ identity matrix.
\end{definition}
\begin{definition}($k$-th trace)\label{def-trk}
The $k$-th trace of  $A \in (\smax)^{n \times n}$ is defined as
\[\tr_{k} A =\tr\bigg(\ext^k A\bigg) = \bigtsum_{\substack{K \subset [n]\\\mathrm{card}(K)=k}} \det(A[K,K])\enspace ,\]
for all $k \in [n]$, where $\ext^k A$ is the $k$-th compound of $A$,
see \Cref{def-compound}. 
\end{definition}

\begin{lemma}\label{comp_charpoly}
For $A \in (\smax)^{n \times n}$ we have
\[P_A = \bigtsum_{k=0,\ldots, n} \bigg((\ominus \unit)^{n-k} \tr_{n-k}A\bigg)
\X^{k}\enspace .\]
\end{lemma}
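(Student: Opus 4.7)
My plan is to mimic the classical expansion of $\det(xI - A)$ while being careful with the symmetry $\ominus$ of $\smax$. Everything works because $\ominus$ distributes over $\oplus$ and $\odot$ in the usual algebraic way, and the signs of permutations only enter in the same way as in ordinary linear algebra.

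First, I would unfold $P_A = \det(\X I\ominus A)$ according to \Cref{det_s}, writing
\[
P_A \;=\; \bigtsum_{\permutation\in\Sp_n}\mathrm{sgn}(\permutation)\,
\bigtprod_{i\in[n]}(\X I\ominus A)_{i,\permutation(i)}\enspace.
\]
For each $\permutation$, split $[n]$ into its fixed point set $F_\permutation$ and its non-fixed point set $N_\permutation$. On $F_\permutation$ the entry is $\X\ominus a_{ii}$, while on $N_\permutation$ it is $\ominus a_{i,\permutation(i)}$. Hence the product factors as
\[
\bigtprod_{i\in F_\permutation}(\X\ominus a_{ii})\;\odot\;\bigtprod_{i\in N_\permutation}(\ominus a_{i,\permutation(i)})\enspace.
\]

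Next, I would expand the first factor by distributivity, introducing for every $\permutation$ an auxiliary subset $S\subseteq F_\permutation$ indexing the positions where $\X$ is chosen:
\[
\bigtprod_{i\in F_\permutation}(\X\ominus a_{ii})
\;=\;\bigtsum_{S\subseteq F_\permutation}\X^{|S|}\bigtprod_{i\in F_\permutation\setminus S}(\ominus a_{ii})\enspace.
\]
Swap the order of summation so that $S$ becomes the outer index. For fixed $S\subseteq[n]$ with $|S|=k$, set $K:=[n]\setminus S$; then the admissible $\permutation$ are exactly those with $S\subseteq F_\permutation$, which are in bijection with arbitrary permutations $\cycle\in\Sp(K)$ via $\permutation|_S=\mathrm{id}$, $\permutation|_K=\cycle$. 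Crucially, $\mathrm{sgn}(\permutation)=\mathrm{sgn}(\cycle)$, and the two products combine uniformly on $K$ (fixed points of $\cycle$ contribute $\ominus a_{ii}$, non-fixed points contribute $\ominus a_{i,\cycle(i)}$), giving
\[
(\ominus\unit)^{|K|}\bigtprod_{i\in K}a_{i,\cycle(i)}\enspace.
\]

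Collecting everything, the coefficient of $\X^k$ in $P_A$ is
\[
(\ominus\unit)^{n-k}\bigtsum_{\substack{K\subseteq[n]\\\card(K)=n-k}}\bigtsum_{\cycle\in\Sp(K)}\mathrm{sgn}(\cycle)\bigtprod_{i\in K}a_{i,\cycle(i)}
\;=\;(\ominus\unit)^{n-k}\bigtsum_{\substack{K\subseteq[n]\\\card(K)=n-k}}\det(A[K,K])\enspace,
\]
which is exactly $(\ominus\unit)^{n-k}\tr_{n-k}A$ by \Cref{def-trk}.

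The only delicate point is the bookkeeping in the reparameterization step: one must verify that, after the distributive expansion, the inner sum over $\permutation$ with $S\subseteq F_\permutation$ really ranges over \emph{all} permutations of $K$ (including those with fixed points in $K$) so as to reconstruct the full $\det(A[K,K])$; this is where the argument genuinely uses the fact that $\ominus$ interacts with $\odot$ exactly as a minus sign, so that the contributions from fixed and non-fixed points of $\cycle$ on $K$ merge into a single tropical determinant.
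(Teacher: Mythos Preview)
Your argument is correct and is exactly the standard Leibniz expansion followed by the usual regrouping by subsets. The paper actually states \Cref{comp_charpoly} without proof, treating it as a routine transposition of the classical identity to the symmetrized tropical setting; your sketch fills in precisely that routine computation, and there is nothing to compare.
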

\Cref{charpoly} is an example of computation of the $\smax$-characteristic polynomial by using \Cref{comp_charpoly}.
\begin{definition}[$\smax$ and $\smax^\vee$-algebraic eigenvalues and their multiplicity]\label{s_eig}
Let $A \in (\smax)^{n \times n}$. 
Then,
 the $\smax$-roots (resp.\ $\smax^\vee$-roots) of $P_A$ (see \Cref{def-smaxroots}) 
are called the \new{$\smax$ (resp.\ $\smax^\vee$)-algebraic eigenvalues} of $A$.
If the characteristic polynomial $P_A$ has coefficients in $\smax^\vee$, then 
the multiplicity of $\gamma$ as a $\smax^\vee$-root of $P_A$ is called the
\new{multiplicity} of $\gamma$ as a $\smax$ (or $\smax^\vee$)-algebraic eigenvalue of $A$.
\end{definition}
Here, we defined two different notions of eigenvalues of a matrix over $\smax$.
In \cite[Section 6]{adi}, ``eigenvalues over $\smax$'' were defined as the corner roots of the characteristic polynomial, which correspond to $\smax^\vee$-algebraic eigenvalues in our definition. 


\begin{definition}[$\smax$-geometric eigenvalues and eigenvectors]\label{eig_vec}
Let $A \in (\smax)^{n \times n}$. 
Let $ v \in (\smax^\vee)^{n}\setminus\{\zero\}$ and $\gamma\in \smax^\vee$.
We say that $v$ is a \new{$\smax$-eigenvector} of $A$ associated with the \new{$\smax$-geometric eigenvalue} $\gamma$ if 
\begin{equation}\label{smaxeigenvector}
A  v \balance  \gamma  v\enspace.\end{equation}
\end{definition}
Since the last equation is equivalent to 
$(A \ominus \gamma I)  v \balance  \zero$, the following property follows from
the property of homogeneous systems in $\smax$ 
recalled in \Cref{homo}.
\begin{theorem}\label{existence}
Let $A\in (\smax)^{n \times n}$ and  $\gamma\in \smax^\vee$.
Then, $\gamma$ is a $\smax$-algebraic eigenvalue
if and only if there exists a $\smax$-eigenvector $v\in (\smax^{\vee})^n\setminus\{\zero\}$ associated to $\gamma$:
$A v\balance \gamma v\enspace.$ \hfill \qed
\end{theorem}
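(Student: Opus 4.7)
The plan is to reduce the statement to a direct application of \Cref{homo} on homogeneous systems, after rewriting the balance eigenvalue equation in homogeneous form. The paper already hints at this approach in the sentence immediately preceding the theorem, so the proof should be quite short and mainly amounts to keeping track of signs and connecting the determinant condition with the characteristic polynomial definition.

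First, I would translate the defining equation of a $\smax$-eigenvector into a homogeneous balance. By \Cref{pro1}, the relation $A v \balance \gamma v$ is equivalent to $A v \ominus \gamma v \balance \zero$, which, using the distributivity of matrix operations and the definition of $\ominus$, becomes $(A \ominus \gamma I)\, v \balance \zero$. Hence the existence of a $\smax$-eigenvector $v \in (\smax^\vee)^n \setminus \{\zero\}$ associated with $\gamma$ is equivalent to the existence of a non-zero signed solution of the homogeneous system $B v \balance \zero$, where $B := A \ominus \gamma I$.

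Next, I would apply \Cref{homo} to the matrix $B$: such a non-zero signed solution exists if and only if $\det(B) \balance \zero$. It remains to identify $\det(B) \balance \zero$ with the condition that $\gamma$ be a $\smax$-algebraic eigenvalue. Observe that $A \ominus \gamma I = \ominus(\gamma I \ominus A)$, so that $\det(A \ominus \gamma I) = (\ominus \unit)^{n} \det(\gamma I \ominus A) = (\ominus \unit)^{n}\, \widehat{P}_A(\gamma)$ by \Cref{charpoly_s}. Since the balance relation is preserved by multiplication by $\ominus \unit$ (it is compatible with multiplication, and $\ominus\zero = \zero$), we obtain the equivalence
\[
\det(A \ominus \gamma I) \balance \zero \iff \widehat{P}_A(\gamma) \balance \zero.
\]

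Finally, by \Cref{def-smaxroots} and \Cref{s_eig}, the condition $\widehat{P}_A(\gamma) \balance \zero$ for $\gamma \in \smax^\vee$ is precisely the statement that $\gamma$ is a $\smax$-algebraic eigenvalue of $A$. Stringing these equivalences together yields the theorem. No real obstacle arises here: the only points requiring care are the sign factor $(\ominus\unit)^{n}$ in the determinant and the verification that $\ominus$ preserves the balance relation, both of which follow directly from the elementary properties recorded earlier in the section.
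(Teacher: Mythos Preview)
Your proposal is correct and follows precisely the approach indicated in the paper: reduce to the homogeneous system $(A\ominus\gamma I)v\balance\zero$ and invoke \Cref{homo}. The paper in fact gives no formal proof at all (the theorem ends with a \qed{} after the one-sentence remark preceding it), so your version simply spells out the same argument with the additional bookkeeping of the $(\ominus\unit)^n$ factor linking $\det(A\ominus\gamma I)$ to $\widehat{P}_A(\gamma)$.
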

This shows that $\gamma$ is a $\smax$-geometric eigenvalue
if and only if it is a $\smax$-algebraic eigenvalue, as in usual algebra.
Then $\gamma$ is called a \new{$\smax$-eigenvalue}.
Note however that, even when $P_A$ has coefficients in $\smax^\vee$, the
multiplicity of $\gamma$ as a $\smax^\vee$-geometric eigenvalue of $A$ 
is difficult to define since there are several notions of independence and thus
of dimension over $\smax$ (see for instance~\cite{akian2009linear}).

We can weaken or strengthen the notion of $\smax$-eigenvector as follows.
\begin{definition}\label{smaxeigenvector-ws}
Let $A \in (\smax)^{n \times n}$ and let $\gamma$ be a $\smax$-eigenvalue.
\begin{description}
\item[Weak eigenvector] If $v\in (\smax)^{n}$ has at least one coordinate in $\smax^\vee\setminus\{\zero\}$ and satisfies \eqref{smaxeigenvector} then we say that
$v$ is a \new{weak $\smax$-eigenvector}.
\item[Strong eigenvector] If $v\in (\smax^\vee)^{n}\setminus\{\zero\}$ 
satisfies $A  v = \gamma  v$, then we say that $v$ is a
\new{strong $\smax$-eigenvector} and that $\gamma$ is a \new{strong $\smax$-geometric eigenvalue}.
\end{description}
\end{definition}
Using the above definitions, we have that 
a strong $\smax$-eigenvector is necessarily a $\smax$-eigenvector, and
a $\smax$-eigenvector is necessarily a weak $\smax$-eigenvector.


\subsubsection{Some special $\smax$-eigenvectors}\label{spec-eig-vector}

One effective approach to compute a $\smax$-eigenvector associated to the $\smax$-eigenvalue $\gamma$ is to use the columns of the adjugate of the matrix $A \ominus \gamma I$. The following states this approach.

\begin{proposition}\label{lem-Bk}
Suppose that $A \in (\smax)^{n \times n}$, let $\gamma$ be a $\smax$-eigenvalue of $A$ and 
denote 
\[B=\gamma  I \ominus A \enspace .\]
Then
\begin{equation}\label{adj_vec}
A \,  B^{\mathrm{adj}}  \balance \gamma B^{\mathrm{adj}} 
 \enspace. \end{equation}
\end{proposition}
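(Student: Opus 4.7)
The plan is to reduce the desired balance \eqref{adj_vec} to the statement that $B \cdot B^{\mathrm{adj}} \balance \zero$, where $B \coloneqq \gamma I \ominus A$, and then to apply \Cref{adj}. The key observation is that $\det(B)$ is exactly the value of the characteristic polynomial at $\gamma$, which balances $\zero$ precisely because $\gamma$ is a $\smax$-eigenvalue.

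First, by the very definition of the $\smax$-characteristic polynomial function (\Cref{charpoly_s}), we have $\det(B) = \det(\gamma I \ominus A) = \widehat{P}_A(\gamma)$. Since $\gamma$ is a $\smax$-eigenvalue of $A$, it is also a $\smax$-algebraic eigenvalue by the discussion following \Cref{existence}; thus by \Cref{s_eig} and \Cref{def-smaxroots}, $\widehat{P}_A(\gamma) \balance \zero$, so $\det(B) \balance \zero$. Applying the ``in particular'' clause of \Cref{adj} to $B$ then yields $B \cdot B^{\mathrm{adj}} \balance \zero$. Finally, distributivity of matrix multiplication over the symmetrized sum gives
\[
B \cdot B^{\mathrm{adj}} \;=\; (\gamma I \ominus A)\, B^{\mathrm{adj}} \;=\; \gamma B^{\mathrm{adj}} \ominus A\, B^{\mathrm{adj}}\enspace,
\]
and applying the entrywise version of the equivalence \eqref{pro1}, namely $a\balance b \Leftrightarrow a\ominus b\balance \zero$, one obtains $A\, B^{\mathrm{adj}} \balance \gamma B^{\mathrm{adj}}$, which is \eqref{adj_vec}.

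The only delicate point is the invocation of \Cref{adj}, which as stated requires entries in $\smax^\vee$, whereas $B$ may have diagonal entries of the form $\gamma \ominus A_{ii}$ lying in $\smax^\circ$ (precisely when $\gamma$ coincides in modulus and sign with some $A_{ii}$). The remedy is either to appeal to a mild extension of \Cref{adj} to arbitrary matrices over $\smax$ (the classical row-repetition argument on Laplace expansion goes through verbatim in the balance sense), or to reprove the needed consequence directly: the off-diagonal entries of $B\cdot B^{\mathrm{adj}}$ are determinants of matrices with a repeated row and hence lie in $\smax^\circ$, while the diagonal entries equal $\det(B)$ up to a balance term, and both are balanced with $\zero$ by Step~1. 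Either way, the argument is a few lines once \Cref{adj} and \eqref{pro1} are available.
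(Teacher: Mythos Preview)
Your proof is correct and follows essentially the same route as the paper's: use \Cref{s_eig} to obtain $\det(B)\balance\zero$, invoke \Cref{adj} to get $B\,B^{\mathrm{adj}}\balance\zero$, and conclude via \eqref{pro1}. Your caution about the hypothesis $B\in(\smax^\vee)^{n\times n}$ in \Cref{adj} is in fact more careful than the paper itself, which applies the lemma to $B$ without comment; the identity $A\,A^{\mathrm{adj}}\succeq^{\circ}\det(A)I$ indeed holds over all of $\smax$ by the usual Laplace-expansion argument, so the extension you sketch is valid.
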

\begin{proof}
Since $\gamma$ is a $\smax$-eigenvalue of $A$, using \Cref{s_eig} we have 
$\det(B) \balance \zero$,  and by \Cref{adj}, we have 
\[B \, B^{\mathrm{adj}} \succeq^{\circ} \det(B) I \succeq^{\circ} \zero\enspace.\]
So
\[A \,  B^{\mathrm{adj}}   
\ominus \gamma  B^{\mathrm{adj}}   = B  B^{\mathrm{adj}}  
\balance \zero\enspace.\]
Then by \Cref{equality_balance}-\eqref{pro1}, we obtain \eqref{adj_vec}.
\end{proof}
Property \eqref{adj_vec} implies that all the columns of $B^{\mathrm{adj}}$ with at least one entry in $ \smax^\vee\setminus\{\zero\}$  are weak $\smax$-eigenvectors associated with the $\smax$-eigenvalue $\gamma$. In usual algebra, a necessary and sufficient condition to obtain an eigenvector in this way is that the  (geometric) eigenvalue be simple, or equivalently that the matrix $B$ has rank $n-1$. In $\smax$ a similar condition, namely that there exists at least one $n-1\times n-1$ minor of $B$ in $\smax^\vee\setminus\{\zero\}$, or equivalently
that $B^{\mathrm{adj}}$ has at least one entry in $\smax^\vee\setminus\{\zero\}$ is sufficient to obtain one weak $\smax$-eigenvector.
However, it may not be sufficient to obtain one $\smax$-eigenvector in this way.
Below we give a stronger condition which is sufficient.

 Let $C \in \smax^{n\times n}$. In the following by $C_{i,:}$ and $C_{:,j}$ we mean the $i$-th row of $C$ and the $j$-th column of $C$, respectively.
Moreover,  $C_{i,\hat{j}}$ (resp.\ $C_{\hat{i},j}$) stands for the submatrix
of  $C_{i,:}$ (resp.\ $C_{:,j}$) 
obtained by eliminating the $j$th column (resp.\ the $i$-th row).
Recall that $C[\hat{i},\hat{j}]$ is the submatrix of $C$ obtained by 
 eliminating the $i$-th row and the $j$th column.
 \begin{theorem}[A sufficient condition for geometric simple $\smax$-eigenvalue]\label{cond_unique}
Consider $A$, $\gamma$ and $B$ as in \Cref{lem-Bk},
and let $v$ be a $\smax$-eigenvector associated to the $\smax$-eigenvalue $\gamma$.
\begin{enumerate}
\item Assume that there exists an entry of $B^\adj$ which is invertible, that is
$B^\adj_{i,j}\in  \smax^{\vee}\setminus\{\zero\}$ for some $i,j\in [n]$. 
 Then, there exists $\lambda\in \smax^\vee\setminus\{\zero\}$ 
such that  $v\balance \lambda B^\adj_{:,j}$.
\item Assume there exists a column $j$ of $B^\adj$ that is non-zero and has only 
$\smax^\vee$ entries: 
$B^\adj_{:,j}\in  (\smax^{\vee})^{n} \setminus\{\zero\}$.
 Then $B^\adj_{:,j}$ is a $\smax$-eigenvector associated to the $\smax$-eigenvalue $\gamma$, and there exists $\lambda\in \smax^\vee\setminus\{\zero\}$  such that $v= \lambda  B^\adj_{:,j}$.
\end{enumerate}
\end{theorem}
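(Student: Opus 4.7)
The plan is to derive, from the eigenvalue balance $Bv \balance \zero$ where $B = \gamma I \ominus A$, a proportionality-type balance between $v$ and the column $B^{\adj}_{:,j}$. The key identity to prove is
\[ v_\ell\, B^{\adj}_{i,j} \balance v_i\, B^{\adj}_{\ell,j} \qquad \text{for every } \ell \in [n]\enspace, \]
after which (i) follows by dividing by the invertible scalar $B^{\adj}_{i,j}$, and (ii) follows by invoking \Cref{equality_balance}(2) to promote the resulting balance to an equality on signed components.

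To establish this identity, I would first drop the $j$-th equation from $Bv \balance \zero$ and move the $i$-th column to the right-hand side, obtaining the square subsystem
\[ B[\hat j,\hat i]\, v_{\hat i} \balance \ominus v_i\, B[\hat j,\{i\}]\enspace. \]
Since $B^{\adj}_{i,j} \in \smax^{\vee} \setminus \{\zero\}$ equals, up to the sign $(\ominus\unit)^{i+j}$, precisely $\det(B[\hat j,\hat i])$, this coefficient matrix is invertible and \Cref{cramer} applies. I would then write the $\ell$-th Cramer identity, recognize its right-hand side as a cofactor expansion of the determinant of the submatrix of $B[\hat j,:]$ whose $\ell$-th column has been replaced by its $i$-th column, and reorder columns to bring this back into natural order. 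Tracking the resulting sign, which depends on the relative order of $\ell$ and $i$, and combining with the sign in $\det(B[\hat j,\hat i]) = (\ominus\unit)^{i+j} B^{\adj}_{i,j}$ yields the desired identity after cancellation.

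Next I would rule out the case $v_i = \zero$: if it held, the reduced system would become homogeneous with an invertible coefficient matrix, and \Cref{homo} would force $v_{\hat i} = \zero$, contradicting $v \neq \zero$. Hence $\lambda := v_i\, (B^{\adj}_{i,j})^{-1}$ lies in $\smax^{\vee} \setminus \{\zero\}$, which completes (i). For (ii), the hypothesis $B^{\adj}_{:,j} \in (\smax^{\vee})^n$ implies that $\lambda B^{\adj}_{:,j}$ is in $(\smax^{\vee})^n$ as well, and since $v$ is too, \Cref{equality_balance}(2) upgrades the balance $v \balance \lambda B^{\adj}_{:,j}$ to an equality; that $B^{\adj}_{:,j}$ itself is a $\smax$-eigenvector is then immediate from the column relation $A B^{\adj}_{:,j} \balance \gamma B^{\adj}_{:,j}$ already established in \Cref{lem-Bk}.

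The main obstacle is the sign bookkeeping in the Cramer step: columns of an $\smax$-matrix cannot be reordered without tracking the effect through $\ominus$, and the subcases $\ell < i$ and $\ell > i$ must be handled separately yet conspire to produce the same cofactor $B^{\adj}_{\ell,j}$. A secondary point is that balances may only be promoted to equalities on $\smax^{\vee}$, which clarifies why the stronger conclusion of (ii) needs the \emph{entire} column of the adjugate, and not merely a single entry, to lie in $\smax^{\vee}$.
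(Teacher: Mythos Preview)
Your proposal is correct and follows essentially the same route as the paper: drop the $j$-th equation, move the $i$-th column to the right to obtain the $(n-1)\times(n-1)$ Cramer system $B[\hat j,\hat i]\,v_{\hat i}\balance \ominus v_i\,B_{\hat j,i}$, apply \Cref{cramer}, identify the resulting Cramer determinants with the cofactors $B^{\adj}_{\ell,j}$ up to sign, rule out $v_i=\zero$, and then for part~(ii) invoke \Cref{equality_balance} to upgrade the balance to an equality. The only difference is bookkeeping style: the paper conjugates $B$ by permutation matrices $P,Q$ so that the distinguished entry lands in position $(1,1)$ and the sign tracking becomes uniform, whereas you propose to work with the original indices and split into the cases $\ell<i$ and $\ell>i$; both lead to the same identity $v_\ell\,B^{\adj}_{i,j}\balance v_i\,B^{\adj}_{\ell,j}$.
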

\begin{proof}
First, $v$ is a $\smax$-eigenvector associated to the $\smax$-eigenvalue $\gamma$ if and only if $v$ satisfies:
\begin{equation}\label{equofeigenvector}
v\in  (\smax^{\vee})^{n} \setminus\{\zero\}\quad\text{and}
\quad B v\nabla \zero\enspace .\end{equation}
Moreover if $j\in [n]$ is such that
$B^\adj_{:,j}\in  (\smax^{\vee})^{n} \setminus\{\zero\}$, 
then, by \Cref{lem-Bk}, we know that $B^\adj_{:,j}$ is a $\smax$-eigenvector associated to the $\smax$-eigenvalue $\gamma$ and thus a solution of \eqref{equofeigenvector}.

Proof of i): Let $i,j\in [n]$ be such that
$B^\adj_{i,j}\in \smax^{\vee}\setminus\{\zero\}$.
Denote $F:=B[\hat{j},\hat{i}]$, $b:=B_{\hat{j},i}$.
Denote also $P$ and $Q$ the permutation matrices associated to the
cycles $(1,\ldots, i)$ and $(1,\ldots, j)$ respectively.
Then applying these permutations on $B$, we obtain:
\begin{equation}\label{bprime}
B':= QBP^{-1}=\begin{pmatrix} 
* & *\\
b & F\end{pmatrix}
\enspace.\end{equation}
Applying the corresponding permutation on $v$, we
obtain $v':= P v= \begin{pmatrix}
v_{i}\\ \tilde{v}
\end{pmatrix}$ where
$\tilde{v}$ is obtained by eliminating the $i$-th entry of $v$.
Then, we have:
\begin{equation}\label{main_equ1}
 B v\nabla \zero\Leftrightarrow B'v'\nabla \zero \Rightarrow F \tilde{v} \nabla \ominus v_i b
\enspace .\end{equation}

We claim that 
\begin{equation}\label{formula-adj2}
\begin{pmatrix}\det(F)\\ \ominus F^{\adj}  b 
\end{pmatrix}
= (\ominus \unit )^{i+j} P B^\adj_{:,j} \enspace .\end{equation}

Let us first assume that \eqref{formula-adj2} holds and 
show that any $\smax$-eigenvector $v$ associated to the $\smax$-eigenvalue $\gamma$, or equivalently any solution of \eqref{equofeigenvector} satisfies 
 $v\balance \lambda  B^\adj_{:,j}$ for some $\lambda\in \smax^\vee\setminus\{\zero\}$.


Indeed, by \eqref{main_equ1}, any solution $v$ of  \eqref{equofeigenvector}
satisfies necessarily the equation
 $F \tilde{v} \nabla \ominus v_i b$. Then, applying the
 first part of  \Cref{cramer}  (Cramer's theorem),
we deduce that $\det(F)  \tilde{v} \balance  F^{\adj}  (\ominus v_i b)
= \ominus v_i  F^{\adj} b$.
Since $B^\adj_{i,j}\in \smax^{\vee}\setminus\{\zero\}$, 
it is invertible, and it follows for instance from 
\eqref{formula-adj2} that $\det(F)= (\ominus \unit )^{i+j} B^\adj_{i,j}$
so is invertible. 
 So, $\tilde{v} \balance \det(F)^{ -1} (\ominus v_i  F^{\adj} b)$.
Using \eqref{formula-adj2}, we obtain that
$Pv \balance \det(F)^{ -1} v_i \begin{pmatrix}\det(F)\\ \ominus F^{\adj}  b 
\end{pmatrix}=  \det(F)^{ -1} v_i (\ominus \unit )^{i+j} P B^\adj_{:,j} $.
Therefore $v\balance \det(F)^{ -1} v_i (\ominus \unit )^{i+j} B^\adj_{:,j} $.
In particular, if $v_i=\zero$, then $v\balance \zero$ and so $v$ is 
not in $(\smax^{\vee})^{n} \setminus\{\zero\}$, a contradiction
with  \eqref{equofeigenvector}.
Therefore $v_i\in \smax^\vee\setminus\{\zero\}$, and we get that any
solution  $v$ of \eqref{equofeigenvector} 
satisfies $v\balance \lambda B^\adj_{:,j}$ for $\lambda=\det(F)^{ -1} v_i (\ominus \unit )^{i+j} \in \smax^\vee\setminus\{\zero\}$.

Let us now show our claim, that is \eqref{formula-adj2}.
First, we have that $(B')^\adj= (P^{-1})^{\adj}B^\adj Q^\adj
= \det(P)^{-1} P B^\adj \det(Q) Q^{-1}$ since
$P$ and $Q$ are invertible matrices (see for instance \cite[Cor.\ 2.35]{adi}).
Therefore, we have $(B')^\adj_{:,1}= (\ominus \unit)^{i+j} (P B^\adj)_{:, j}$,
which is the right hand side of  \eqref{formula-adj2}.
The coordinates of  $w=(B')^\adj_{:,1}$ are
$w_k=(B')^\adj_{k,1}=(\ominus \unit)^{k+1} \det (B'[\hat{1},\hat{k}])$,
$k\in [n]$.
Using \eqref{bprime}, we have clearly $w_1=\det(F)$.

For $k\in [n-1]$, let us denote by $F_k$ the matrix obtained from $F$ 
after replacing its $k$-th column with $b$.
Then, by \Cref{ith_cramer}, we have that $(F^\adj b)_k= \det(F_k)$.
Let $B'[\hat{1},:]$ be the matrix obtained from $B'$ after 
eliminating the first row, we have $B'[\hat{1},:]= \begin{pmatrix}b & F\end{pmatrix}$.
Since $b$ is the first column of this matrix, we have that 
$F_k$ can also be obtained from the matrix $B'[\hat{1},:]$
after  eliminating the $k+1$ column, then doing $k-1$ swaping from the first column to the $k$-th column.  So, $\det(F_k)=(\ominus \unit )^{k-1}\det(B'_{\hat{1},\widehat{k+1}})$ and therefore, we have 
\[ \ominus (F^\adj b)_k=\ominus \det(F_k)= (\ominus \unit)^{k}\det(B'_{\hat{1},\widehat{k+1}})= (B')^\adj_{k+1,1}\enspace .\]

Proof of ii): If now $B^\adj_{:,j}\in (\smax^\vee)^n\setminus\{\zero\}$,
with $(B^\adj)_{i,j}\neq \zero$, then Point i) shows that
$v\balance \lambda B^\adj_{:,j}$ for $\lambda=\det(F)^{ -1} v_i (\ominus \unit )^{i+j} \in \smax^\vee\setminus\{\zero\}$.
Since both sides of the balance equations are in $\smax^\vee$, the second part of \Cref{equality_balance} implies the equality, and so we get that
$v= \lambda B^\adj_{:,j}$, which finishes the proof of \eqref{formula-adj2}.
Note that this second part of the theorem can also be shown using
the second part of  \Cref{cramer}  (Cramer's theorem).
\end{proof}

\begin{theorem}\label{cond_existence}
Let $A$, $\gamma$ and $B$ as in \Cref{lem-Bk}. Assume that there exists an entry of $B^\adj$ which is non-zero, that is
$B^\adj_{i,j}\neq \zero$ for some $i,j\in [n]$.  Then there exists a $\smax$-eigenvector $v$ associated to the $\smax$-eigenvalue $\gamma$ such that $|v|=|B^{\adj}_{:,j}|$ and $v_i=B^{\adj}_{i,j}$ for all $i\in [n]$ satisfying $B^{\adj}_{i,j}\in\smax^\vee$.
\end{theorem}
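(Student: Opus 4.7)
The plan is to exhibit the eigenvector $v$ by starting from the column $w := B^{\adj}_{:,j}$ and replacing each balance entry of $w$ by an arbitrary signed element of the same modulus, while leaving the signed entries untouched. The starting observation is that $Bw \balance \zero$: since $\gamma$ is a $\smax$-eigenvalue, $\det(B) = \widehat{P}_A(\gamma) \balance \zero$, and applying \Cref{adj} to the $j$-th column of $BB^{\adj} \succeq^\circ \det(B) I$ gives $(Bw)_k = \det(B)\delta_{kj} \oplus c_k$ with $c_k \in \smax^\circ$, hence $(Bw)_k \in \smax^\circ$ for every $k$ (using that $\det(B) \in \smax^\circ$).

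Next, I would define $v$ coordinate-wise: set $v_\ell := w_\ell$ whenever $w_\ell \in \smax^\vee$, and when $w_\ell \in \smax^\circ \setminus \{\zero\}$ pick any element of $\smax^\vee$ of modulus $|w_\ell|$, for instance $v_\ell := |w_\ell|$. By construction, $v \in (\smax^\vee)^n$, $|v| = |B^{\adj}_{:,j}|$, and $v_\ell = B^{\adj}_{\ell,j}$ on every coordinate $\ell$ where $B^{\adj}_{\ell,j} \in \smax^\vee$. Moreover $v \neq \zero$ because $|v_i| = |B^{\adj}_{i,j}| \neq \zero$ by hypothesis.

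The core step is to verify $Bv \balance \zero$, which will make $v$ a $\smax$-eigenvector. I would argue row by row, writing
\[
(Bv)_k \;=\; (Bw)_k \,\oplus\, \bigtsum_{\ell} B_{k\ell}\,(v_\ell \ominus w_\ell)\enspace .
\]
The key observation is that each difference $v_\ell \ominus w_\ell$ lies in $\smax^\circ$: this is trivial when $v_\ell = w_\ell$, and for $v_\ell \in \smax^\vee$, $w_\ell \in \smax^\circ$ of common modulus $c$, a short direct calculation in $\tmax^2$ gives $v_\ell \ominus w_\ell = c^\circ$. Since $\smax^\circ$ is closed under $\oplus$ and absorbs products with arbitrary elements of $\smax$, each term $B_{k\ell}(v_\ell \ominus w_\ell)$ is balance, so is the whole sum; combined with $(Bw)_k \in \smax^\circ$, this yields $(Bv)_k \in \smax^\circ$, i.e.\ $(Bv)_k \balance \zero$.

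The only delicate point is the identity $v_\ell \ominus w_\ell \in \smax^\circ$ in the case $w_\ell \in \smax^\circ$; this is really what makes the construction succeed, as it allows us to trade balance entries of $B^{\adj}_{:,j}$ for arbitrary signed entries of the same modulus without disturbing the balance equation. Note that this approach differs in spirit from the Cramer-based reduction of \Cref{cond_unique}, which requires the invertibility of $\det(F) = (\ominus\unit)^{i+j} B^{\adj}_{i,j}$ and is therefore unavailable when the chosen entry $B^{\adj}_{i,j}$ happens to be a nonzero balance element; the present argument works directly with $B$ by exploiting the ideal property of $\smax^\circ$.
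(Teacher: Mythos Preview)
Your argument breaks at the displayed identity
\[
(Bv)_k \;=\; (Bw)_k \,\oplus\, \bigtsum_{\ell} B_{k\ell}\,(v_\ell \ominus w_\ell)\enspace .
\]
This would be valid in a ring, but $\smax$ has no genuine subtraction: $a\ominus a=a^{\circ}$, not $\zero$. If you expand the right-hand side honestly you obtain
\[
(Bw)_k \oplus (Bv)_k \ominus (Bw)_k \;=\; (Bv)_k \oplus \big((Bw)_k\big)^{\circ}\enspace,
\]
and since $|(Bv)_k|=|(Bw)_k|$ and $(Bw)_k\in\smax^{\circ}$, this last expression is \emph{always} balance, regardless of whether $(Bv)_k$ itself lies in $\smax^{\circ}$. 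So showing the right-hand side is balance tells you nothing about $(Bv)_k$. Equivalently: you are implicitly using transitivity of $\balance$ through $Bw\in(\smax^{\circ})^n$, and $\balance$ is precisely \emph{not} transitive through balance elements.

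A concrete failure of your construction: take $A=\begin{pmatrix}2&0\\0&0\end{pmatrix}$, $\gamma=0$, so $B=\begin{pmatrix}\ominus 2&\ominus 0\\\ominus 0&0^{\circ}\end{pmatrix}$ and $w=B^{\adj}_{:,1}=\begin{pmatrix}0^{\circ}\\0\end{pmatrix}$. Your recipe replaces $w_1=0^{\circ}$ by, say, $v_1=0$, giving $v=(0,0)^T$. Then $(Bv)_1=(\ominus 2)\odot 0\,\oplus\,(\ominus 0)\odot 0=\ominus 2$, which is signed, not balance. So an arbitrary choice of sign on a balance coordinate need not produce an eigenvector; the sign has to be chosen compatibly with the remaining equations, and this is exactly the nontrivial content supplied by \Cref{existence_signed} in the paper's route. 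The paper does not attempt a direct ``perturb the balance entries'' argument; instead it eliminates row $j$ and column $i$, reduces to an $(n{-}1)\times(n{-}1)$ system $F\tilde v\balance \ominus v_i b$ with $\det(F)=(\ominus\unit)^{i+j}B^{\adj}_{i,j}\neq\zero$, and then invokes \Cref{existence_signed} to produce a signed solution of the prescribed modulus, finally using Point~(i) of \Cref{cond_unique} together with \Cref{equality_balance} to pin down the coordinates where $B^{\adj}_{i,j}\in\smax^\vee$.
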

\begin{proof}
Using the same arguments and notations
 as in the proof of first point of \Cref{cond_unique},
we have that $v$ is a $\smax$-eigenvector $v$ associated to the $\smax$-eigenvalue $\gamma$ if and only the vector $\tilde{v}$ satisfies \eqref{main_equ1}.
Moreover, $\det(F)= (\ominus \unit )^{i+j} B^\adj_{i,j}$, so that 
$\det(F)\neq \zero$. 
Applying \Cref{existence_signed}, we get that for any $v_i\in\smax^\vee$,
there exists $\tilde{v}$ satisfying \eqref{main_equ1}
and
$|\tilde{v}|=|\det(F)|^{-1} |F^\adj ( \ominus v_i b)|$.
Using again the same arguments as in the proof of  first point of \Cref{cond_unique}, we deduce that
$|Pv|=|v_i| |\det(F)|^{-1} |P B^\adj_{:,j}|$.
Since $P$ is a permutation matrix, choosing $v_i= |\det(F)|$,
we obtain $|v|= |B^\adj_{:,j}|$.

Now by the first point of \Cref{cond_unique}, we know that there exists
$\lambda\in\smax^\vee\setminus\{\zero\}$ such that
$v\balance \lambda B^\adj_{:,j}$. If there exists $i\in [n]$
such that $B^{\adj}_{i,j}\in\smax^\vee\setminus\{\zero\}$, then 
by the second point of \Cref{equality_balance}, we have 
$v_i=\lambda B^{\adj}_{i,j}$ and since $|v_i|=|B^{\adj}_{i,j}|$, 
we deduce that $\lambda=\unit$ or $\ominus\unit$.
Replacing $v$ by $\lambda^{-1} v$, we get that $v$ is a
$\smax$-eigenvector $v$ associated to the $\smax$-eigenvalue $\gamma$
and is such that $v\balance B^\adj_{:,j}$ and $|v|= |B^\adj_{:,j}|$.
Using again  the second point of \Cref{equality_balance}, we deduce that
 $v_i=B^{\adj}_{i,j}$ for all $i\in [n]$ such that $B^{\adj}_{i,j}\in\smax^\vee$.
\end{proof}
\section{Tropical positive (semi-)definite matrices and their eigenvalues}\label{sec:3}
Tropical positive semi-definite matrices were introduced in \cite{yu2015tropicalizing} and generalized in \cite{tropicalization}. Here we consider also tropical  positive definite matrices. 

\subsection{Tropical positive (semi-)definite matrices}
\begin{definition}[$\pd$ and $\psd$ matrices, compared with \cite{tropicalization}]\label{def:psd}
Let $A=(a_{ij} ) \in (\smax^\vee)^{n \times n}$ be a symmetric matrix. It is said to be \new{tropical positive definite} ($\pd$) if 
  \begin{equation}\label{def_pd}\zero \lsign x^{T} A  x,\;
\text{that is}\; x^{T} A  x \in \smax^{\oplus}\setminus\{\zero\},\;
\text{for all}\; x \in (\smax^{\vee})^{n}\setminus\{\zero\}\enspace.\end{equation}
  If the strict inequality required in \Cref{def_pd} is weekened to $\zero \leqsign x^{T} A  x$, then $A$ is said to be \new{tropical positive semi-definite} ($\psd$). 
\end{definition}
Throughout the paper, the set of $n\times n$ $\pd$ and $\psd$ matrices over $\smax^{\vee}$, are denoted by $\pd_n(\smax^{\vee})$ and $\psd_n(\smax^{\vee})$, respectively. Therefore we have $\pd_n(\smax^{\vee}) \subseteq \psd_n(\smax^{\vee})$.

We recall in \Cref{def_psd1} below 
the characterization of tropical positive definite matrices 
shown in \cite{tropicalization}. 


\begin{theorem}[\cite{tropicalization}]\label{def_psd1}
The set $\psd_{n}(\smax^\vee)$ is equal to the set 
\[ 
 \{A=(a_{ij}) \in (\smax^{\vee})^{n \times n} : \zero \leqsign a_{ii}\; \forall i \in [n],\; a_{ij}=a_{ji} \;\text{and}\; a_{ij}^{ 2} \leqsign a_{ii} a_{jj}\; \forall i,j \in [n], i \neq j\}\enspace . \]
\end{theorem}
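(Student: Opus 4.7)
The plan is to prove both inclusions of the set equality.

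For the necessity direction, suppose $A\in\psd_n(\smax^\vee)$. Symmetry $a_{ij}=a_{ji}$ is already built into the definition of $\psd_n$. Testing $\zero\leqsign x^T A x$ on $x=e_i$ (the standard basis vector with $\unit$ in position $i$ and $\zero$ elsewhere) yields $x^T A x=a_{ii}$, so $\zero\leqsign a_{ii}$ and, since $a_{ii}\in\smax^\vee$, necessarily $a_{ii}\in\smax^\oplus$. For the off-diagonal bound I would argue by contradiction: if $a_{ij}^{ 2}\not\leqsign a_{ii}a_{jj}$ for some $i\neq j$, then all three elements lie in $\smax^\oplus$, so \Cref{order-exp} translates this failure into the strict tropical inequality $|a_{ii}|\odot|a_{jj}|<|a_{ij}|^{ 2}$ in $\tmax$; in particular $a_{ij}\neq\zero$. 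I would then construct a ``bad'' vector $x$ supported only on coordinates $i$ and $j$: setting $x_i=\alpha$ and $x_j=\beta$ in $\smax^\vee\setminus\{\zero\}$, symmetry of $A$ and idempotency of $\oplus$ give
\[x^T A x = \alpha^{ 2} a_{ii}\oplus \beta^{ 2} a_{jj}\oplus \alpha\beta a_{ij}.\]
Divisibility of $\vgroup$ lets me choose the magnitudes so that $|\alpha\beta a_{ij}|$ strictly dominates both $|\alpha^{ 2} a_{ii}|$ and $|\beta^{ 2} a_{jj}|$ (this is exactly where the strict tropical inequality is used, via a value of $|\beta|/|\alpha|$ placed strictly between $|a_{ii}|/|a_{ij}|$ and $|a_{ij}|/|a_{jj}|$). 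Picking signs of $\alpha,\beta$ so that $\alpha\beta a_{ij}\in\smax^\ominus$ (always possible since $a_{ij}\in\smax^\vee\setminus\{\zero\}$) makes the dominant term negative, forcing $x^T A x\in\smax^\ominus$, which contradicts the $\psd$ assumption.

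For the sufficiency direction, fix any $x\in(\smax^\vee)^n\setminus\{\zero\}$ and expand
\[x^T A x=\bigtsum_{i} x_i^{ 2} a_{ii}\;\oplus\;\bigtsum_{i\neq j} x_i x_j a_{ij}.\]
Each diagonal contribution $x_i^{ 2} a_{ii}$ lies in $\smax^\oplus$ since $x_i^{ 2},a_{ii}\in\smax^\oplus$. Set $M=\max_k|x_k^{ 2} a_{kk}|$. The hypothesis $|a_{ij}|^{ 2}\leq|a_{ii}||a_{jj}|$ gives
\[|x_i x_j a_{ij}|^{ 2}=|x_i|^{ 2}|x_j|^{ 2}|a_{ij}|^{ 2}\leq (|x_i|^{ 2}|a_{ii}|)(|x_j|^{ 2}|a_{jj}|)\leq M^{ 2},\]
so $|x_i x_j a_{ij}|\leq M$. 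Hence $M$ is the maximal modulus of any term in the expansion of $x^T A x$, and this maximum is attained by at least one (positive) diagonal term. Since lower-modulus contributions cannot alter the $\smax$-equivalence class of the sum, $x^T A x\in\smax^\oplus\cup\smax^\circ$, i.e.\ $\zero\leqsign x^T A x$, as required.

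The main obstacle lies in the necessity construction when one of $|a_{ii}|$ or $|a_{jj}|$ is $\zero$: the corresponding ratio $|a_{ii}|/|a_{ij}|$ or $|a_{ij}|/|a_{jj}|$ then becomes $\zero$ or undefined, and I must verify that the admissible window for $|\beta|/|\alpha|$ remains nonempty. In each such case the one-sided constraint becomes either vacuous or only requires a strictly positive value, which exists by divisibility of $\vgroup$; the degenerate trivial-$\vgroup$ case forces both $a_{ii}$ and $a_{jj}$ to vanish in the contradictory scenario, so that $|\alpha|=|\beta|=\unit$ together with sign control on $\alpha\beta a_{ij}$ suffices to derive the same contradiction.
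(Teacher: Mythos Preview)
The paper does not prove \Cref{def_psd1}; it is quoted from \cite{tropicalization}. What the paper does prove, in the appendix, is the strict analogue \Cref{def_pd1} for $\pd$ matrices, via a preliminary $2\times 2$ lemma (\Cref{lemma_sergey_pd}). Your argument is close in spirit to that appendix proof, though organised differently. For necessity the appendix evaluates the form at the concrete test vector $(a_{ii}^{-1/2},\,\eta\, a_{jj}^{-1/2})$ with $\eta\in\{\unit,\ominus\unit\}$ and reads off the off-diagonal bound directly; you instead argue by contradiction, choosing magnitudes so that the cross term strictly dominates. For sufficiency the appendix first establishes the $2\times 2$ case and then sums the resulting inequalities over all pairs $i\neq j$; you bypass that lemma and compare every term's modulus directly against the diagonal maximum $M$. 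Your sufficiency argument is a bit more direct; your necessity argument is more flexible (it copes with $a_{ii}=\zero$ or $a_{jj}=\zero$, which the test vector $a_{ii}^{-1/2}$ cannot handle and which can occur in the semi-definite case), at the price of needing an element of $\vgroup$ strictly between two given ones.

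One claim in your degenerate-case discussion is wrong. For trivial $\vgroup$ you assert that $a_{ij}^{2}\not\leqsign a_{ii}a_{jj}$ forces \emph{both} $a_{ii}$ and $a_{jj}$ to vanish, but it only forces their product to vanish. Over $\vgroup=\{0\}$ the symmetric matrix $A=\bigl(\begin{smallmatrix}\unit&\unit\\ \unit&\zero\end{smallmatrix}\bigr)$ has $a_{12}^{2}=\unit\not\leqsign\zero=a_{11}a_{22}$, yet $x^{T}Ax=x_1^{2}\oplus x_1x_2\in\smax^\oplus\cup\smax^\circ$ for every signed $x$, so $A$ is $\psd$ and no contradiction can be reached. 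This is a failure of the statement itself over the trivial group rather than of your method; the necessity argument genuinely needs $\vgroup$ non-trivial so that the divisible order is dense. With that caveat your proof is correct.
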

Using \Cref{def_psd1}, one can obtain the following similar result for $\pd$ matrices. We give a detailed proof in Appendix.
\begin{theorem}\label{def_pd1}
The set $\pd_{n}(\smax^\vee)$ is equal to the set 
\[ 
 \{A=(a_{ij}) \in (\smax^{\vee})^{n \times n} : \zero \lsign a_{ii}\; \forall i \in [n],\; a_{ij}=a_{ji} \;\text{and}\; a_{ij}^{ 2} \lsign a_{ii} a_{jj}\; \forall i,j \in [n], i \neq j\}\enspace . \]
\end{theorem}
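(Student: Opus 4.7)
The plan is to mirror the structure of \Cref{def_psd1} while upgrading all weak inequalities to strict ones. Since $\pd_n(\smax^\vee)\subseteq \psd_n(\smax^\vee)$, the symmetry of $A$ and the weak versions $\zero\leqsign a_{ii}$ and $a_{ij}^2\leqsign a_{ii}a_{jj}$ come for free from \Cref{def_psd1}, so the work is entirely in handling strictness.

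For the forward implication ($A\in\pd_n(\smax^\vee)$ implies the strict inequalities), I first test with $x=e_i$ (the $i$-th standard basis vector), which gives $x^T A x = a_{ii}$; this must lie in $\smax^\oplus\setminus\{\zero\}$, yielding $\zero\lsign a_{ii}$. For the off-diagonal strict bound I argue by contradiction: suppose $a_{ij}^2\not\lsign a_{ii}a_{jj}$ for some $i\neq j$. Combined with the $\psd$ bound, this forces $a_{ii}a_{jj}\ominus a_{ij}^2\in\smax^\circ$, which (since both operands are in $\smax^\oplus\setminus\{\zero\}$) means $|a_{ij}|^2=|a_{ii}||a_{jj}|$, or equivalently $2|a_{ij}|=|a_{ii}|+|a_{jj}|$ in the underlying ordered group. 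Using divisibility of $\vgroup$, I construct a test vector supported on $\{i,j\}$ with $x_i=\unit$ and $|x_j|=(|a_{ii}|-|a_{jj}|)/2$, choosing the sign of $x_j$ so that $a_{ij}x_ix_j\in\smax^\ominus$. Expanding the quadratic form (the two symmetric off-diagonal orderings collapse by idempotency of $\oplus$) gives
\[ x^T A x \;=\; a_{ii}x_i^{\,2}\,\oplus\, a_{jj}x_j^{\,2}\,\oplus\, a_{ij}x_ix_j, \]
and a direct modulus computation shows all three terms have the same modulus, with the first two in $\smax^\oplus$ and the third in $\smax^\ominus$. Their sum therefore lands in $\smax^\circ\setminus\{\zero\}$, contradicting $A\in\pd_n(\smax^\vee)$.

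For the reverse implication, fix $x\in(\smax^\vee)^n\setminus\{\zero\}$ and expand
\[ x^T A x \;=\; \bigtsum_{i}a_{ii}x_i^{\,2}\,\oplus\,\bigtsum_{i<j}a_{ij}x_ix_j. \]
Every diagonal term $a_{ii}x_i^{\,2}$ lies in $\smax^\oplus$ (since $a_{ii}\in\smax^\oplus$ and $x_i^{\,2}\in\smax^\oplus$), with modulus $|a_{ii}|+2|x_i|$. For each cross term, combining the strict hypothesis $2|a_{ij}|<|a_{ii}|+|a_{jj}|$ with the arithmetic--maximum inequality $\tfrac12(|a_{ii}|+2|x_i|)+\tfrac12(|a_{jj}|+2|x_j|)\leq\max(|a_{ii}|+2|x_i|,\,|a_{jj}|+2|x_j|)$ gives $|a_{ij}x_ix_j|<\max(|a_{ii}x_i^{\,2}|,|a_{jj}x_j^{\,2}|)$. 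Hence every cross term is strictly dominated (in modulus) by some diagonal term, so the maximum modulus in $x^T A x$ is attained only by diagonal contributions; since those all belong to $\smax^\oplus\setminus\{\zero\}$ (and equal-modulus positive elements add to a positive element of the same modulus, no cancellation), \Cref{property-preceq} implies $x^T A x\in\smax^\oplus\setminus\{\zero\}$, i.e.\ $\zero\lsign x^T A x$.

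The only delicate step is the contradiction-vector construction in the forward direction: one must simultaneously align three moduli (using equality in the AM--max inequality) and pick the sign of $x_j$ to force cancellation into $\smax^\circ$. Everything else is bookkeeping with the absolute value morphism (\Cref{prop-modulus}) and the order identifications of \Cref{order-exp}.
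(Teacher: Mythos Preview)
Your proof is correct and follows essentially the same strategy as the paper's, just packaged differently. The paper isolates the $2\times 2$ case as a separate lemma and, for the forward direction, tests directly with $(a_{ii}^{-1/2},\,\eta\,a_{jj}^{-1/2})$ rather than arguing by contradiction; up to a global scaling this is exactly your contradiction vector, and your backward modulus--domination argument is what the paper's $2\times 2$ proof amounts to after its normalizing change of variables, the general $n$ case then being obtained by summing the $2\times 2$ positivity over all pairs $\{i,j\}$.
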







Note that, in the above characterizations of $\psd$ and $\pd$ matrices, 
the inequalities involve diagonal entries or the square of non-diagonal entries,
which are all elements of $\smax^{\oplus}$.

\subsection{The $\smax$-characteristic polynomial of $\psd$ and $\pd$ matrices}
The following result will help us to compute the characteristic polynomial.
\begin{theorem}\label{trace}
Let $A \in \psd_n(\smax^{\vee})$ with the diagonal elements $d_n \leqsign \cdots \leqsign d_1$. Then, we have 
$\tr_k A=   \bigtprod_{i\in [k]}d_i \;\text{or} \;\tr_kA  =( \bigtprod_{i\in [k]}d_i)^{\circ}$, so $\tr_k A\geq 0$,
 and for $A \in \pd_n(\smax^{\vee})$ we have  
 $\tr_kA=   \bigtprod_{i\in [k]}d_i> 0$.
\end{theorem}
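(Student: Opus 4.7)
My strategy is to expand the trace by permutations and bound each term via the defining inequality of $\psd$ matrices. By \Cref{def-compound} and \Cref{def-trk},
\[ \tr_k A \;=\; \bigtsum_{\substack{K\subseteq[n]\\ \card(K)=k}} \det(A[K,K]) \;=\; \bigtsum_{\substack{K\subseteq[n]\\ \card(K)=k}} \bigtsum_{\pi\in\Sp_K} \mathrm{sgn}(\pi)\, w(\pi), \]
with $w(\pi)=\bigtprod_{i\in K} a_{i\pi(i)}\in\smax^\vee$ (since $\smax^\vee$ is closed under multiplication). The identity permutation contributes $+\bigtprod_{i\in K} a_{ii}\in \smax^\oplus$ to $\det(A[K,K])$, and the crux of the proof is to show that every other permutation weight is bounded in modulus by this value.

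The key estimate is a cycle-level bound. For a cycle $\sigma=(i_1,\ldots,i_\ell)$ of length $\ell\geq 2$ appearing in $\pi$, multiplying the inequalities $|a_{i_j i_{j+1}}|^{2}\leqsign a_{i_j i_j} a_{i_{j+1} i_{j+1}}$ from \Cref{def_psd1} over $j\in[\ell]$ and using that $|\cdot|$ is a semiring morphism (\Cref{prop-modulus}) yields
\[ |w(\sigma)|^{2} \;\leqsign\; \Bigl(\bigtprod_{j\in[\ell]} a_{i_j i_j}\Bigr)^{2}, \]
which by \Cref{modulus_order} gives $|w(\sigma)|\leqsign \bigtprod_{j\in[\ell]} a_{i_j i_j}$. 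Multiplying over the disjoint cycle decomposition of an arbitrary $\pi\in\Sp_K$ (fixed points contributing the corresponding diagonal entry trivially) gives $|w(\pi)|\leqsign \bigtprod_{i\in K} a_{ii}$. Under the $\pd$ hypothesis and \Cref{def_pd1}, every inequality can be taken strict, so that $|w(\pi)|\lsign \bigtprod_{i\in K} a_{ii}$ for any $\pi\neq\mathrm{id}$.

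Combining these bounds with the absorption rules of $\smax$ allows me to conclude. Each $\det(A[K,K])$ is the sum of $+\bigtprod_{i\in K} a_{ii}$ with terms of the form $\mathrm{sgn}(\pi)w(\pi)\in\smax^\vee$ whose moduli do not exceed $\bigtprod_{i\in K} a_{ii}$; by \Cref{property-preceq} the sum therefore equals $\bigtprod_{i\in K} a_{ii}$ (if no opposite-sign term of equal modulus appears) or the balance class $(\bigtprod_{i\in K} a_{ii})^\circ$ (otherwise). Then summing over $K$ and using the ordering $d_n\leqsign\cdots\leqsign d_1$, the dominant contributions are precisely those with $\bigtprod_{i\in K} a_{ii} = \bigtprod_{i\in[k]} d_i$, each equal to that value or its balance. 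The same absorption pattern applied once more gives $\tr_k A\in\{\bigtprod_{i\in[k]} d_i,\,(\bigtprod_{i\in[k]} d_i)^\circ\}$ and hence $\tr_k A\geqsign \zero$. In the $\pd$ case, strictness eliminates every balance, so $\det(A[K,K])=\bigtprod_{i\in K} a_{ii}$ for all $K$, and since each $d_i\gsign \zero$ we obtain $\tr_k A=\bigtprod_{i\in[k]} d_i \gsign \zero$.

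The main delicacy is not the inequality itself, but the book-keeping of the balance case: one must verify that only the two outcomes listed in the statement can occur, which relies on the identity permutation being the unique source of a positive contribution of maximal modulus in each principal minor. No deeper ingredient is needed beyond the submultiplicativity coming from \Cref{def_psd1}, \Cref{prop-modulus}, \Cref{modulus_order} and the standard absorption/idempotency rules of $\smax$.
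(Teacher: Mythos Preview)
Your proof is correct and follows essentially the same approach as the paper. The paper isolates your cycle bound and its extension to permutations as two separate lemmas (\Cref{diag_cycle} and \Cref{diag_cycle2}) before the main proof, and at the end argues slightly more directly via $|\tr_k A|=\bigtprod_{i\in[k]}d_i$ together with $\tr_k A\succeq \bigtprod_{i\in[k]}d_i$, but the underlying mechanism---the \Cref{def_psd1} inequality squared along each cycle, \Cref{modulus_order}, and absorption by the identity-permutation term---is identical to yours.
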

The proof will follows from the following lemmas.
\begin{lemma}\label{diag_cycle}
Let $A=(a_{ij}) \in \psd_n(\smax^{\vee})$. 
Let $\cycle$ be a cycle $(j_{1},j_{2},\ldots ,j_{k})$ of length $k>1$ in $[n]$
and let us denote by $[\cycle]=\{j_{1},j_{2},\ldots ,j_{k}\}$ the set of its elements. Then 
\begin{enumerate}
\item $|w(\cycle)| \leqsign \bigtprod_{i\in [\cycle]}a_{ii}.$
\item Moreover, if $A\in \pd_n(\smax^{\vee})$ we have 
$|w(\cycle)| \lsign \bigtprod_{i\in [\cycle]}a_{ii}$.
\end{enumerate}
\end{lemma}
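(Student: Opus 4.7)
The plan is to square both sides and apply the defining pairwise inequality for $\psd$/$\pd$ matrices from \Cref{def_psd1,def_pd1}, then descend back to the modulus using \Cref{modulus_order}.

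First I would observe that since the modulus is a morphism of semirings (\Cref{prop-modulus}) and since $a^2 = |a|^2$ for every $a \in \smax^{\vee}$ (because $a$ is either $|a|$ or $\ominus |a|$), we have
\[
|w(\cycle)|^2 \;=\; \bigtprod_{\ell\in[k]} |a_{j_\ell j_{\ell+1}}|^2 \;=\; \bigtprod_{\ell\in[k]} a_{j_\ell j_{\ell+1}}^{\,2}.
\]
By \Cref{def_psd1}, each factor satisfies $a_{j_\ell j_{\ell+1}}^{\,2} \leqsign a_{j_\ell j_\ell}\, a_{j_{\ell+1} j_{\ell+1}}$, and all quantities involved lie in $\smax^{\oplus}$. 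Multiplying these $k$ inequalities one at a time via \Cref{product_order}(1) (legal because all factors are $\geqsign \zero$) gives
\[
|w(\cycle)|^2 \;\leqsign\; \bigtprod_{\ell\in[k]} a_{j_\ell j_\ell}\, a_{j_{\ell+1} j_{\ell+1}} \;=\; \Bigl(\bigtprod_{i\in[\cycle]} a_{ii}\Bigr)^{2},
\]
where the last equality uses that in going once around the cycle, each index $i\in[\cycle]$ appears exactly twice, once as $j_\ell$ and once as $j_{\ell+1}$. Since both sides lie in $\smax^{\oplus}\subseteq \smax^{\vee}$, \Cref{modulus_order} yields $|w(\cycle)| \leqsign \bigtprod_{i\in[\cycle]} a_{ii}$, proving (i).

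For (ii), I would split into two cases. If some $a_{j_\ell j_{\ell+1}} = \zero$, then $w(\cycle) = \zero$, while the right-hand side $\bigtprod_{i\in[\cycle]} a_{ii}$ is strictly positive by \Cref{def_pd1}, giving the strict inequality trivially. Otherwise every $a_{j_\ell j_{\ell+1}}^{\,2} > \zero$, so we may apply the strict version \Cref{product_order}(2) to multiply the strict inequalities $a_{j_\ell j_{\ell+1}}^{\,2} \lsign a_{j_\ell j_\ell}\, a_{j_{\ell+1} j_{\ell+1}}$ supplied by \Cref{def_pd1}; the ``positive multiplier'' hypothesis is satisfied at each step because every factor on either side is a strictly positive element of $\smax^{\oplus}$. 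This gives $|w(\cycle)|^2 \lsign (\bigtprod_{i\in[\cycle]} a_{ii})^2$, and \Cref{modulus_order} again yields the desired strict inequality on the moduli.

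The main subtlety I expect is the strict-inequality propagation in (ii): multiplying strict inequalities in $\smax$ requires the opposing factor to be strictly positive, which is why the case split on whether an off-diagonal vanishes is needed. The non-strict case (i) is cleaner since \Cref{product_order}(1) only needs nonnegativity, which is automatic in $\smax^{\oplus}$.
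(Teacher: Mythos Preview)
Your proposal is correct and follows essentially the same route as the paper's proof: square, apply the pairwise $\psd$/$\pd$ inequalities from \Cref{def_psd1}/\Cref{def_pd1}, multiply via \Cref{product_order}, and descend with \Cref{modulus_order}. Your case split in (ii) on whether some off-diagonal entry vanishes is in fact more careful than the paper, which simply says Part~2 follows ``similarly'' without isolating the positivity hypothesis needed for \Cref{product_order}(2).
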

\begin{proof}
{\bf Proof of Part 1}: Let $\cycle$ be the cycle $(j_{1},j_{2},\ldots ,j_{k})$. Since $A \in \psd_n(\smax^{\vee})$ by  \Cref{def_psd1} we have
\[\begin{array}{ccc}
a_{j_1j_2}^{ 2}&\leqsign& a_{j_1j_1} a_{j_2j_2}\enspace,\\
a_{j_2j_3}^{ 2}&\leqsign &a_{j_2j_2}  a_{j_3j_3}\enspace,\\
&\vdots&\\
a_{j_kj_1}^{ 2}&\leqsign& a_{j_kj_k} a_{j_1j_1}\enspace.\end{array}
\]
So, by the first part of \Cref{product_order} we have
$ a_{j_1j_2}^{ 2} a_{j_2j_3}^{ 2}  \cdots  a_{j_kj_1}^{ 2} \leqsign a_{j_1j_1}^{ 2} a_{j_2j_2}^{ 2}  \cdots  a_{j_kj_k}^{ 2}\enspace$.
Finally, using \Cref{modulus_order},
\begin{eqnarray}
|a_{j_1j_2} a_{j_2j_3}  \cdots  a_{j_kj_1}|&\leqsign&
 |a_{j_1j_1} a_{j_2j_2} \cdots  a_{j_kj_k}| \nonumber\\\label{mar2}
&=& a_{j_1j_1} a_{j_2j_2} \cdots  a_{j_kj_k} \enspace,\nonumber
\end{eqnarray}
where the last equality is due to the positiveness of diagonal elements of $A$.\\
{\bf Proof of Part 2}: The proof of the Part 2 is obtained similarly by using the definition of $\pd$ matrices instead of $\psd$ matrices and the second part of \Cref{product_order}.
\end{proof}

\begin{lemma}\label{diag_cycle2}
Let $A=(a_{ij}) \in \psd_n(\smax^{\vee})$.
Let $\permutation$ be any permutation of $[n]$.
Then 
\begin{enumerate}
\item $|w(\permutation)| \leqsign \bigtprod_{i\in [n]}a_{ii},$ with equality when
$\permutation$ is the identity permutation.
\item Moreover, if $A\in \pd_n(\smax^{\vee})$ and $\permutation$ is different from
the identity permutation, we have 
$|w(\permutation)| \lsign \bigtprod_{i\in [n]}a_{ii}.$
\end{enumerate}
\end{lemma}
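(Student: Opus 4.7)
The plan is to reduce the statement to \Cref{diag_cycle} by decomposing $\permutation$ into its disjoint cycle structure. Any $\permutation\in\Sp_n$ factors uniquely into disjoint cycles $\cycle_1,\ldots,\cycle_r$ whose supports $[\cycle_1],\ldots,[\cycle_r]$ partition $[n]$, as recalled before \Cref{per}. By the very definitions of the weight of a permutation and of a cycle, this gives $w(\permutation)=\bigtprod_{\ell\in[r]} w(\cycle_\ell)$. Applying the modulus map and using \Cref{prop-modulus} (the modulus is a semiring morphism) then yields $|w(\permutation)|=\bigtprod_{\ell\in[r]} |w(\cycle_\ell)|$.

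I would then bound each factor $|w(\cycle_\ell)|$ separately. If $\cycle_\ell=(j)$ is a fixed point of $\permutation$, then $w(\cycle_\ell)=a_{jj}$, which lies in $\smax^\oplus$ by \Cref{def_psd1}, so $|w(\cycle_\ell)|=a_{jj}=\bigtprod_{i\in[\cycle_\ell]} a_{ii}$. If $\cycle_\ell$ has length $\geq 2$, \Cref{diag_cycle}(i) yields $|w(\cycle_\ell)|\leqsign\bigtprod_{i\in[\cycle_\ell]} a_{ii}$. Multiplying these inequalities across $\ell$ using \Cref{product_order}(i) — which applies because the diagonal entries $a_{ii}$ lie in $\smax^\oplus$ by \Cref{def_psd1} and moduli are automatically in $\smax^\oplus$ — and using that the supports $[\cycle_\ell]$ partition $[n]$, one obtains $|w(\permutation)|\leqsign \bigtprod_{i\in[n]} a_{ii}$. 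The equality when $\permutation$ is the identity is immediate since in that case all cycles reduce to fixed points and the left-hand side equals $\bigtprod_i a_{ii}$.

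For Part 2, suppose $A\in \pd_n(\smax^\vee)$ and $\permutation\neq \mathrm{id}$; then at least one cycle $\cycle_{\ell_0}$ in the decomposition has length $\geq 2$, and \Cref{diag_cycle}(ii) gives the strict inequality $|w(\cycle_{\ell_0})|\lsign \bigtprod_{i\in[\cycle_{\ell_0}]} a_{ii}$. For the remaining factors the nonstrict inequality from Part 1 still holds, and all right-hand sides are strictly positive by \Cref{def_pd1}. Combining the strict inequality for $\ell_0$ with the nonstrict ones via \Cref{product_order}(ii) then produces $|w(\permutation)|\lsign \bigtprod_{i\in[n]} a_{ii}$.

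The only delicate point I anticipate is justifying that one may multiply $\leqsign$-inequalities (and, in Part 2, mix one $\lsign$ with several $\leqsign$) term by term. This requires all factors on both sides to lie in $\smax^\oplus$ and, for the strict case, that the factors multiplying the strict inequality be strictly positive; both conditions follow directly from the $\psd$/$\pd$ characterizations \Cref{def_psd1} and \Cref{def_pd1}, together with the fact that absolute values automatically belong to $\smax^\oplus$.
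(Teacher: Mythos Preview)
Your proposal is correct and follows essentially the same approach as the paper: decompose $\permutation$ into disjoint cycles covering $[n]$, apply \Cref{diag_cycle} to each cycle of length $\geq 2$, handle fixed points trivially, and multiply the resulting inequalities. The paper's own proof is considerably terser and does not spell out the justification for multiplying the $\leqsign$/$\lsign$ inequalities (your ``delicate point''), but the underlying argument is identical.
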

\begin{proof}
Since every permutation of $[n]$ can be decomposed uniquely
into disjoint cycles which cover $[n]$,
Part 1 of \Cref{diag_cycle} is true for any permutation, when replacing
$[\cycle]$ by $[n]$. 
Moreover, if the permutation is different from identity, then applying 
Part 2 of \Cref{diag_cycle} to all the cycles of length $>1$, we get 
Part 2 of \Cref{diag_cycle2}.
\end{proof}

\begin{proof}[Proof of \Cref{trace}]
Let $k \in [n]$ and $A \in \psd_n(\smax^{\vee})$. 
For any subset $K$ of $[n]$ with cardinality $k$,
the submatrix $A[K,K]$ is a positive semi-definite matrix over $\smax^\vee$.
Applying Part 1 of \Cref{diag_cycle2} to this matrix, we obtain
that $|\det(A[K,K])|=\bigtprod_{i\in K}a_{ii}$.
Then, by \Cref{def-trk}, and using that $d_1\geq \cdots\geq d_n$,
we get that $|\tr_kA|= \bigtprod_{i\in [k]}d_i$.
 Since, $\bigtprod_{i\in [k]}d_i$ is one of the summands in the formula of
$\tr_kA$, we have $\tr_k A\succeq \bigtprod_{i\in [k]}d_i$.
Therefore we conclude two possible cases:
$\tr_kA=   \bigtprod_{i\in [k]}d_i \;\text{or} \;\tr_kA  =( \bigtprod_{i\in [k]}d_i)^{\circ}$.
Also, for $A \in \pd_n(\smax^{\vee})$, and any subset $K$ of $[n]$ 
with cardinality $k$, 
the submatrix $A[K,K]$ is a positive definite matrix over $\smax^\vee$.
Therefore, applying Part 2 of \Cref{diag_cycle2} to this matrix,
we obtain that there is no permutation $\permutation$ of $K$ such that $|w(\permutation)|=\bigtprod_{i\in K}a_{ii}$,
other than identity permutation. Hence,  
$\det(A[K,K])=\bigtprod_{i\in K}a_{ii}$.
Since all the terms $\det(A[K,K])$ are in $\smax^\oplus$, we get that
$\tr_kA$ is also in $\smax^\oplus$, and so  $\tr_kA=   \bigtprod_{i\in [k]}d_i$.
\end{proof}

\begin{corollary}\label{char_pd}
For $A=(a_{ij}) \in \pd_n(\smax^{\vee})$ with the diagonal elements $d_n \leqsign \cdots \leqsign d_1$ we have
\[ P_A = \bigtsum_{k=0}^{n} \bigg((\ominus \unit)^{n-k}  (\bigtprod_{i\in [n]-k}d_i)\bigg)\X^{k}\enspace .\]
\end{corollary}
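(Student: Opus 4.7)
The plan is to deduce this directly from the two results that were established just before the statement: the general expansion of the $\smax$-characteristic polynomial in terms of the traces of compound matrices (\Cref{comp_charpoly}), and the explicit computation of these traces for $\pd$ matrices (\Cref{trace}).

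First, I would invoke \Cref{comp_charpoly} to write
\[
P_A \;=\; \bigtsum_{k=0}^{n} \Bigl((\ominus \unit)^{n-k}\, \tr_{n-k}(A)\Bigr)\, \X^{k}.
\]
Second, for each index $k\in\{0,\dots,n\}$, I would apply \Cref{trace} to the positive definite matrix $A$: the theorem asserts that $\tr_{j}(A) = \bigtprod_{i\in [j]} d_i$ for all $j\in [n]$ (and trivially $\tr_0(A)=\unit$ by the convention $\ext^0 A = I$). Taking $j=n-k$ gives $\tr_{n-k}(A) = \bigtprod_{i\in [n-k]} d_i$, which is an element of $\smax^{\oplus}$ and in particular invertible (never a balance element), because $A$ is $\pd$ and hence its diagonal entries are strictly positive.

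Substituting this value into the expansion above immediately yields the claimed identity
\[
P_A \;=\; \bigtsum_{k=0}^{n} \Bigl((\ominus \unit)^{n-k}\, \bigtprod_{i\in [n-k]} d_i\Bigr)\, \X^{k}.
\]
All the substantive work has already been done in establishing \Cref{trace}; the role of the $\pd$ (as opposed to $\psd$) hypothesis here is only to rule out the balance alternative for $\tr_{n-k}A$, so the coefficients are genuinely signed monomials and the formula is unambiguous. There is no real obstacle: the proof is a two-line combination of the quoted results.
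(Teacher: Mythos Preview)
Your proposal is correct and matches the paper's approach exactly: the corollary is stated immediately after \Cref{trace} with no separate proof, since it is the direct combination of \Cref{comp_charpoly} with the $\pd$ case of \Cref{trace}, just as you describe.
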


\begin{example}\label{balanc_char}
Let $A= \begin{pmatrix}
\unit&\unit\\
\unit&\unit
\end{pmatrix} \in \psd_2(\mathbb{S_{\max}^{\oplus}})$. By \Cref{comp_charpoly}, the formal characteristic polynomial of $A$ is 
$P_A = \X^2 \ominus \X \oplus \unit^{\circ}$,\;
which shows that the formal characteristic polynomial associated to $\psd$ matrices may have balance elements. In \Cref{tpsd_eig} we considered the $\smax$-roots and $\smax^{\vee}$-roots of $P_A$ which are the same as $\smax$-eigenvalues and $\smax^{\vee}$-eigenvalues of $A$. 
\end{example}

\begin{remark}
In usual algebra, semi-definite matrices which are not definite have the eigenvalue 0, here this is replaced by the fact that the characteristic polynomial
has a balanced constant coefficient and that there is an infinite number of $\smax$-eigenvalues.
\end{remark}


 \begin{example}\label{charpoly}
 Let 
$A = \begin{pmatrix}
3 &2& 1\\
2&2&1\\
1&1&1
\end{pmatrix}$. We have $A \in \pd_{3}(\smax^{\vee})$ and
$\ext^1 A =\begin{pmatrix}
3 &2& 1\\
2&2&1\\
1&1&1
\end{pmatrix}
$,
\[\begin{array}{ccc}
\ext^2 A& =&\begin{pmatrix}
\det\begin{pmatrix}
3&2\\2&2
\end{pmatrix}
 &\det\begin{pmatrix}
3&1\\2&1
\end{pmatrix}
& 
\det\begin{pmatrix}
2&1\\2&1
\end{pmatrix}
\\[1em]
\det\begin{pmatrix}
3&2\\1&1
\end{pmatrix}
&
\det\begin{pmatrix}
3&1\\1&1
\end{pmatrix}
&
\det\begin{pmatrix}
2&1\\1&1
\end{pmatrix}
\\[1em]
\det\begin{pmatrix}
2&2\\1&1
\end{pmatrix}
&\det\begin{pmatrix}
3&1\\1&1
\end{pmatrix}
&\det\begin{pmatrix}
2&1\\1&1
\end{pmatrix}
\end{pmatrix}
=\begin{pmatrix}
5 &4& 3^{\circ}\\
4&4&3\\
3^\circ&4&3
\end{pmatrix}, 
\end{array}\]
and 
$\ext^3 A =\det\begin{pmatrix}
3 &2& 1\\
2&2&1\\
1&1&1
\end{pmatrix}=6$.
Therefore
$\tr_{0} A=\unit, \; \tr_{1} A= 3, \; \tr_{2} A= 5$ and $\tr_{3} A=6.$
So, we have
$P_A = \X^3 \ominus 3 \X^2 \oplus 5\X \ominus 6\enspace$\enspace.
\Cref{Fig:plot_poly}
illustrates the plot of $P_A$.
\begin{figure}[!h]
\small
 \centering
\begin{tikzpicture}[scale=0.7]
\draw[->] (-3.5,0) -- (3.5,0);
\draw[->] (0,-6.5) -- (0,6.5);
\draw[dotted](1,-1) -- (1,1);
\draw[dotted] (2,-2) -- (2,2);
\draw[dotted] (3,4) -- (3,-4);
\draw[thick] (1,-1) -- (-1,-1);
\draw[thick] (-1,-1) -- (-2,-2);
\draw[thick] (-2,-2) -- (-3,-4);
\draw[thick] (1,1) -- (2,2);
\draw[thick] (2,-2) -- (3,-4);
\draw[thick] (3,4) -- (3.5,6.5);
\draw[thick] (-3,-4) -- (-3.5,-6.5);

\fill (1,1) circle (3pt);
\fill (1,-1) circle (3pt);
\fill (3,4) circle (3pt);
\fill (3,-4) circle (3pt);
\fill (2,2) circle (3pt);
\fill (2,-2) circle (3pt);
\fill (-1,-1) circle (3pt);
\fill (-2,-2) circle (3pt);
\fill (-3,-4) circle (3pt);
\fill (0.25,-0.25) node {\tiny$\zero$};
\fill (-4,-0.4) node {\tiny$\smax^{\ominus}$};
\fill (4,-0.4) node {\tiny$\smax^{\oplus}$};
\fill (0.5,6) node {\tiny$\smax^{\oplus}$};
\fill (0.5,-6) node {\tiny$\smax^{\ominus}$};
\fill (-1,-0.4) node {\tiny$\ominus 1$};
\fill (-2,-0.4) node {\tiny$\ominus 2$};
\fill (-3,-0.4) node {\tiny$\ominus 3$};
\fill (1.1,-0.4) node {\tiny$1$};
\fill (2.1,-0.4) node {\tiny$2$};
\fill (3.1,-0.4) node {\tiny $3$};
\fill (0.25,-1) node {\tiny$\ominus 6$};
\fill (0.25,-2) node {\tiny$\ominus 7$};
\fill (0.25,-4) node {\tiny$\ominus 9$};
\fill (0.25,1) node {\tiny$6$};
\fill (0.25,2) node {\tiny$7$};
\fill (0.25,4) node {\tiny$9$};
\end{tikzpicture}\caption{ Plot of $P_A=\X^3 \ominus 3 \X^2 \oplus 5\X \ominus 6$ in \Cref{charpoly}. The solid black line illustrates $\widehat{P_A}$. The points of
discontinuity of $\widehat{P_A}$ are $1, 2$ and $3$ which are the roots of $P_A$\enspace.  }\label{Fig:plot_poly}
 \end{figure}
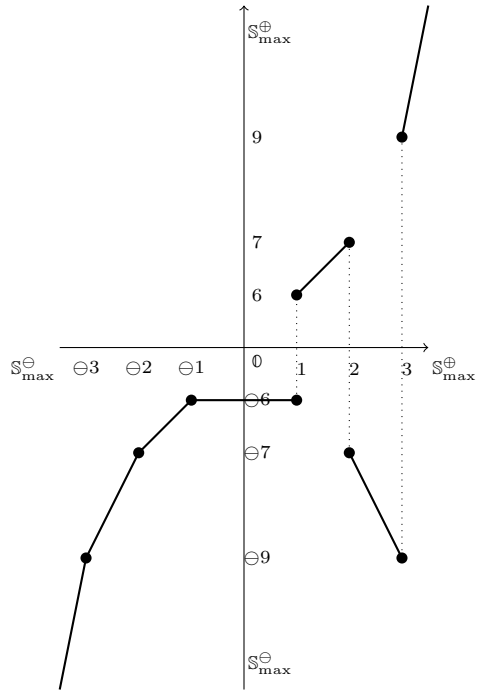

\end{example}




\subsection{$\tmax$-Eigenvalues and $\smax$-Eigenvalues of  $\psd$ and $\pd$ matrices}\label{sec:eig}
Let $A$ be a  $\psd$ matrix. In the following theorem, we compute the $\tmax$-eigenvalues of $|A|$.
\begin{theorem}\label{tropical_eigs}
Let $A=(a_{ij}) \in \psd_n(\smax^{\vee})$. Then the $\tmax$-eigenvalues of $|A|=(|a_{ij}|)\in (\tmax)^{n \times n}$ are the diagonal elements of $|A|$ counted with multiplicities.
\end{theorem}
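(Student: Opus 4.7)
Here is my plan for proving Theorem~\ref{tropical_eigs}.

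The key idea is to compute the coefficients of the $\tmax$-characteristic polynomial $P_{|A|} = \per(\X I \oplus |A|)$ explicitly, recognize this polynomial as $\bigtprod_{i\in [n]}(\X \oplus d_i)$, where $d_1 \geqsign \cdots \geqsign d_n$ are the diagonal entries of $|A|$, and then apply the fundamental theorem of tropical algebra stated in \Cref{roots_poly}.

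First I would unfold the definition of the $\tmax$-formal characteristic polynomial: its coefficient of $\X^k$ is
\[ (P_{|A|})_k \;=\; \bigtsum_{\substack{K\subset [n]\\ \card(K)=n-k}} \per(|A|[K,K]) \enspace. \]
By \Cref{perdet}, $\per(|A|[K,K]) = |\det(A[K,K])|$. Now $A[K,K]$ is a principal submatrix of a $\psd$ matrix, hence itself lies in $\psd_{|K|}(\smax^\vee)$ by the characterization in \Cref{def_psd1}. Applying Part~1 of \Cref{diag_cycle2} to $A[K,K]$, the modulus of every permutation weight is bounded above by the diagonal product, with equality attained by the identity permutation. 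Since the diagonal entries of $A$ lie in $\smax^\oplus$ (so $a_{ii}=|a_{ii}|$), this gives $|\det(A[K,K])| = \bigtprod_{i\in K} a_{ii}$.

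Substituting back, $(P_{|A|})_k = \bigtsum_{|K|=n-k}\bigtprod_{i\in K} a_{ii}$, the $(n{-}k)$-th tropical elementary symmetric function in the $a_{ii}$'s. Because addition in $\tmax$ is maximum and the $d_i$'s are ordered, this supremum is attained by the $n-k$ largest diagonal entries:
\[ (P_{|A|})_k \;=\; \bigtprod_{i=1}^{n-k} d_i\enspace. \]
On the other hand, the same computation shows that the formal polynomial $\bigtprod_{i\in [n]}(\X \oplus d_i)$ has exactly these coefficients. By equality of coefficients, the two formal polynomials coincide, and thus so do the associated polynomial functions:
\[ \widehat{P_{|A|}}(x) \;=\; \bigtprod_{i\in [n]} (x \oplus d_i)\enspace. \]

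Finally, \Cref{roots_poly} applied to this factorization tells us that the $\tmax$-roots of $P_{|A|}$, counted with multiplicities, are precisely $d_1 \geqsign d_2 \geqsign \cdots \geqsign d_n$, which by \Cref{algebraic} are exactly the $\tmax$-algebraic eigenvalues of $|A|$. This completes the argument. The only mildly delicate point is the identification of the coefficients through \Cref{diag_cycle2}, which requires checking that principal submatrices inherit the $\psd$ property; this is immediate from the entrywise characterization in \Cref{def_psd1}. (Alternatively, one can shortcut the computation by applying \Cref{trace} and \Cref{comp_charpoly} to obtain $|P_A| = P_{|A|}$ directly.)
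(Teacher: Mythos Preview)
Your proposal is correct and follows essentially the same route as the paper. The paper computes the coefficients of $P_{|A|}$ via \Cref{trace} (which itself rests on \Cref{diag_cycle2}), then verifies the concavity condition \eqref{concavepoly} and invokes \Cref{roots_poly}; you unpack \Cref{trace} by applying \Cref{diag_cycle2} to each principal submatrix directly and then recognize the resulting coefficients as those of the formal product $\bigtprod_i(\X\oplus d_i)$, which is the same argument with a slightly different final packaging.
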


\begin{proof}
Let $d_1\geq d_2\geq \cdots \geq d_n$ be the diagonal elements of $|A|$. W.l.o.g let $d_1 \neq \zero$. Therefore, for $i\in [n]$ we get that $\tr_iA\neq \zero$. Otherwise $d_1= \cdots=d_n=\zero$ and since $A \in \psd_n(\smax^{\vee})$ we have $A=\zero$ and the proof is straightforward.  Using 
\Cref{perdet} the characteristic polynomial of $|A|$ over $\tmax$ is
$P_{|A|} = \tsum_{k=0,\ldots,n}  (\tr_{n-k}A)\X^{k}$.
 By \Cref{trace} for $k=2, \ldots, n$ 
\[d_{k-1}= \tr_{k-1}A-\tr_{k-2}A \geq  \tr_{k}A - \tr_{k-1}A=d_k.\]
Finally, using \Cref{concavepoly} together with \Cref{roots_poly} we deduce the result.
\end{proof}


Let us consider \Cref{balanc_char} again. The $\smax$-charactestic polynomial of $A$ has the polynomial function $\widehat{P}_A(x) = x^2 \oplus \unit^{\circ} $ which is not a polynomial function in $\PF (\smax^{\vee})$.
So we are not interested in considering the $\smax$-eigenvalues of $\psd$ matrices. From here on we prove our results only for the case of $\pd$ matrices.

\begin{theorem}\label{sym_eigs}
  Let $A \in \pd_n(\smax^{\vee})$. The diagonal elements of $A$ are precisely
  the $\smax$-eigenvalues of $A$, counted with multiplicities.
\end{theorem}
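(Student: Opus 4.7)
The plan is to read off the $\smax$-roots of the characteristic polynomial $P_A$ directly from its explicit form in Corollary~\ref{char_pd}, and then to apply the unique factorization and multiplicity results of Corollary~\ref{coro-uniquefact} and Theorem~\ref{coro2-uniquefact}.

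First, order the diagonal of $A$ as $d_1 \geqsign \cdots \geqsign d_n$; by Theorem~\ref{def_pd1} each $d_i$ lies in $\smax^{\oplus}\setminus\{\zero\}$. By Corollary~\ref{char_pd} one has $(P_A)_k = (\ominus\unit)^{n-k} d_1 d_2 \cdots d_{n-k}$, so $P_A$ lives in $\smax^{\vee}[\X]$, has full support, and its moduli satisfy $|P_A|_k = d_1 d_2 \cdots d_{n-k}$. Using that the modulus map is a semiring morphism (Property~\ref{prop-modulus}) and Remark~\ref{perdet}, one checks that these are exactly the coefficients of the $\tmax$-characteristic polynomial $P_{|A|}$, so $|P_A| = P_{|A|}$.

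Next, Theorem~\ref{tropical_eigs} identifies the $\tmax$-roots of $P_{|A|}$ as $d_1,\ldots,d_n$ counted with multiplicity, so by Lemma~\ref{roots_poly} the polynomial $|P_A|$ is factored. Consequently Theorem~\ref{suf_cond} applies to $P_A$: it produces signed roots $r_i$ determined by $r_i (P_A)_{n-i+1} = \ominus (P_A)_{n-i}$, an equation that is solvable since each $(P_A)_{n-i+1}$ is invertible in $\smax^{\vee}$. Substituting the explicit coefficients yields $r_i = d_i$ for every $i\in[n]$, hence the factorization
\[ \widehat{P_A}(x) = (x \ominus d_1)(x \ominus d_2) \cdots (x \ominus d_n). \]

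Finally, since every $d_i$ lies in $\smax^{\oplus}$, the exceptional situation of Corollary~\ref{coro-uniquefact} (having both $c$ and $\ominus c$ as roots for the same modulus) cannot occur, so the factorization above is unique. Theorem~\ref{coro2-uniquefact} then equates the multiplicity of each $\smax^{\vee}$-root of $P_A$ with the multiplicity of its modulus as a $\tmax$-root of $|P_A| = P_{|A|}$, which by Theorem~\ref{tropical_eigs} is exactly its number of appearances among the diagonal entries of $A$; and conversely any $r\in\smax^{\vee}$ not among the $d_i$ evaluates $\widehat{P_A}$ at a product of non-zero signed factors, hence gives a non-balanced value and is not a root. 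The main task is just the index bookkeeping in computing the $r_i$; the preceding structural apparatus carries all the real weight, so no substantive obstacle is expected.
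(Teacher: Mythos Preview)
Your proof is correct and follows essentially the same route as the paper: both extract the explicit coefficients of $P_A$ from Corollary~\ref{char_pd}, verify that $|P_A|$ is factored, apply Theorem~\ref{suf_cond} to read off $r_i=d_i$, and then invoke Corollary~\ref{coro-uniquefact} and Theorem~\ref{coro2-uniquefact} for uniqueness and multiplicities. The only cosmetic difference is that you route the verification that $|P_A|$ is factored through the identification $|P_A|=P_{|A|}$ and Theorem~\ref{tropical_eigs}, whereas the paper checks the concavity condition~\eqref{concavepoly} directly from the explicit moduli $d_1\cdots d_{n-k}$; your final ``converse'' paragraph is also a harmless addition, since the exclusion of other roots is already contained in the unique factorization statement.
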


\begin{proof}
Let $d_1\geq d_2\geq \cdots \geq d_n$ be the diagonal elements of $A$. Using \Cref{char_pd} we have 
\begin{equation}\label{factor_poly}P_A(\X)= \bigtsum_k ((\ominus \unit )^{ k}  d_1  \cdots  d_k) \X^{n-k}\end{equation}
and therefore by \Cref{concavepoly} and \Cref{roots_poly} we have
\[|P_A|(\X)= \bigtsum_k (d_1  \ldots  d_k )\X^{n-k}= (\X \oplus d_1)  \cdots (\X \oplus d_n). \]
Moreover, using \Cref{factor_poly} we have $P_{n-i+1}= (\ominus \unit)^{i-1}\tr_{i-1}A$ and 
$\ominus P_{n-i} = (\ominus \unit)^{i+1} \tr_iA$. Therefore $d_i  P_{n-i+1}= \ominus P_{n-i}$ and by \Cref{suf_cond}, $d_i,\; i\in [n]$ are the $\smax$-roots of $P_A$. Also since all the diagonal elements of $A$ ($d_i,\; i\in [n]$) are positive,  \Cref{coro-uniquefact} and \Cref{coro2-uniquefact}
give us that $P_A$ has a unique factorization and that the multiplicity
of a diagonal element as a $\smax$-eigenvalue of $A$ coincides
with the number of its occurences as a diagonal element.
\end{proof}

\section{Eigenvectors of tropical positive (semi-)definite matrices}\label{sec:3p}
\subsection{$\smax$-Eigenvectors of $\pd$ matrices using the adjoint matrix}

We now specialize some of the properties proved in \Cref{spec-eig-vector}

\begin{proposition}\label{balance-adj}
Let $A\in \pd_n(\smax^\vee)$, and set $\gamma_{i}=a_{ii}$ for $i\in [n]$.
Assume that $\gamma_{1}\succeq \gamma_{2} \succeq \cdots \succeq \gamma_{n}$,
and define $B_k=\gamma_k I\ominus A$ for some $k \in [n]$.
Then, all the diagonal entries of $(B_k)^{\mathrm{adj}}$ are non-zero and
they are all in $\smax^\circ$
except possibly the $k$-th diagonal entry,
which is also in $\smax^\circ$ if and only if $\gamma_k$ is not a simple $\smax$-eigenvalue.

\end{proposition}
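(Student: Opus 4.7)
The plan is to express the diagonal entries of $(B_k)^{\adj}$ as evaluations of the characteristic polynomial of a principal submatrix, and then to use the unique factorization of the characteristic polynomial of a $\pd$ matrix established in the proof of \Cref{sym_eigs}.

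First I would observe, using \Cref{def-adjugate} and \Cref{charpoly_s}, that
$$(B_k)^{\adj}_{i,i} = \det(B_k[\hat i,\hat i]) = \det(\gamma_k I_{n-1} \ominus A[\hat i,\hat i]) = \widehat{P_{A[\hat i,\hat i]}}(\gamma_k).$$
The principal submatrix $A[\hat i,\hat i]$ is itself positive definite (immediate from \Cref{def_pd1}), and its diagonal entries are the $(\gamma_j)_{j\ne i}$, all in $\smax^\oplus\setminus\{\zero\}$. Applying \Cref{char_pd} together with \Cref{coro-uniquefact} (exactly as in the proof of \Cref{sym_eigs}) to this submatrix yields the unique factorization
$$\widehat{P_{A[\hat i,\hat i]}}(x) = \bigtprod_{j\ne i} (x \ominus \gamma_j),$$
and specializing at $x=\gamma_k$ gives $(B_k)^{\adj}_{i,i} = \bigtprod_{j\ne i}(\gamma_k\ominus\gamma_j).$

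Next I would analyze each factor. Since $\gamma_k,\gamma_j\in\smax^\oplus\setminus\{\zero\}$, the factor $\gamma_k\ominus\gamma_j$ equals the non-zero balance element $\gamma_k^\circ$ when $\gamma_j=\gamma_k$, and otherwise lies in $\smax^\vee\setminus\{\zero\}$ (being either $\gamma_k$ or $\ominus\gamma_j$, whichever is larger). In every case the factor is non-zero, so the modulus of the product equals $\bigtprod_{j\ne i}(\gamma_k\oplus\gamma_j)\ne\zero$, which proves that every diagonal entry of $(B_k)^{\adj}$ is non-zero. For the position in $\smax^\circ$ versus $\smax^\vee$ I would use the basic sign arithmetic in $\smax$: a product of elements of $\smax^\vee$ stays in $\smax^\vee$, while the product of a non-zero balance element with any non-zero element again belongs to $\smax^\circ$. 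Hence $(B_k)^{\adj}_{i,i}\in\smax^\circ$ if and only if some $j\ne i$ satisfies $\gamma_j=\gamma_k$. When $i\ne k$, the index $j=k$ itself yields such a collision, so $(B_k)^{\adj}_{i,i}\in\smax^\circ$ automatically. When $i=k$, the condition becomes: there exists $j\ne k$ with $\gamma_j=\gamma_k$, i.e., $\gamma_k$ appears at least twice on the diagonal of $A$; by \Cref{sym_eigs}, this is exactly the statement that $\gamma_k$ is not a simple $\smax$-eigenvalue.

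There is no genuinely hard step: once the unique factorization of $\widehat{P_{A[\hat i,\hat i]}}$ is in hand, the rest is sign/balance bookkeeping in $\smax$. The only point that deserves a short verification is that $a^\circ\odot b\in\smax^\circ\setminus\{\zero\}$ whenever $a,b\ne\zero$, so that a single balance factor propagates unambiguously through the whole product.
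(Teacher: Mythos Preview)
Your argument is correct, and it takes a genuinely different route from the paper's. The paper does not invoke the factorization of the characteristic polynomial at all: it argues that $|B_k|$ is itself a $\psd$ matrix, so each principal $(n-1)\times(n-1)$ minor of $B_k$ has modulus equal to the product of the corresponding diagonal entries of $B_k$; combining this with $\det(B_k[\hat i,\hat i])\succeq \bigtprod_{j\neq i}(\gamma_k\ominus\gamma_j)$ (the identity permutation being one summand) forces the minor into $\smax^\circ$ whenever that diagonal product is balanced. The simple case $i=k$ is handled separately, by checking directly that every non-identity permutation contributes a weight of strictly smaller modulus, so the determinant equals $(\ominus\unit)^{k-1}\gamma_1\cdots\gamma_{k-1}\gamma_k^{\,n-k}\in\smax^\vee\setminus\{\zero\}$.

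Your approach is cleaner: by recognising $(B_k)^{\adj}_{i,i}=\widehat{P_{A[\hat i,\hat i]}}(\gamma_k)$ and reusing the factorization $\widehat{P_{A'}}(x)=\bigtprod_{j\neq i}(x\ominus\gamma_j)$ already established for $\pd$ matrices, you obtain the exact closed form $\bigtprod_{j\neq i}(\gamma_k\ominus\gamma_j)$ in one stroke, and the two cases ($i\neq k$ versus $i=k$) fall out uniformly from the same product. The paper's argument is a bit more self-contained (it only needs \Cref{diag_cycle2} on moduli, not \Cref{suf_cond} or \Cref{coro-uniquefact}), but yours makes better structural use of the characteristic-polynomial machinery developed earlier and in fact yields a slightly stronger conclusion, namely the explicit value of every diagonal adjugate entry rather than just its balance status.
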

\begin{proof}
Note that all the $\gamma_k$ are different from $\zero$.
Indeed, the modulus of $B_k$ is a positive semi-definite matrix 
with diagonal entries equal to $\gamma_{1},\ldots, \gamma_{k-1}, \gamma_k,\ldots,
\gamma_{k}$. So all $(n-1)\times (n-1)$ principal submatrices are also 
of same type and so have a determinant modulus equal to
the product of its diagonal entries moduli.
Since the determinant is also $\succeq$ to this product,
it is non-zero and 
we get that it is in $\smax^\circ$, if the product is in $\smax^\circ$.
This is the case for all the principal submatrices which contain the $k$-th 
diagonal element of $B_k$. This is also the case, when $\gamma_k$ is not 
a simple $\smax$-eigenvalue.
If 
$\gamma_k$ is a simple $\smax$-eigenvalue,
then one can show that the $k$th diagonal entry of
$(B_k)^{\mathrm{adj}}$ is 
equal to  $(\ominus \unit)^{k-1} \gamma_{1}\cdots \gamma_{k-1} \gamma_k^{n-k}$,
so is not in $\smax^\circ$.
\end{proof}
Note that  $\gamma_k$ is a simple $\smax$-eigenvalue if and only if $\gamma_{k-1}\succ \gamma_k \succ \gamma_{k+1}$, with the convention $\gamma_{n+1}=\zero$.
By \Cref{lem-Bk}, special weak $\smax$-eigenvectors associated to
the eigenvalue $\gamma_k$ are the columns of $(B_k)^{\mathrm{adj}} $
which are not in $(\smax^\circ)^n$.
When $\gamma_k$ is simple, the above result shows that among the columns of $(B_k)^{\mathrm{adj}} $, the $k$-th column is necessarily a weak $\smax$-eigenvector
associated to $\gamma_k$, and that the other columns 
cannot be $\smax$-eigenvectors.
Hence,  the $k$-th column is the only candidate to be a  $\smax$-eigenvector,
we shall denote it by $v^{(k)}$.

\begin{corollary}\label{coro-simple-eigen}
Let $A\in \pd_n(\smax^\vee)$, and $\gamma=\gamma_k$ and $B=B_k$ be as in \Cref{balance-adj}. Assume that $\gamma$ is a simple $\smax$-eigenvalue. Let 
\begin{equation}\label{vk}
v^{(k)}:= (B_k)_{:,k}^{\mathrm{adj}}.
\end{equation}
Then we have the following properties:
\begin{enumerate}
\item   $v^{(k)}$ is a  weak $\smax$-eigenvector
associated to $\gamma$, such that  $v^{(k)}_k\in\smax^\vee\setminus\{\zero\}$.
\item There exists a $\smax$-eigenvector $v$ associated to $\gamma$ such that
$|v|=|v^{(k)}|$ and $v_i=v^{(k)}_i$ for all $i\in [n]$
satisfying $v^{(k)}_i\in\smax^\vee$, in particular for $i=k$.
\item Any $\smax$-eigenvector $v$ associated to $\gamma$ satisfies $v\balance \lambda v^{(k)}$ for some $\lambda\in \smax^{\vee}\setminus\{\zero\}$.
\end{enumerate}
\end{corollary}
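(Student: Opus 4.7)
The plan is to read off all three conclusions from \Cref{lem-Bk}, \Cref{cond_unique}, \Cref{cond_existence} and \Cref{balance-adj}, after establishing the key fact that the diagonal entry $(B_k)^{\mathrm{adj}}_{k,k}$ is signed and invertible. Since $\gamma=\gamma_k$ is assumed to be a simple $\smax$-eigenvalue, \Cref{balance-adj} yields $(B_k)^{\mathrm{adj}}_{k,k}\in\smax^{\vee}\setminus\{\zero\}$; more precisely, the proof of \Cref{balance-adj} actually identifies this entry as $(\ominus\unit)^{k-1}\gamma_1\cdots\gamma_{k-1}\gamma_k^{n-k}$, which is manifestly signed and non-zero. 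This is the only non-routine input; everything else is a direct translation.

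For (i), apply \Cref{lem-Bk} to $B=B_k$: it gives $A\,B^{\mathrm{adj}}\balance\gamma B^{\mathrm{adj}}$, hence column by column $A\,v^{(k)}\balance\gamma v^{(k)}$. Combined with the fact just recalled that $v^{(k)}_k=(B_k)^{\mathrm{adj}}_{k,k}\in\smax^{\vee}\setminus\{\zero\}$, the vector $v^{(k)}$ has at least one coordinate in $\smax^{\vee}\setminus\{\zero\}$ and satisfies the balance equation \eqref{smaxeigenvector}, so it is a weak $\smax$-eigenvector in the sense of \Cref{smaxeigenvector-ws}.

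For (ii), apply \Cref{cond_existence} with the choice $i=j=k$, which is legitimate since $(B_k)^{\mathrm{adj}}_{k,k}\neq\zero$. The theorem directly produces a $\smax$-eigenvector $v$ associated to $\gamma$ satisfying $|v|=|B^{\mathrm{adj}}_{:,k}|=|v^{(k)}|$ and $v_i=(B_k)^{\mathrm{adj}}_{i,k}=v^{(k)}_i$ for every $i$ with $v^{(k)}_i\in\smax^{\vee}$; in particular this holds for $i=k$.

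For (iii), apply the first part of \Cref{cond_unique} with the same indices $i=j=k$, using again $(B_k)^{\mathrm{adj}}_{k,k}\in\smax^{\vee}\setminus\{\zero\}$. It delivers, for any $\smax$-eigenvector $v$ associated to $\gamma$, a scalar $\lambda\in\smax^{\vee}\setminus\{\zero\}$ with $v\balance\lambda B^{\mathrm{adj}}_{:,k}=\lambda v^{(k)}$, which is precisely the required conclusion.

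The only step that is not a pure citation is the verification in \Cref{balance-adj} that $(B_k)^{\mathrm{adj}}_{k,k}$ is signed under the simplicity hypothesis, but since that computation has already been carried out in the proof of \Cref{balance-adj}, nothing further is needed here; the three conclusions of the corollary follow by successive invocations of the cited results.
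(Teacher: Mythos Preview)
Your proof is correct and follows essentially the same route as the paper: both first invoke \Cref{balance-adj} under the simplicity hypothesis to get $(B_k)^{\mathrm{adj}}_{k,k}\in\smax^{\vee}\setminus\{\zero\}$, and then derive (i), (ii), (iii) by direct citation of \Cref{lem-Bk}, \Cref{cond_existence}, and the first part of \Cref{cond_unique} respectively, with the choice $j=k$.
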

\begin{proof}
Since $\gamma$ is simple,  \Cref{balance-adj} shows that $(B_k)_{k,k}^{\mathrm{adj}}\in \smax^\vee\setminus\{\zero\}$.
Then, Point i) follows from \Cref{lem-Bk}.

Point ii)  follows from \Cref{cond_existence}, using that $(B_k)_{k,k}^{\mathrm{adj}}\neq \zero$, and the fact that $i=k$ is possible follows from $(B_k)_{k,k}^{\mathrm{adj}}\in \smax^\vee\setminus\{\zero\}$.

Point iii) follows from the first part of~\Cref{cond_unique} using that $(B_k)_{k,k}^{\mathrm{adj}}\in \smax^\vee\setminus\{\zero\}$.
\end{proof}
\begin{remark}
If $\gamma$ is not necessarily simple, then Point ii) in \Cref{coro-simple-eigen} still holds, except that $i=k$ may not satisfy the property.
Indeed, this follows from \Cref{cond_existence}, using that $(B_k)_{k,k}^{\mathrm{adj}}\neq \zero$, and the later is always true for a positive definite matrix $A$.
Moreover, the same holds by replacing $v^{(k)}$ by any column of 
$(B_k)^{\mathrm{adj}}$, since all diagonal entries of $(B_k)^{\mathrm{adj}}$ are non-zero, by \Cref{balance-adj}.
\end{remark}

\begin{corollary}\label{coro-unique-eigen}
Let $A\in \pd_n(\smax^\vee)$, and $\gamma=\gamma_k$ and $B=B_k$ be as in \Cref{balance-adj}. Assume there exists a column $j$ of $B^\adj$ which is in $(\smax^\vee)^n\setminus \{\zero\}$ (as in \Cref{cond_unique}).
Then, $j=k$, and any $\smax$-eigenvector is
a multiple of $B^\adj_{:,j}$ and $\gamma$ is a simple
(algebraic) $\smax$-eigenvalue of $A$.
\end{corollary}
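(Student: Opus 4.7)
The plan is to combine the dichotomy from Proposition~\ref{balance-adj} (which pins down exactly when diagonal entries of $B^{\mathrm{adj}}$ lie in $\smax^\vee$ versus $\smax^\circ$) with Theorem~\ref{cond_unique}~(ii) (which handles the uniqueness of eigenvectors once a fully signed column is found). Concretely, the assumption $B^{\mathrm{adj}}_{:,j}\in(\smax^\vee)^n\setminus\{\zero\}$ is essentially a statement about a single diagonal entry, namely $(B^{\mathrm{adj}})_{j,j}$, and Proposition~\ref{balance-adj} forces that entry to be of a very restricted form.

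First I would extract the information carried by the hypothesis on column $j$. Since $B^{\mathrm{adj}}_{:,j}\in(\smax^\vee)^n$, in particular $(B^{\mathrm{adj}})_{j,j}\in\smax^\vee$. On the other hand, Proposition~\ref{balance-adj} guarantees that every diagonal entry of $B^{\mathrm{adj}}$ is nonzero, so $(B^{\mathrm{adj}})_{j,j}\in\smax^\vee\setminus\{\zero\}$, which means it is not in $\smax^\circ$ (the sets $\smax^\vee$ and $\smax^\circ$ meet only at $\zero$). Now the same proposition asserts that each diagonal entry of $B^{\mathrm{adj}}$ lies in $\smax^\circ$ except possibly the $k$-th one. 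Consequently the only way $(B^{\mathrm{adj}})_{j,j}$ can avoid $\smax^\circ$ is to have $j=k$; and the further clause of the proposition, that $(B^{\mathrm{adj}})_{k,k}\in\smax^\circ$ if and only if $\gamma_k$ is not simple, forces $\gamma=\gamma_k$ to be simple.

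Once $j=k$ is established, the uniqueness assertion is immediate from Theorem~\ref{cond_unique}~(ii): any $\smax$-eigenvector $v$ associated to $\gamma$ satisfies $v=\lambda B^{\mathrm{adj}}_{:,j}$ for some $\lambda\in\smax^\vee\setminus\{\zero\}$, which is exactly the claim that every eigenvector is a multiple of $B^{\mathrm{adj}}_{:,j}$. Finally, for the word \emph{simple} in the last clause to be interpreted as ``algebraic simplicity,'' I would appeal to Theorem~\ref{sym_eigs}, which identifies the algebraic multiplicity of $\gamma_k$ with the number of diagonal entries of $A$ equal to $\gamma_k$; combined with the remark following Proposition~\ref{balance-adj} (that simple $\smax$-eigenvalue means $\gamma_{k-1}\succ\gamma_k\succ\gamma_{k+1}$), this guarantees algebraic multiplicity one.

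There is essentially no obstacle: all three conclusions reduce to bookkeeping with the dichotomy in Proposition~\ref{balance-adj} and a direct invocation of Theorem~\ref{cond_unique}. The only subtle point is to notice that ``column $j$ in $(\smax^\vee)^n$ and nonzero'' immediately implies ``$(B^{\mathrm{adj}})_{j,j}\in\smax^\vee\setminus\{\zero\}$,'' which is the unique piece of information needed to trigger the dichotomy.
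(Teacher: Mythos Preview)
Your proof is correct and follows essentially the same approach as the paper: both use Proposition~\ref{balance-adj} to deduce from the signed diagonal entry $(B^{\mathrm{adj}})_{j,j}$ that $j=k$ and that $\gamma$ is simple, and then invoke Theorem~\ref{cond_unique}~(ii) for the uniqueness of the eigenvector. Your additional remark tying simplicity to algebraic multiplicity via Theorem~\ref{sym_eigs} is a welcome clarification but not strictly needed, since the paper treats ``simple'' in Proposition~\ref{balance-adj} as already meaning algebraic simplicity.
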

\begin{proof}
Assume there exists a column $j$ of $B^\adj$ which is in $(\smax^\vee)^n\setminus \{\zero\}$.
\Cref{balance-adj} shows that any
column of $B^\adj$ different from the  $k$-th column has a non-zero
balanced coefficient, and so $j=k$. 
Also, if $\gamma$ is not simple, the same holds for the $j$-th column.
This shows that $\gamma$ is  a simple
(algebraic) $\smax$-eigenvalue of $A$.
Finally,  by the second part of~\Cref{cond_unique}, any $\smax$-eigenvector associated to the eigenvalue $\gamma$ is a multiple of $B^\adj_{:,k}$.
\end{proof}

In \Cref{ex_eig} and \Cref{ex_eig2}, we shall see that even though the entries of $A$ are in $\smax^{\vee}$, and that $A$ has $n$ distinct eigenvalues, there may exist eigenvalues $\gamma_k$ such $v^{(k)}$ (and thus any column of $(B_k)^{\mathrm{adj}}$) 
is not a $\smax$-eigenvector, and that this may hold for the maximal eigenvalue,
see  \Cref{ex_eig2}.


 \begin{example}\label{ex_eig1}Let 
$A = \begin{pmatrix}
3 &\ominus 2& 1\\
\ominus 2&2&1\\
1&1&1
\end{pmatrix}$.
It is immediate to see that $A \in \pd_n(\smax^\vee)$ with the $\smax$-eigenvalues $\gamma_1=a_{11}=3$, $\gamma_2=a_{22}=2$ and $\gamma_3 = a_{33}= 1$. 
We get
\[B_1 = \gamma_{1}  I \ominus A= \begin{pmatrix}
3^{\circ} &2& \ominus 1\\
 2& 3&\ominus1\\
\ominus 1&\ominus 1& 3
\end{pmatrix}
\Rightarrow (B_1)^{\mathrm{adj}}=
\begin{pmatrix}
\mathbf{6}&\ominus 5&4\\
\mathbf{\ominus 5}&6^{\circ}&4^{\circ}\\
\mathbf{4}&4^{\circ}&6^{\circ}
\end{pmatrix} \Rightarrow v^{(1)} = \begin{pmatrix} 6\\\ominus 5\\4\end{pmatrix}\]
For the $\smax$-eigenvector associated to $\gamma_2=a_{22}$ we have 
\[B_2=  \gamma_{2}  I \ominus A  =\begin{pmatrix}
\ominus 3 & 2& \ominus 1\\
 2&2^{\circ} &\ominus 1\\
\ominus 1&\ominus 1& 2
\end{pmatrix} \Rightarrow(B_2)^{\mathrm{adj}}=\begin{pmatrix}
4^{\circ} &\mathbf{\ominus 4}&3^{\circ}\\
\ominus 4&\mathbf{\ominus 5}&\ominus 4\\
3^{\circ}&\mathbf{\ominus 4}&5^{\circ}
\end{pmatrix}\Rightarrow v^{(2)} = \begin{pmatrix} \ominus  4\\\ominus 5\\\ominus 4\end{pmatrix} \]
Also, we have 
\[B_3=\gamma_{3}  I \ominus A  = \begin{pmatrix}
\ominus 3 & 2& \ominus 1\\
2& \ominus  2 & \ominus 1\\
\ominus 1& \ominus 1&1^{\circ}
\end{pmatrix}\Rightarrow (B_3)^{\mathrm{adj}}=\begin{pmatrix}
3^{\circ}&3^{\circ}&\mathbf{\ominus 3}\\
3^{\circ}&4^{\circ}&\mathbf{\ominus 4}\\
\ominus 3&\ominus 4&\mathbf{5}
\end{pmatrix} \Rightarrow v^{(3)} = \begin{pmatrix} \ominus 3\\ \ominus 4\\5\end{pmatrix}.\]
It is easy to see that $v^{(1)}\in (\smax^\vee)^{n}\setminus\{\zero\}$ and

\[
A v^{(1)}=\gamma_1 v^{(1)}=\begin{pmatrix}
9&\ominus 8&7
\end{pmatrix}^T.
\]
Therefore $v^{(1)}$ is a strong $\smax$-eigenvector. Also, $v^{(2)}$ and  $v^{(3)}$are $\smax$-eigenvectors since $v^{(2)}$ and $v^{(3)}\in (\smax^\vee)^{n}\setminus\{\zero\}$ and
\[
A v^{(2)}=\begin{pmatrix}
7^{\circ}&
\ominus 7&
\ominus 6
\end{pmatrix}^T \balance\;\gamma_2 v^{(2)}=\begin{pmatrix}
\ominus  6&
\ominus 7&
\ominus 6
\end{pmatrix}^T,
\]
and 
\[
A v^{(3)}=\begin{pmatrix}
6^{\circ}&
6^{\circ}&
6
\end{pmatrix}^T \balance \;\gamma_3 v^{(3)}=\begin{pmatrix}
\ominus 4&
\ominus 5&
6
\end{pmatrix}^T.
\]
They are not strong eigenvectors.
\end{example}

 \begin{example}\label{ex_eig} For another example, let 
$A = \begin{pmatrix}
3 &2& 1\\
2&2&1\\
1&1&1
\end{pmatrix}$.
As in the previous example, $A \in \pd_n(\smax^\vee)$ with the $\smax$-eigenvalues $\gamma_1=a_{11}=3$, $\gamma_2=a_{22}=2$ and $\gamma_3 = a_{33}= 1$. 
We obtain this time: 
\[  v^{(1)} = \begin{pmatrix} 6\\5\\4\end{pmatrix}\; ,\quad
v^{(2)} = \begin{pmatrix}  4\\\ominus 5\\\ominus 4\end{pmatrix}
\; , \quad 
v^{(3)} = \begin{pmatrix} 3^{\circ}\\\ominus 4\\5\end{pmatrix}.\]
It is easy to see that $v^{(1)}\in (\smax^\vee)^{n}\setminus\{\zero\}$ and
\[
A v^{(1)}=\gamma_1 v^{(1)}=\begin{pmatrix}
9&8&7
\end{pmatrix}^T.
\]
Therefore $v^{(1)}$ is a strong $\smax$-eigenvector. Also, $v^{(2)}$ is a $\smax$-eigenvector but not a strong one
since $v^{(2)}\in (\smax^\vee)^{n}\setminus\{\zero\}$ and
\[
A v^{(2)}=\begin{pmatrix}
7^{\circ}&
\ominus 7&
\ominus 6
\end{pmatrix}^T \neq \;\gamma_2 v^{(2)}=\begin{pmatrix}
 6&
\ominus 7&
\ominus 6
\end{pmatrix}^T,
\]
and $v^{(3)}$ is a weak $\smax$-eigenvector and not a  $\smax$-eigenvector since it has one balanced entries. 
\end{example}

 \begin{example}\label{ex_eig2} Let 
$A = \begin{pmatrix}
3 &\ominus 2& 0\\
\ominus 2&2&1\\
0&1&1
\end{pmatrix}
\in \pd_n(\smax^\vee)$ with again $\smax$-eigenvalues $\gamma_1=a_{11}=3$, $\gamma_2=a_{22}=2$ and $\gamma_3 = a_{33}= 1$. 
We have $Av^{(1)}=\gamma_1v^{(1)}$, but 
 \[ v^{(1)}=\begin{pmatrix}
6\\
\ominus 5\\
3^{\circ}
\end{pmatrix} \notin (\smax^{\vee})^n\setminus \{\zero\}\enspace .\]
By \Cref{coro-simple-eigen} we know that there is at least one $\smax$-eigenvectors of the form 
 $ \begin{pmatrix}
6\\
\ominus 5\\
3
\end{pmatrix}$ or
 $\begin{pmatrix}
6\\
\ominus 5\\
\ominus 3
\end{pmatrix}$.
In this example, both are $\smax$-eigenvectors.
\end{example}






\subsection{Computing the leading $\smax$-eigenvector using Kleene's star}
In  \Cref{coro-unique-eigen} we gave a condition under which a $\smax$-eigenvector
associated to a $\smax$-eigenvalue of a tropical positive definite matrix
is unique up to a multiplicative constant.
We shall give here another characterization of such a $\smax$-eigenvector
using Kleene'star of matrices, see \Cref{star_smax}.
We shall first consider the case when the eigenvalue is
the greatest one, in which case, we speak of a
\new{leading  $\smax$-eigenvector}. 

The following well known result is usually written using the maximal cycle mean which is equal to the maximal (algebraic or geometric) eigenvalue of $A$.
\begin{lemma}\label{leq_unit}
For $A \in (\tmax)^{n \times n}$, $A^*$ exists (in $\tmax$) if and only if 
all its eigenvalues are $\leq \unit$, and then $A^*= I \oplus A \oplus \cdots \oplus A^{ n-1}$.
\end{lemma}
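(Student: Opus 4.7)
The plan is to reduce the statement to a combinatorial condition on cycle weights in the weighted digraph $\graph(A)$, using the well-known identification of the maximal $\tmax$-algebraic eigenvalue of $A$ with its maximal cycle mean.

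First I would reduce ``all $\tmax$-eigenvalues of $A$ are $\leq \unit$'' to ``$w(\cycle) \leq \unit$ for every cycle $\cycle$ in $\graph(A)$''. The $\tmax$-algebraic eigenvalues $\mu_{1}(A) \geq \cdots \geq \mu_{n}(A)$ of $A$ are by definition the roots of $P_A$ (\Cref{algebraic}), so all eigenvalues are $\leq \unit$ if and only if $\mu_{1}(A) \leq \unit$. By the identification of $\mu_{1}(A)$ with the maximal cycle mean $\max_{\cycle} w(\cycle)/|\cycle|$ recalled after \Cref{algebraic}, and since $\vgroup$ is divisible, this is in turn equivalent to $w(\cycle) \leq \unit$ for every cycle $\cycle$ of $\graph(A)$.

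For the ``only if'' direction, suppose $A^{*} \in (\tmax)^{n\times n}$ and assume for contradiction that some cycle $\cycle$ through a node $i$ satisfies $w(\cycle) > \unit$, i.e.\ $w(\cycle) > 0$ in $\vgroup$. The $k$-fold concatenation of $\cycle$ with itself is a path from $i$ to $i$ of length $k|\cycle|$ and weight $k \cdot w(\cycle)$, so $(A^{k|\cycle|})_{ii} \geq k \cdot w(\cycle)$. But $\{k \cdot w(\cycle) : k \geq 1\}$ is unbounded in the ordered group $\vgroup$, which contradicts $A^{*}_{ii} \in \tmax = \vgroup \cup \{\botelt\}$.

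For the ``if'' direction, assume every cycle in $\graph(A)$ has weight $\leq \unit$. From the definition of matrix multiplication one has $(A^{k})_{ij} = \bigoplus_{p} w(p)$, where $p$ ranges over paths of length $k$ from $i$ to $j$. If $k \geq n$, any such path must visit some node twice by the pigeonhole principle, and hence decomposes as a shorter path together with one or more simple cycles; removing the latter one by one produces a path $p'$ of length at most $n-1$ from $i$ to $j$ with $w(p') \geq w(p)$, since each removed cycle contributes weight $\leq \unit$. Therefore $(A^{k})_{ij} \leq \bigoplus_{0 \leq m \leq n-1}(A^{m})_{ij}$ for all $k \geq 0$, yielding $A^{*} = I \oplus A \oplus \cdots \oplus A^{n-1}$, which is well-defined as a finite tropical sum of matrices in $(\tmax)^{n\times n}$. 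The principal technical point, which is a standard induction on path length, is the path-shortening step: one must verify that iteratively deleting simple cycles from a walk of length $\geq n$ terminates in a path of length $\leq n-1$ without ever decreasing the weight.
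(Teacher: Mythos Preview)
The paper does not actually give a proof of this lemma; it is introduced as a ``well known result'' and stated without argument. Your proof supplies the standard argument via the maximal cycle mean and path shortening, which is exactly what the paper's one-line preamble alludes to, so in spirit you are aligned with the paper.

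There is, however, a genuine gap in your ``only if'' direction. You assert that $\{k\cdot w(\cycle):k\geq 1\}$ is unbounded in $\vgroup$ whenever $w(\cycle)>\unit$, but this is precisely the Archimedean property, and the paper works over an arbitrary divisible ordered abelian group $\vgroup$. For instance $\vgroup=\R^{2}$ with the lexicographic order is divisible and non-Archimedean: the set $\{k\cdot(0,1):k\geq 1\}$ is bounded above by $(1,0)$. What you actually need is that $\{(A^{m})_{ii}:m\geq 0\}$ has no \emph{supremum} in $\vgroup$, which is a strictly weaker claim and does hold in every ordered abelian group. The fix is short: if $M=\sup_{m}(A^{m})_{ii}$ existed, then from $(A^{m+|\cycle|})_{ii}\geq w(\cycle)+(A^{m})_{ii}$ (concatenate any closed walk of length $m$ at $i$ with one traversal of $\cycle$) one gets $M-w(\cycle)\geq (A^{m})_{ii}$ for all $m$, so $M-w(\cycle)$ is a strictly smaller upper bound, contradicting the definition of supremum. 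With this correction your argument is complete; the ``if'' direction via path shortening is fine as written.
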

The following result follows from idempotency of addition in $\smax$.
\begin{lemma}\label{eq_star}
For $A \in (\smax)^{n \times n}$ we have $
\tsum_{k=0,\ldots,m} A^{ k} = (I \oplus A)^{ m}$. \hfill \qed
\end{lemma}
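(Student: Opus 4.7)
The plan is to prove this by induction on $m$, with idempotency of $\oplus$ doing all the real work. Alternatively one can invoke the binomial expansion, since $I$ commutes with $A$, but induction is cleaner and avoids having to argue about integer coefficients collapsing under the idempotent sum.

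For the base case $m=0$, both sides equal $I$ by the convention $A^{0}=I$. For the inductive step, assume $\tsum_{k=0}^{m} A^{k} = (I \oplus A)^{m}$. Then distributivity gives
\[
(I \oplus A)^{m+1} = (I \oplus A)^{m} (I \oplus A) = \Big(\tsum_{k=0}^{m} A^{k}\Big) \oplus \Big(\tsum_{k=0}^{m} A^{k+1}\Big) = \tsum_{k=0}^{m} A^{k} \oplus \tsum_{k=1}^{m+1} A^{k}.
\]
The two partial sums overlap on the terms $A^{1},\dots,A^{m}$, and by idempotency of $\oplus$ each such repeated term collapses to a single copy. Hence the right-hand side equals $\tsum_{k=0}^{m+1} A^{k}$, completing the induction.

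There is essentially no obstacle: the argument uses only the semiring axioms (associativity, distributivity, the fact that $I$ is the multiplicative unit) together with the idempotency $X \oplus X = X$, which holds entrywise in $\smax^{n\times n}$ since it holds in $\smax$. The one point worth being mildly careful about is that one must know $I$ commutes with $A$ in order to write $(I\oplus A)^{m+1} = (I\oplus A)^m(I\oplus A)$ and then expand distributively without worrying about the order of $I^{m-k}$ and $A^{k}$; but this is automatic because $I$ is the identity. No hypothesis on $A$ (invertibility, signedness, positive definiteness, or existence of $A^{\ast}$) is needed, which is why the lemma is stated in its full generality over $\smax$.
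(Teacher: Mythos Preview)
Your proof is correct and matches the paper's approach: the paper simply states that the result follows from idempotency of addition in $\smax$ and marks it with \qed, giving no further details. Your induction argument spells out exactly this idempotency-based collapse.
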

\begin{lemma}\label{existence_star}
If $A \in (\smax)^{n \times n}$ and $|A|^*$ exists, then $A^{*} \in (\smax)^{n \times n}$ exists.
\end{lemma}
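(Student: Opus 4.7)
The plan is to reduce the claim to the relative completeness of $\smax$ for its natural order $\preceq$. Set $S_m := \tsum_{k=0}^{m} A^{k}$; since $\oplus$ is idempotent, the existence of $A^*$ as an element of $\smax^{n \times n}$ amounts to showing that the non-decreasing sequence $(S_m)$ admits a supremum in $\smax^{n \times n}$.

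First, I would control the modulus of each partial sum. By Property~\ref{prop-modulus}, the absolute value $|\cdot|\colon \smax \to \smax^{\oplus}$ is a morphism of semirings, so it extends entrywise to matrices and commutes with matrix addition and multiplication. This yields $|S_m| = \tsum_{k=0}^{m} |A|^{k}$ in $\tmax^{n\times n}$. The hypothesis that $|A|^{*}$ exists in $\tmax^{n\times n}$ then forces $|S_m| \preceq |A|^*$ for every $m$.

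Next, I would lift this modulus bound to a bound in the natural order of $\smax$. For each pair $(i,j)$, set $c_{ij} := ((|A|^*)_{ij})^{\circ}$, a balanced element satisfying $|c_{ij}| = (|A|^*)_{ij}$. Since $|(S_m)_{ij}| \preceq |c_{ij}|$ and $c_{ij} \in \smax^{\circ}$, Property~\ref{property-preceq}(4) yields $(S_m)_{ij} \preceq c_{ij}$ for every $m$ and every $(i,j)$. Hence the entrywise non-decreasing sequence $(S_m)$ is bounded above, entrywise, by the constant matrix $C=(c_{ij}) \in \smax^{n\times n}$.

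To conclude, I would invoke the relative completeness of $\smax$ for $\preceq$, recalled just before \Cref{star_smax}: any upper-bounded non-decreasing sequence admits a supremum in $\smax$. Applied entrywise, this gives a matrix $A^* := \sup_m S_m$ in $\smax^{n\times n}$; idempotency of $\oplus$ identifies this supremum with the infinite sum $\tsum_{k \geq 0} A^{k}$, so $A^* \in \smax^{n\times n}$. The only non-routine step is the passage from a modulus bound in $\tmax$ to a bound in $(\smax,\preceq)$, which crucially exploits that a balanced element is the top of its fiber under $|\cdot|$.
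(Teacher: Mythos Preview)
Your proof is correct, but it takes a different route from the paper's. The paper also starts with the non-decreasing sequence $S_m=\tsum_{k=0}^{m}A^{k}$ and the identity $|S_m|=\tsum_{k=0}^{m}|A|^{k}$, but then observes via \Cref{leq_unit} that $|S_m|$ is \emph{stationary} equal to $|A|^*$ for all $m\geq n$. Since the fiber $\{B\in\smax^{n\times n}:|B|=|A|^*\}$ is finite (each entry has at most three preimages under $|\cdot|$), a non-decreasing sequence taking values in this finite set must itself become stationary at some $m_0\geq n$; hence $A^*=S_{m_0}$. So the paper avoids invoking relative completeness altogether and obtains the stronger conclusion that the series is eventually constant, not merely that a supremum exists. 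This extra information is actually used later, in the proof of \Cref{star_star1}, where the authors pick a common index $m'$ at which both $\{S_m(A)\}$ and $\{S_m(\Azero)\}$ have stabilized. Your argument, by contrast, bounds $S_m$ above by the balanced matrix $(|A|^*)^{\circ}$ and appeals to relative completeness; this is perfectly valid for the lemma as stated, and nicely isolates the role of the balanced cover, but it does not by itself give stationarity.
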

\begin{proof}
$\{\tsum_{k=0,\ldots,m} A^{ k}\}_m$ is a non-decreasing sequence with respect to $m$ for the order relation $\preceq$ (\Cref{partial_order}) and
its absolute value $|\tsum_{k=0,\ldots,m} A^{ k}|=  \tsum_{k=0,\ldots,m} |A|^{ k}$
which is stationary for $m\geq n$, and equal to $|A|^*$, by \Cref{leq_unit}.
So for $m \geq n$ the sequence is non-decreasing but can only take a finite number of values (the matrices $B$ such that $|B|=|A|^*$). Therefore, there exists $m_0\geq n$ such that $\tsum_{k=0,\ldots,m} A^{ k}$ is stationary for $m\geq m_0$.
\end{proof}

 We first state the main result of this section, which computes
the vector $v^{(1)}$
for a matrix  $A\in \pd_n(\smax^\vee)$ as in \Cref{balance-adj}, 
using Kleene's star of the matrix $\gamma^{-1} A$. 
\begin{theorem}\label{result_pro}
 Let $A\in \pd_n(\smax^\vee)$, $\gamma_k$ and $B_k$ be as in \Cref{balance-adj}.
Assume that $\gamma=\gamma_1$ is simple as an algebraic $\smax$-eigenvalue of $A$, that is $\gamma_1\succ \gamma_2$ 
Then, we have 
\[ v^{(1)}=(\gamma I \ominus A )^{\adj}_{:,1}=\gamma^{n-1} (\gamma^{-1}A)^*_{:,1}\enspace .\]
Moreover $A v^{(1)}= \gamma  v^{(1)}$.
In particular, when $v^{(1)} \in (\smax^\vee)^n$, $v^{(1)}$ is the unique
leading $\smax$-eigenvector, and this is a strong $\smax$-eigenvector.
\end{theorem}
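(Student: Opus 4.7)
The first equality is just the definition \eqref{vk}, so the content of the theorem is the Kleene-star formula and the strong eigenvalue relation $A v^{(1)} = \gamma v^{(1)}$. The plan is to factor out $\gamma$ and reduce to statements about $C := \gamma^{-1} A$. Since $\gamma = a_{11} \in \smax^\vee \setminus \{\zero\}$ is invertible, one has $\gamma I \ominus A = \gamma (I \ominus C)$, and the $(n-1)$-homogeneity of the adjugate gives $(\gamma I \ominus A)^\adj = \gamma^{n-1}(I \ominus C)^\adj$. By \Cref{tropical_eigs}, the $\tmax$-eigenvalues of $|A|$ are its diagonal entries, so those of $|C|$ are $\preceq \unit$; by \Cref{leq_unit} and \Cref{existence_star}, $C^*$ exists. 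It therefore suffices to establish (a) $(I \ominus C)^\adj_{:,1} = C^*_{:,1}$ and (b) $C C^*_{:,1} = C^*_{:,1}$; scaling by $\gamma^n$ then yields $A v^{(1)} = \gamma v^{(1)}$.

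For (a), I will expand $(I \ominus C)^\adj_{i,1}=(\ominus \unit)^{i+1}\det((I\ominus C)[\hat{1},\hat{i}])$ via Leibniz and reorganize the sum by the cycle structure of the permutations $\tilde\sigma$ of $[n]$ with $\tilde\sigma(1) = i$. The cycle of $\tilde\sigma$ through $1$ yields, after removing the edge $1 \to i$ (which is absent from the cofactor's product over $k \neq 1$), a simple path $P$ from $i$ to $1$ in $\G(C)$; the remaining vertices $U_P := [n] \setminus P$ carry the other cycles. Combining the signs from $\sgn(\tilde\sigma)$, the $\ominus$ factors on non-fixed entries, and the expansion of the diagonal factors $(\unit \ominus C_{kk})$ at fixed points in $U_P$, the formula collapses to
\begin{equation*}
(I \ominus C)^\adj_{i,1} \;=\; \bigoplus_{P \,:\, i \to 1 \text{ simple}} w(P) \cdot \det(I \ominus C[U_P, U_P]).
\end{equation*}
The decisive claim is that $\det(I \ominus C[U_P, U_P]) = \unit$ (as an equality in $\smax$, not just balance). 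Because $1 \notin U_P$ and $\gamma$ is simple, $|C_{kk}| \prec \unit$ strictly for all $k \in U_P$; combined with the strict off-diagonal PD inequality $|a_{k\ell}|^2 \lsign a_{kk} a_{\ell\ell}$ of \Cref{def_pd1} (applied around any cycle and transferred to $C$), this forces $|w(\sigma)| \prec \unit$ for every cycle $\sigma$ in $C[U_P, U_P]$. Hence in the Leibniz expansion the identity permutation contributes $\unit$ while every other permutation contributes strictly smaller modulus, so the sum equals $\unit$ in $\smax$. Formula (a) then reduces to $C^*_{i,1} = \bigoplus_{P \text{ simple}} w(P)$, which holds because any non-simple walk from $i$ to $1$ decomposes as a shorter walk plus an extracted cycle of modulus $\prec \unit$ and is thus absorbed in tropical addition by the shorter walk.

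For (b), the relation $C^* = I \oplus C C^*$ directly gives $C^*_{k,1} = (C C^*)_{k,1}$ when $k \neq 1$. For $k = 1$, the argument in (a) shows $C^*_{1,1} = \unit$ (the only walks from $1$ to $1$ of modulus $\unit$ are the empty walk and the positive self-loop $C_{11} = \unit$), while $(C C^*)_{1,1}$ contains the term $C_{11} C^*_{1,1} = \unit$ and every other summand $C_{1j} C^*_{j,1}$ represents a cycle through $1$ and hence has modulus $\prec \unit$; therefore $(C C^*)_{1,1} = \unit = C^*_{1,1}$. This proves $A v^{(1)} = \gamma v^{(1)}$. For the ``in particular'' clause, when $v^{(1)} \in (\smax^\vee)^n$, \Cref{coro-unique-eigen} applied with $j = k = 1$ shows that every $\smax$-eigenvector for $\gamma$ is a multiple of $v^{(1)}$; combined with the strong relation just proved, $v^{(1)}$ is the unique leading $\smax$-eigenvector and is strong. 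The main obstacle I expect is the sign bookkeeping in the Leibniz expansion of (a), together with verifying that the strict inequalities supplied by \Cref{def_pd1} and by simplicity of $\gamma$ make $\det(I \ominus C[U_P, U_P]) = \unit$ an equality in $\smax$ rather than merely a balance, which is precisely what promotes (a) from a balance to a true equality.
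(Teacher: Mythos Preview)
Your approach is correct and reaches the same conclusion, but it is organized quite differently from the paper's proof. The paper does not expand the adjugate of $I\ominus C$ directly. Instead it first passes to the matrix $\Azero$ obtained by zeroing the diagonal of $A$, observes that $(I\ominus\gamma^{-1}A)^{\adj}_{:,1}=(I\ominus\gamma^{-1}\Azero)^{\adj}_{:,1}$ (since the two matrices differ only in the $(1,1)$ entry, which is deleted in every cofactor of the first column), then invokes a cited result \cite[Th.~2.39]{akian2018tropical} asserting that for a definite matrix $I\ominus\gamma^{-1}\Azero$ one has $(I\ominus\gamma^{-1}\Azero)^{\adj}=(\gamma^{-1}\Azero)^*$, and finally shows $(\gamma^{-1}\Azero)^*=(\gamma^{-1}A)^*$ via the elementary identity $I\oplus\gamma^{-1}A=I\oplus\gamma^{-1}\Azero$ combined with \Cref{eq_star}. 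So the paper's route is: strip the diagonal, apply a black-box adjugate$=$star identity, reinsert the diagonal.

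Your route bypasses both the diagonal-stripping trick and the external citation by expanding the cofactor combinatorially into a sum of simple-path weights times complementary principal minors, and then using the strict $\pd$ inequalities plus simplicity of $\gamma_1$ to kill those minors to~$\unit$. This is more self-contained and makes the role of the strict inequalities very transparent, at the cost of the sign bookkeeping you flag (which is indeed routine: the identity $(I-C)^{\adj}_{i,1}=\sum_{P}w(P)\det(I-C[U_P,U_P])$ is a polynomial identity over any commutative ring, hence transfers to $\smax$ verbatim). For part~(b) the paper is slightly slicker: it does not compute $(CC^*)_{1,1}$ explicitly but simply notes $[CC^*]_{11}\succeq C_{11}=\unit$, whence $C^*_{11}=\unit\oplus[CC^*]_{11}=[CC^*]_{11}$, which already suffices for $CC^*_{:,1}=C^*_{:,1}$ without ever needing $C^*_{11}=\unit$.
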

We decompose the proof of \Cref{result_pro} into the following lemmas
which hold for $A$ as in the theorem.


\begin{lemma}\label{lemmaIB}
Let $\Azero$ be the matrix obtained by replacing the diagonal entries of $A$ by $\zero$. 
Then, we have $(I \ominus \gamma^{-1} A)^\adj_{:,1}=(I \ominus \gamma^{-1} \Azero)^{\adj}_{:,1}$.
\end{lemma}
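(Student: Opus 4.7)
The plan is to show that the matrices $M := I \ominus \gamma^{-1} A$ and $M_0 := I \ominus \gamma^{-1} \Azero$ differ only at the position $(1,1)$, and then to observe that this single differing entry lies in the row that is deleted when forming any minor $M[\hat{1}, \hat{i}]$ used to compute the first column of the adjugate.

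First I would compare the entries of $M$ and $M_0$. The off-diagonal parts agree trivially, since $A$ and $\Azero$ coincide off the diagonal. For the diagonal, $(M_0)_{jj} = \unit$, while $M_{jj} = \unit \ominus \gamma^{-1} a_{jj}$. The simplicity of $\gamma = \gamma_1 = a_{11}$ gives $\gamma_1 \succ \gamma_2$, and combined with $\gamma_2 \succeq \cdots \succeq \gamma_n$ this yields $\gamma_j \prec \gamma$ for every $j \geq 2$; equivalently $\gamma^{-1} a_{jj} \prec \unit$. The first part of \Cref{property-preceq} (absorption by the element of larger modulus) then gives $M_{jj} = \unit \oplus (\ominus \gamma^{-1} a_{jj}) = \unit$ for every $j \geq 2$. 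On the other hand $M_{11} = \unit \ominus \unit = \unit^{\circ}$, which differs from $(M_0)_{11}=\unit$. Hence $M$ and $M_0$ agree everywhere except at the single position $(1,1)$.

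Next, by \Cref{def-adjugate}, the $i$-th entry of the first column of the adjugate is $(\ominus \unit)^{i+1} \det(M[\hat{1}, \hat{i}])$, and by definition $M[\hat{1}, \hat{i}]$ is obtained by deleting the first row of $M$ (together with its $i$-th column). Since the only position at which $M$ and $M_0$ differ, namely $(1,1)$, lies in that deleted row, it never enters the submatrix, so $M[\hat{1}, \hat{i}] = M_0[\hat{1}, \hat{i}]$ for every $i \in [n]$. Taking determinants and multiplying by the common sign $(\ominus \unit)^{i+1}$ gives the desired identity $(I \ominus \gamma^{-1} A)^{\adj}_{:,1} = (I \ominus \gamma^{-1} \Azero)^{\adj}_{:,1}$.

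I do not expect any real obstacle. The only substantive point is the absorption step $\unit \ominus \gamma^{-1} a_{jj} = \unit$ for $j \geq 2$, which hinges precisely on the hypothesis that $\gamma_1$ is the strictly dominant diagonal entry; without that assumption, the discrepancy between $M$ and $M_0$ would propagate to other diagonal positions and the argument would collapse.
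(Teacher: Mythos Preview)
Your proof is correct and follows essentially the same route as the paper's: both arguments establish that $I\ominus\gamma^{-1}A$ and $I\ominus\gamma^{-1}\Azero$ differ only at position $(1,1)$ (via the absorption $\unit\ominus\gamma^{-1}a_{jj}=\unit$ for $j\geq 2$, which uses the simplicity of $\gamma_1$), and then note that this entry lies in the deleted row for every minor defining the first column of the adjugate. Your write-up is simply more explicit about the absorption step and the role of the simplicity hypothesis than the paper's terse version.
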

\begin{proof}
We have $A = \Azero \oplus D$ where $D$ is the diagonal matrix with same diagonal entries as $A$, that is $D=(d_{ij})$ with $d_{ij}=\zero$ for $i \neq j$  and $d_{ij}=a_{ij}$ for $i=j$. The matrix $ I\ominus\gamma^{-1} D$ is a diagonal matrix with diagonal  entries equal to $\unit$ except for the first one which is 
equal to $\unit\ominus \gamma^{-1} d_{11}=\unit^\circ$.
So we get that
\[ 
( I \ominus \gamma^{-1} A)^{\adj}_{:,1}=( I \ominus \gamma^{-1} D \ominus \gamma^{-1}\Azero)^{\adj}_{:,1}\nonumber =(I \ominus \gamma^{-1}\Azero)^{\adj}_{:,1}\] 
where the last equality is by the fact that the entries of $I \ominus \gamma^{-1}\Azero$ and $I \ominus \gamma^{-1}D \ominus \gamma^{-1} \Azero$ only differ in the $(1,1)$ entry, which does not change the first column (and first row) of the adjugate matrix.
\end{proof}
\begin{definition}[Definite matrix]
A matrix $F=(f_{ij}) \in (\smax)^{n \times n}$ is definite if $\det(F)=f_{ii}=\unit\; \forall i \in [n]$.
\end{definition}
\begin{lemma}\label{lemma325}
Let $\Azero$ be as in \Cref{lemmaIB}. Then $I\ominus \gamma^{-1} \Azero$ is definite.
\end{lemma}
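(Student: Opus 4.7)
The plan is to verify the two conditions of definiteness for $F := I \ominus \gamma^{-1}\Azero$ separately: that $F_{ii}=\unit$ for all $i\in[n]$, and that $\det(F)=\unit$.

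The first condition is immediate from the definition. Since $\Azero$ has $\zero$ on its diagonal, $(\gamma^{-1}\Azero)_{ii}=\zero$ and hence $F_{ii}=\unit\ominus\zero=\unit$ for every $i$.

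For the determinant, I would expand using \Cref{det_s}:
\[\det(F)=\bigtsum_{\permutation\in\Sp_n}\sign(\permutation)\bigtprod_{i\in [n]}F_{i,\permutation(i)}\enspace .\]
The identity permutation contributes $\unit$. For any other permutation $\permutation$, decompose it into disjoint cycles; the fixed points contribute factors of $\unit$, while each non-trivial cycle $\cycle=(j_1,\dots,j_k)$ contributes the weight
\[\bigtprod_{\ell\in[k]}F_{j_\ell,j_{\ell+1}}=(\ominus\unit)^k\gamma^{-k}\bigtprod_{\ell\in[k]}a_{j_\ell,j_{\ell+1}}=(\ominus\unit)^k\gamma^{-k}w_A(\cycle)\enspace .\]
Applying Part~2 of \Cref{diag_cycle} to $A\in\pd_n(\smax^\vee)$ gives $|w_A(\cycle)|\lsign\bigtprod_{i\in[\cycle]}a_{ii}$. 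Combining with the fact that $\gamma=\gamma_1=a_{11}$ is the largest diagonal entry (so $a_{ii}\leqsign\gamma$ for every $i$), we obtain
\[|w_F(\cycle)|=\gamma^{-k}|w_A(\cycle)|\lsign\gamma^{-k}\bigtprod_{i\in[\cycle]}a_{ii}\leqsign\unit\enspace ,\]
and transitivity of $\prec$ and $\preceq$ inside $\smax^\oplus\cong\tmax$ upgrades this to $|w_F(\cycle)|\lsign\unit$.

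Taking the product over the non-trivial cycles of a non-identity $\permutation$ yields $|w_F(\permutation)|\lsign\unit$ as well. Therefore the sum $X:=\bigtsum_{\permutation\neq\mathrm{id}}\sign(\permutation)w_F(\permutation)$ satisfies $|X|\lsign\unit$. I would finish by observing that, in $\smax$, whenever $|X|\prec\unit$ the element $\unit\oplus X$ equals $\unit$ regardless of whether $X$ is positive, negative, or balanced (a short case-check using the equivalence classes of \Cref{def:sym_def}). Hence $\det(F)=\unit\oplus X=\unit$, completing the proof.

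The only minor subtlety, and where I would take most care, is confirming that the combination of a strict inequality on one factor with non-strict inequalities on others still yields a strict inequality for the overall cycle/permutation weight, and then that this strict dominance by $\unit$ is preserved through the alternating sum defining $\det(F)$; both rely on $|\cdot|$ being a morphism onto $\smax^\oplus$ (\Cref{prop-modulus}) where $\lsign$ coincides with the usual strict order.
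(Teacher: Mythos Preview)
Your proof is correct and follows essentially the same approach as the paper: both arguments reduce to showing that every non-trivial cycle of $\gamma^{-1}\Azero$ has weight of modulus $\prec\unit$, so that only the identity permutation contributes to $\det(F)$. The paper reaches this bound by first observing that each off-diagonal entry of $\gamma^{-1}A$ has modulus $\prec\unit$ (from $a_{ij}^{2}\lsign a_{ii}a_{jj}\leqsign\gamma^{2}$), whereas you invoke \Cref{diag_cycle} directly on $A$ and then scale; these are minor reorderings of the same computation.
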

\begin{proof}
Let $F=(f_{ij}):=I\ominus \gamma^{-1} \Azero$.
We have $f_{ii}=\unit\; \forall i \in [n]$, 
and $f_{ij}=\ominus \gamma^{-1} a_{ij}$ for $i\neq j\in [n]$.
Since $A$ is a $\pd$ matrix with all diagonal entries $\preceq \gamma$, we get that all off-diagonal entries of $\gamma^{-1} A $.
 Therefore all weights of cycles of  $\gamma^{-1} \Azero$ have an
absolute value $\prec \unit$.
This implies that $\det(F)=\unit$ and so $F$ is a definite matrix.
\end{proof}
\begin{lemma}[Corollary of \protect{\cite[Th.\ 2.39]{adi}}]
\label{adj_star1}
Let $A$ and $\Azero$ be as in \Cref{lemmaIB}.
Then, $(\gamma^{-1} \Azero)^*$ exists and we have $(\gamma I \ominus \Azero)^{\adj}=\gamma^{n-1} ( \gamma^{-1}\Azero)^*$.
\end{lemma}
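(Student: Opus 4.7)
The plan is to derive the lemma from \cite[Th.\ 2.39]{akian2018tropical} after (i) verifying that the Kleene star actually makes sense, and (ii) absorbing the factor $\gamma$. I would begin with (i), which is the substantive point. By \Cref{existence_star}, the star $(\gamma^{-1}\Azero)^*$ exists in $\smax$ as soon as $|\gamma^{-1}\Azero|^*$ exists in $\tmax$, and by \Cref{leq_unit} this amounts to showing that every cycle mean of $|\gamma^{-1}\Azero|$ is $\preceq \unit$. Since $\Azero$ carries $\zero$ on the diagonal, every cycle of $\graph(\Azero)$ has length $k>1$ and is supported on off-diagonal entries of $A$. Applying Part 2 of \Cref{diag_cycle} (which uses $A\in\pd_n(\smax^\vee)$), together with the fact that $\gamma=\gamma_1$ dominates every diagonal entry of $A$, any such cycle $\cycle$ satisfies
\[
|w(\cycle)| \lsign \bigtprod_{i\in[\cycle]} a_{ii} \preceq \gamma^{k}\enspace ,
\]
so that the cycle mean $\gamma^{-k}|w(\cycle)| \prec \unit$. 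The existence of the Kleene star follows.

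For (ii), I would factor $\gamma I \ominus \Azero = \gamma(I \ominus \gamma^{-1}\Azero)$. Each entry of the adjugate being a signed $(n-1)\times(n-1)$ minor, it scales by $\gamma^{n-1}$ under this factorization, so
\[
(\gamma I \ominus \Azero)^{\adj} = \gamma^{n-1}(I \ominus \gamma^{-1}\Azero)^{\adj}\enspace .
\]
It then suffices to prove $(I \ominus \gamma^{-1}\Azero)^{\adj} = (\gamma^{-1}\Azero)^*$. By \Cref{lemma325}, the matrix $F := I \ominus \gamma^{-1}\Azero$ is definite, which is exactly the hypothesis of \cite[Th.\ 2.39]{akian2018tropical}; invoking that theorem with $B := \gamma^{-1}\Azero$ (whose star exists by Step (i)) yields $F^{\adj} = B^*$, and reabsorbing the factor $\gamma^{n-1}$ produces the stated identity.

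The delicate point is concentrated in (i): Part 2 of \Cref{diag_cycle} relies crucially on $A$ being positive definite (not merely semi-definite) and on the cycles having length at least $2$, which in turn is guaranteed precisely because $\Azero$ has had its diagonal zeroed out. If one worked directly with $A$, the length-$1$ cycles $a_{ii}$ would saturate the bound, yielding cycle means only $\preceq \unit$ and obstructing the application of \Cref{leq_unit}; and without strict inequality the stabilization of the signed series, which \Cref{existence_star} turns into existence of the star in $\smax$, would fail. Once existence is in place, the closing identity is a purely algebraic consequence of the cited theorem and requires no further computation.
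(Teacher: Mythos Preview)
Your proof is correct and follows essentially the same line as the paper's: both argue that all cycle weights of $\gamma^{-1}\Azero$ have modulus $\prec\unit$, cite \Cref{lemma325} for definiteness of $I\ominus\gamma^{-1}\Azero$, apply \cite[Th.~2.39]{akian2018tropical}, and extract the factor $\gamma^{n-1}$. One small inaccuracy in your closing commentary: \Cref{leq_unit} only requires cycle means $\preceq\unit$, and indeed $(\gamma^{-1}A)^*$ does exist (see \Cref{star_star1}); the genuine reason one must pass to $\Azero$ is rather that $I\ominus\gamma^{-1}A$ fails to be definite (its $(1,1)$ entry is $\unit^\circ$), which is the hypothesis that \cite[Th.~2.39]{akian2018tropical} actually needs.
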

\begin{proof}
As said in the previous proof, all weights of cycles of  $\gamma^{-1} \Azero$ have an absolute value $\prec \unit$.
Since  $ \gamma^{-1} \Azero$ has diagonal entries equal to $\zero$, and $I\ominus \gamma^{-1} \Azero$ is definite by previous lemma, we get
the assertions in \Cref{adj_star1}, by applying \cite[Th.\ 2.39]{adi}. 
\end{proof}
\begin{lemma}\label{star_star1}
Let $A$ and $\Azero$ be as in \Cref{lemmaIB}. Then $ ( \gamma^{-1}\Azero)^*=
(\gamma^{-1}A)^*$.
\end{lemma}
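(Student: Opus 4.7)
The plan is to decompose $\gamma^{-1}A = B \oplus C$, where $B := \gamma^{-1}\Azero$ and $C := \gamma^{-1}D$ with $D$ the diagonal part of $A$, and then to sandwich $(B\oplus C)^*$ between $B^*$ and itself. First I would observe that $C\preceq I$ entrywise. Indeed, $C$ is diagonal with entries $\gamma^{-1}a_{ii}\in \smax^\oplus$, and since $\gamma=\gamma_1=a_{11}\succeq a_{ii}$ for every $i$, each such entry satisfies $\gamma^{-1}a_{ii}\preceq \unit$; the off-diagonal entries of $C$ coincide with those of $I$ (both equal to $\zero$).

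Next I would establish the auxiliary identity $(B\oplus I)^*=B^*$, which is a standard consequence of idempotency: expanding $(B\oplus I)^k$ as a sum over all words of length $k$ in $\{B,I\}$ and using that a word containing exactly $j$ occurrences of $B$ contributes the term $B^j$, idempotency collapses this to $\bigoplus_{j=0}^{k} B^j$. Taking the sum in $k$ yields $(B\oplus I)^* = \bigoplus_{k\geq 0} B^k = B^*$, which exists by \Cref{adj_star1}.

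Then I would use the compatibility of $\preceq$ with both $\oplus$ and $\odot$ in $\smax$ to propagate the sandwich $B\preceq B\oplus C\preceq B\oplus I$ to the powers: $B^k\preceq (B\oplus C)^k\preceq (B\oplus I)^k = \bigoplus_{j=0}^{k} B^j$. Summing over $k\in\{0,\ldots,m\}$ gives
\[
\bigoplus_{k=0}^{m} B^k \;\preceq\; \bigoplus_{k=0}^{m}(B\oplus C)^k \;\preceq\; \bigoplus_{k=0}^{m}\bigoplus_{j=0}^{k} B^j \;=\; \bigoplus_{j=0}^{m} B^j\enspace .
\]
The outer partial sums both converge to $B^*$ (and are stationary for $m\ge n$, since $|B|^*$ exists by the proof of \Cref{lemmaIB} and \Cref{existence_star}). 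Hence the middle partial sum is also stationary and tends to the same limit, which proves both that $(\gamma^{-1}A)^* = (B\oplus C)^*$ exists and that it equals $B^* = (\gamma^{-1}\Azero)^*$.

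The only delicate point is verifying that the $\preceq$-ordering of $\smax$ is monotone with respect to matrix products and powers; this reduces to the distributivity of $\odot$ over $\oplus$ together with the definition $x\preceq y\iff y=x\oplus y$, and is mostly routine. Once this is in hand the sandwich closes immediately because $B^*$ exists by \Cref{adj_star1}, so the main obstacle is bookkeeping rather than a substantive hurdle.
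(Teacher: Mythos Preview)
Your proof is correct and rests on the same key observation as the paper's: the diagonal entries of $\gamma^{-1}A$ lie in $\smax^\oplus$ and are $\preceq \unit$. The paper exploits this more directly, noting that it yields the \emph{equality} $I\oplus \gamma^{-1}A = I\oplus \gamma^{-1}\Azero$ (the diagonal is absorbed by $I$), and then applies \Cref{eq_star} to both sides at once: for $m'$ large enough that both partial sums are stationary, $(\gamma^{-1}\Azero)^* = (I\oplus\gamma^{-1}\Azero)^{m'} = (I\oplus\gamma^{-1}A)^{m'} = (\gamma^{-1}A)^*$. Your sandwich via monotonicity of $\preceq$ reaches the same conclusion with slightly more machinery; it works because antisymmetry of $\preceq$ collapses the squeeze to an equality at every $m$, but the paper's route avoids having to check monotonicity of matrix products under $\preceq$. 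One small imprecision: the partial sums $\bigoplus_{k\le m}B^k$ over $\smax$ are stationary from some $m_0\geq n$ on (cf.\ the proof of \Cref{existence_star}), not necessarily from $m=n$; this does not affect your argument.
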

\begin{proof}
Without loss of generality, we prove the result when $\gamma=\unit$.
Using \Cref{leq_unit} together with \Cref{existence_star}, we get that $A^*$ exist. Suppose that the sequences $\{\tsum_{k\in [m]} {A^k}\}_m$ and $\{\tsum_{k\in [m]} \Azero^k\}_m$ be stationary for $m\geq m_1$ and $m \geq m_2$, respectively. Let $m'=\max\{m_1,m_2\}$. Then we have
\[\Azero^*=\bigtsum_{k=0,\ldots, m'}\Azero^k = (I \oplus \Azero)^{m'} = (I \oplus A)^{m'} = \bigtsum_{k=0,\ldots,m'} A^k=A^*,\]
where the second and fourth equalities are by \Cref{eq_star}, and the third one is by definition of $\Azero$.
\end{proof}

\begin{proof}[Proof of \Cref{result_pro}]
Consider $A'=\gamma^{-1}A$. 
Using the multi-linearity of determinant, or using 
\cite[Cor.\ 2.35]{adi},
we get
\[v^{(1)}=(\gamma I\ominus A )^{\adj}_{:,1}=\gamma^{n-1} (I\ominus  A')^{\adj}_{:,1}\]
and using \Cref{lemmaIB},  \Cref{adj_star1} and \Cref{star_star1},  we get the respective equalities 
\[ (I\ominus A')^{\adj}_{:,1}= (I \ominus  \gamma^{-1}\Azero)^{\adj}_{:,1}= (\gamma^{-1}\Azero)^*_{:,1}=( \gamma^{-1} A)^*_{:,1}= (A')^*_{:,1}.\]
This shows the first assertion of \Cref{result_pro}.
Since  $(A')^*=I\oplus A'(A')^*$ and  $[A'(A')^*]_{11}\succeq A'_{11}=\unit$,
we get that $ (A')^*_{11}=\unit \oplus [A'(A')^*]_{11}=[A'(A')^*]_{11}$,
and so $A'(A')^*_{:,1}=(A')^*_{:,1}$, which with the first assertion,
shows the second assertion  of \Cref{result_pro}.
The last assertion follows from \Cref{coro-unique-eigen}.
\end{proof}
\begin{corollary}
Let  $A$ be as in  \Cref{result_pro}. Assume that all the entries 
of $A$ are positive or $\zero$, that is are in $\smax^{\oplus}$.
Then, $v^{(1)}$ has also positive or $\zero$ entries, and thus it is 
necessarily a strong $\smax$-eigenvector.
\end{corollary}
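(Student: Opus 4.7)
The plan is to apply the formula $v^{(1)} = \gamma^{n-1}(\gamma^{-1}A)^*_{:,1}$ from \Cref{result_pro} and track the signs of the entries through the Kleene star computation.

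First I would note that since $\gamma = \gamma_1 = a_{11} \in \smax^{\oplus} \setminus \{\zero\}$, the matrix $\gamma^{-1} A$ still has all its entries in $\smax^{\oplus}$. Then, because $\smax^{\oplus}$ is identified with the subsemiring $\tmax$ of $\smax$ (hence is closed under $\oplus$ and $\odot$), any finite sum of finite products of entries of $\gamma^{-1} A$ lies again in $\smax^{\oplus}$. In particular, every entry of $(I \oplus \gamma^{-1} A)^{ m} = \tsum_{k=0}^{m}(\gamma^{-1}A)^{ k}$ (using \Cref{eq_star}) is in $\smax^{\oplus}$ for every $m$, and passing to the stationary value (which exists by \Cref{existence_star}, since $|\gamma^{-1}A|^*$ exists because $\gamma$ dominates all diagonal entries) gives that every entry of $(\gamma^{-1}A)^*$ lies in $\smax^{\oplus}$.

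Consequently, $v^{(1)} = \gamma^{n-1}(\gamma^{-1}A)^*_{:,1}$ has all entries in $\smax^{\oplus}$. Moreover $v^{(1)}_1 \succeq \gamma^{n-1}\cdot \unit = \gamma^{n-1} \neq \zero$, since the diagonal of $(\gamma^{-1}A)^*$ is $\succeq \unit$. Hence $v^{(1)} \in (\smax^{\oplus})^n \setminus\{\zero\} \subseteq (\smax^{\vee})^n \setminus\{\zero\}$.

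The last assertion of \Cref{result_pro} then applies directly: whenever $v^{(1)} \in (\smax^{\vee})^n$, the vector $v^{(1)}$ is a strong $\smax$-eigenvector, i.e.\ $A v^{(1)} = \gamma\, v^{(1)}$. This finishes the argument. There is no real obstacle here; the only thing worth being careful about is that the sign-preservation argument must be applied \emph{after} clearing the diagonal-balance issue, which is already taken care of by \Cref{result_pro} since the formula $v^{(1)} = \gamma^{n-1}(\gamma^{-1}A)^*_{:,1}$ involves only $A$ itself (and not the potentially balanced matrix $\gamma I \ominus A$), so no cancellation creating $\smax^{\circ}$ entries can occur.
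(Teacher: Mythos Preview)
Your argument is correct and is exactly the route the paper intends: the corollary is stated without proof because it follows immediately from the Kleene-star formula in \Cref{result_pro}, together with the fact that $\smax^{\oplus}$ is a subsemiring (so all entries of $(\gamma^{-1}A)^*$ stay in $\smax^{\oplus}$), and then the last assertion of \Cref{result_pro} gives the strong-eigenvector conclusion. Your write-up simply makes explicit what the paper leaves to the reader.
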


\begin{remark}
The previous result shows that for a positive definite matrix with
positive or zero entries, the leading $\smax$-eigenvector of $A$ is positive.
This is not the case in general for the other eigenvectors. For
instance, in 
\Cref{ex_eig}, $v^{(2)}$ contains positive and negative coordinates, 
and since it is in $(\smax^\vee)^{n}$, this is the unique eigenvector associated to $\gamma_2$. Moreover, $v^{(3)}$ contains balance entries.
\end{remark}
\begin{remark}\label{not_signed}
Consider the matrix $A$ in \Cref{ex_eig2} and let $\gamma=3$. 
We have
\[(\gamma^{-1}A)^*=(\gamma^{-1}\Azero)^*=\begin{pmatrix}0 & \ominus (-1)&(-3)^\circ \\\ominus (-1) &0 &-2 \\  (-3)^\circ &  -2 &0 \end{pmatrix}\enspace.\]
So by \Cref{result_pro}, $v^{(1)}=\gamma^2 (\gamma^{-1}A)^*_{:,1}=
6 \odot \begin{pmatrix} 0 & \ominus (-1) &  (-3)^\circ  \end{pmatrix}^\intercal$.
\Cref{result_pro} also shows that $A v^{(1)}= \gamma v^{(1)}$.  However, $v^{(1)}$
is not in $(\smax^\vee)^n$, and the two possible $\smax$-eigenvectors
mentioned in \Cref{ex_eig2} are not strong  $\smax$-eigenvectors.
\end{remark}
The later remark is indeed a general result, as follows.
\begin{corollary}\label{coro-strong1}
Let  $A$ and $\gamma$ be as in  \Cref{result_pro}. 
If $v^{(1)}$ does not belong to $(\smax^\vee)^n$, then $A$ has no strong $\smax$-eigenvector associated to the eigenvalue $\gamma$.
\end{corollary}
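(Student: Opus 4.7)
The plan is to proceed by contradiction. Suppose $v \in (\smax^\vee)^n \setminus \{\zero\}$ satisfies $Av = \gamma v$. Since $v$ is in particular a $\smax$-eigenvector and $\gamma=\gamma_1$ is simple, \Cref{coro-simple-eigen}~iii) yields a scalar $\lambda \in \smax^\vee\setminus\{\zero\}$ with $v \balance \lambda v^{(1)}$; after replacing $v$ by $\lambda^{-1} v$ (still a strong $\smax$-eigenvector), I may assume $v \balance v^{(1)}$ coordinate-wise.

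The core idea is to translate the eigen-equation into a Kleene-star fixed-point identity. Setting $A' = \gamma^{-1}A$, the equation $Av=\gamma v$ becomes $A'v = v$, hence $(A')^k v = v$ for every $k\geq 0$, and summing up, using the existence of $(A')^*$ proved in the course of \Cref{result_pro}, gives $v = (A')^* v$. Because off-diagonal entries of $A'$ satisfy $|A'_{ij}| \prec \unit$ (from the strict $\pd$ inequality in \Cref{def_pd1}), every cycle of length $\geq 2$ in $\graph(A')$ has modulus $\prec \unit$, while the trivial path contributes $\unit$; in particular $(A')^*_{ii}=\unit$ for every $i$. Extracting the $i$-th coordinate then yields
\[
v_i \;=\; v_i \oplus T_i, \qquad T_i := \bigtsum_{j\neq i} (A')^*_{ij}\, v_j.
\]

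The heart of the argument is to pick an index $i$ with $v^{(1)}_i \in \smax^\circ \setminus \{\zero\}$ (which exists by the hypothesis $v^{(1)}\notin (\smax^\vee)^n$) and show that $T_i$ is balanced of modulus $|v^{(1)}_i|$. For the summand $j=1$: from $v^{(1)} = \gamma^{n-1}(A')^*_{:,1}$ in \Cref{result_pro} one reads off $v^{(1)}_1 = \gamma^{n-1}$, hence $v_1 = \gamma^{n-1}$ by \Cref{equality_balance}~ii); the factor $(A')^*_{i1}$ is balanced of modulus $\gamma^{-(n-1)}|v^{(1)}_i|$, so this summand is balanced of modulus exactly $|v^{(1)}_i|$. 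For the summands $j\neq i,1$: the balance $v\balance v^{(1)}$ enforces $|v_j|\preceq |v^{(1)}_j|$; combining with the sub-multiplicativity of the Kleene star in $\tmax$, namely $|(A')^*_{ij}|\cdot|(A')^*_{j1}| \preceq |(A')^*_{i1}|$, together with $|v^{(1)}_j|=\gamma^{n-1}|(A')^*_{j1}|$, I obtain $|v_j (A')^*_{ij}| \preceq |v^{(1)}_i|$ (in the degenerate case $v^{(1)}_j=\zero$ one has $v_j=\zero$ and the summand simply vanishes). Thus the $j=1$ summand attains the maximum modulus of $T_i$ and is balanced; by the standard $\smax$ arithmetic rule that a balanced term absorbs any same-modulus signed term, $T_i$ itself is balanced of modulus $|v^{(1)}_i|$.

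To conclude, the balance $v_i \balance v^{(1)}_i$ with $v_i \in \smax^\vee$ and $v^{(1)}_i$ balanced non-zero forces $|v_i|\preceq |v^{(1)}_i|=|T_i|$; hence $v_i \oplus T_i$ remains balanced of non-zero modulus $|v^{(1)}_i|$, which cannot equal the signed $v_i$ in the identity $v_i = v_i \oplus T_i$. This contradicts the existence of a strong $\smax$-eigenvector. The main delicate point is the modulus control of the summands $j\neq i,1$ in $T_i$, which crucially relies on sub-multiplicativity of the Kleene star and on the explicit form $v^{(1)} = \gamma^{n-1}(A')^*_{:,1}$; everything else is an unfolding of definitions.
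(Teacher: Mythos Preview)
Your proof is correct, but it takes a more hands-on route than the paper's. Both arguments share the same three ingredients: the relation $v\balance \lambda v^{(1)}$ from \Cref{coro-simple-eigen}, the fixed-point identity $v=(A')^*v$ coming from the strong eigen-equation, and the explicit formula $v^{(1)}=\gamma^{n-1}(A')^*_{:,1}$ from \Cref{result_pro}. The difference lies in how the contradiction is extracted.

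The paper observes that $v=(A')^*v \succeq (A')^*_{:,1}\,v_1$; since $v_1=\lambda v^{(1)}_1$ (both sides signed), this rewrites as $\lambda v^{(1)}\preceq v$. Combining $\lambda v^{(1)}\preceq v$, $\lambda v^{(1)}\balance v$, and $v\in(\smax^\vee)^n$, \Cref{equality_balance}~(iii) forces $\lambda v^{(1)}=v$, whence $v^{(1)}\in(\smax^\vee)^n$, a contradiction in three lines. Your approach instead isolates a single bad coordinate $i$ and computes the term $T_i$ explicitly, controlling each summand via the Kleene-star sub-multiplicativity $|(A')^*_{ij}|\odot|(A')^*_{j1}|\preceq|(A')^*_{i1}|$ (equivalently $(|A'|^*)^2=|A'|^*$). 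This is a valid and self-contained calculation, but it essentially re-derives, at the level of one coordinate, what the paper gets globally and for free from the order relation $\preceq$ and \Cref{equality_balance}~(iii). The paper's argument is shorter and makes clearer why the result holds; your argument has the merit of exposing precisely where the balanced entry of $v^{(1)}$ obstructs the equation $v_i=v_i\oplus T_i$.
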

\begin{proof}
Assume that $w$ is a strong $\smax$-eigenvector associated to the eigenvalue $\gamma$.
This means that $w\in (\smax^\vee)^n\setminus\{\zero\}$ and
$\gamma w=A w$. This implies 
$w= (\gamma^{-1}A)^* w$ and so $w\succeq  (\gamma^{-1}A)^*_{:,1} w_1$.
Since $w$ is necessarily a $\smax$-eigenvector, \Cref{coro-simple-eigen}
shows that there exists $\lambda \in \smax^\vee\setminus\{\zero\}$ such that
$w\balance \lambda v^{(1)}$.
We also have $v^{(1)}_1\in \smax^\vee\setminus\{\zero\}$, 
so  $w_1=\lambda v^{(1)}_1$, by \Cref{equality_balance}.
By \Cref{result_pro}, we have 
$v^{(1)}=\gamma^{n-1}(\gamma^{-1}A)^*_{:,1}= v^{(1)}_1(\gamma^{-1}A)^*_{:,1}$, therefore 
$ \lambda v^{(1)}= w_1 (\gamma^{-1}A)^*_{:,1}\preceq w$.
Since $w\balance \lambda v^{(1)}$, and  $w\in (\smax^\vee)^n\setminus\{\zero\}$
this implies, by \Cref{equality_balance}, that $w=\lambda v^{(1)}$ and so $v^{(1)}\in (\smax^\vee)^n\setminus\{\zero\}$, a contradiction.
\end{proof}
\subsection{Computing all $\smax$-eigenvectors using Kleene's star}\label{subsec:kleen}
\begin{theorem}\label{star-general}
Let $A$, $\gamma_k$ and $B_k$ be as in \Cref{balance-adj}.
Assume that $\gamma=\gamma_k$ is simple as an algebraic $\smax$-eigenvalue of $A$.
Let $D^{(k)}$ be the $n\times n$ diagonal matrix with diagonal entries
$\gamma_1,\ldots, \gamma_{k-1}, \zero,\ldots, \zero $,  
let $A^{(k)}$ be the complementary of $D^{(k)}$ in $A$, that is $A^{(k)}$ 
has entries $[A^{(k)}]_{ij}= a_{ij}$ when $i\neq j$ or $i=j\geq k$, and $\zero$
elsewhere, so that $A= D^{(k)}\oplus  A^{(k)}$.
Then, we have 
\[ v^{(k)}=(B_k)^\adj_{:,k}= \lambda_k ((\gamma I\ominus D^{(k)})^{-1} A^{(k)})_{:,k}^*\quad \text{with}\; \lambda_k= (\ominus \unit)^{k-1} \gamma_1\cdots \gamma_{k-1}\gamma ^{n-k} \enspace ,\]
and $v^{(k)}$ satisfies $A^{(k)} v^{(k)}=\gamma v^{(k)}\ominus D^{(k)} v^{(k)}$.

Moreover, if $v^{(k)}\in (\smax^\vee)^n$, $v^{(k)}$ is the unique
$\smax$-eigenvector associated to the eigenvalue $\gamma$ (up to a multiplicative factor). 
\end{theorem}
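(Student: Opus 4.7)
The plan is to extend the strategy of \Cref{result_pro} to a general simple eigenvalue $\gamma=\gamma_k$. The key idea is to factor out the ``large'' diagonal entries $\gamma_1,\ldots,\gamma_{k-1}$ through the invertible diagonal matrix $E:=\gamma I\ominus D^{(k)}$, whose entries are $\ominus\gamma_l$ for $l<k$ (since $|\gamma_l|\succ|\gamma|$) and $\gamma$ for $l\geq k$. I would first verify the factorization $B_k=E(I\ominus M)$ with $M=E^{-1}A^{(k)}$, and observe that $\prod_{l\neq k} E_{ll}=(\ominus\unit)^{k-1}\gamma_1\cdots\gamma_{k-1}\gamma^{n-k}=\lambda_k$. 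Since $E$ is diagonal, the adjugate factorization $(EX)^\adj=X^\adj E^\adj$ holds as an equality in $\smax$, yielding $(B_k)^\adj_{:,k}=\lambda_k(I\ominus M)^\adj_{:,k}$.

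The heart of the proof is then to show $(I\ominus M)^\adj_{:,k}=M^*_{:,k}$. I would begin by analyzing the cycle weights of $|M|$: using the inequality $|a_{ij}|^2\lsign a_{ii}a_{jj}$ characterizing $\pd$ matrices (\Cref{def_pd1}), every cycle of length $\geq 2$ has modulus $\prec\unit$, while $M_{ii}=\zero$ for $i<k$, $M_{kk}=\unit$, and $|M_{ii}|\prec\unit$ for $i>k$ (using simpleness $\gamma_{k+1}\prec\gamma$). Hence the unique saturated cycle of $|M|$ is the self-loop at $k$, and $M^*$ exists by \Cref{existence_star}. Let $\tilde M$ be $M$ with $\tilde M_{kk}=\zero$; a standard Kleene-star manipulation using the factorization $(P\oplus Q)^*=P^*(QP^*)^*$ (first removing the loop at $k$, then absorbing the small off-critical diagonal entries) gives $M^*_{:,k}=\tilde M^*_{:,k}=\tilde M_0^*{}_{:,k}$, where $\tilde M_0$ is the strictly off-diagonal part. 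On the other hand, $I\ominus\tilde M$ has all diagonal entries equal to $\unit$ (since each $|\tilde M_{ii}|\prec\unit$ is absorbed by $\unit$) and $\det(I\ominus\tilde M)=\unit$ (every non-identity permutation contributes a product whose modulus is $\prec\unit$), so $I\ominus\tilde M=I\ominus\tilde M_0$ is definite. Then the same tool used in \Cref{adj_star1} (namely \cite[Th.~2.39]{akian2018tropical}) applied to $\tilde M_0$ gives $(I\ominus\tilde M_0)^\adj=\tilde M_0^*$. Finally, $A'=I\ominus M$ and $A''=I\ominus\tilde M$ differ only at position $(k,k)$, so any minor $\det(A'[\hat k,\hat i])$ or $\det(A'[\hat j,\hat k])$ eliminates that entry and agrees with the corresponding minor of $A''$; hence $(A')^\adj_{:,k}=(A'')^\adj_{:,k}=M^*_{:,k}$. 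Combining gives $v^{(k)}=\lambda_k M^*_{:,k}$.

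For the second assertion I would exploit $M^*=I\oplus MM^*$. Taking column $k$ gives $M^*_{:,k}=e_k\oplus MM^*_{:,k}$; left-multiplying by $E$ and using $EM=A^{(k)}$, $Ee_k=\gamma e_k$, then scaling by $\lambda_k$, one obtains
\[
Ev^{(k)}=\lambda_k\gamma\,e_k\oplus A^{(k)}v^{(k)},\qquad Ev^{(k)}=\gamma v^{(k)}\ominus D^{(k)}v^{(k)}.
\]
The claim then reduces to absorbing $\lambda_k\gamma\,e_k$ into $A^{(k)}v^{(k)}$, i.e.\ showing $(A^{(k)}v^{(k)})_k=\lambda_k\gamma$. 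Expanding, $(A^{(k)}v^{(k)})_k=\lambda_k\gamma\oplus\lambda_k\bigoplus_{j\neq k}a_{kj}M^*_{jk}$ using $M^*_{kk}=\unit$; for each $j\neq k$ and each path $j=i_0\to\cdots\to i_p=k$ in $|M|$, concatenating the edge $k\to j$ produces a closed cycle of length $\geq 2$ through the $\pd$ matrix $A$, and \Cref{diag_cycle} (strict version) together with the estimate $\gamma_{i_s}|E^{-1}_{i_s i_s}|\preceq\unit$ yields $|a_{kj}M^*_{jk}|\prec\gamma$. Hence the off-diagonal contribution has modulus $\prec\gamma$ and is absorbed by $\lambda_k\gamma$.

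The third assertion is immediate from \Cref{coro-unique-eigen}: when $v^{(k)}\in(\smax^\vee)^n$, the entry $v^{(k)}_k=\lambda_k M^*_{kk}=\lambda_k$ is nonzero, so column $k$ of $(B_k)^\adj$ lies in $(\smax^\vee)^n\setminus\{\zero\}$, forcing any $\smax$-eigenvector associated to $\gamma$ to be a multiple of $v^{(k)}$. The main obstacle I anticipate is the careful cycle-weight bookkeeping in the second step, uniformly handling indices on both sides of $k$ and tracking \emph{which} inequalities are strict, both to isolate the loop at $k$ as the unique saturated cycle and to verify that off-diagonal terms in $(A^{(k)}v^{(k)})_k$ are strictly dominated so that the $\lambda_k\gamma\,e_k$ correction disappears.
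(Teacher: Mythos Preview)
Your proposal is correct and follows essentially the same approach as the paper: factor $B_k=E(I\ominus M)$ with $E=\gamma I\ominus D^{(k)}$ and $M=E^{-1}A^{(k)}$, reduce the adjugate column to $(I\ominus M)^\adj_{:,k}$, and identify the latter with $M^*_{:,k}$ via the chain of lemmas analogous to \Cref{lemmaIB}--\Cref{star_star1} (your $\tilde M_0$ is exactly the paper's $\Azero'=E^{-1}\Azero$, and the paper gets $M^*=(\Azero')^*$ simply from $I\oplus M=I\oplus\Azero'$, which is a shorter route than the $(P\oplus Q)^*=P^*(QP^*)^*$ manipulation you sketch). The one place where you work noticeably harder than necessary is the second assertion: rather than bounding $|a_{kj}M^*_{jk}|\prec\gamma$ term by term to absorb the $\lambda_k\gamma\,e_k$ correction, the paper observes directly that $(MM^*)_{kk}\succeq M_{kk}=\unit$, hence $M^*_{kk}=\unit\oplus(MM^*)_{kk}=(MM^*)_{kk}$, which together with the trivial off-$k$ identity gives $MM^*_{:,k}=M^*_{:,k}$ and thus $A^{(k)}v^{(k)}=Ev^{(k)}$ in one line.
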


\begin{proof}
Let $A$, $\gamma_k$, $B_k$, $D^{(k)}$ and $A^{(k)}$ be as in the theorem.
We have $A= D^{(k)}\oplus  A^{(k)}$,
so $B_k= \gamma I\ominus A= \gamma I\ominus D^{(k)}\ominus  A^{(k)}$.
Since $\gamma_1\succeq \cdots \succeq \gamma_{k-1}\succ \gamma_k=\gamma
\succ \gamma_{k+1}\cdots \gamma_n$, the matrix 
$\gamma I\ominus D^{(k)}$ is diagonal with diagonal entries
equal to $ \ominus \gamma_1, \ldots, \ominus \gamma_{k-1},\gamma, \cdots 
\gamma$, so is invertible.
Hence, 
\begin{align*}
 (B_k)^\adj&=(I\ominus (\gamma I\ominus D^{(k)})^{-1} A^{(k)})^\adj  (\gamma I\ominus D^{(k)})^\adj\\
&= (I\ominus (\gamma I\ominus D^{(k)})^{-1} A^{(k)})^\adj 
\det(\gamma I\ominus D^{(k)}) (\gamma I\ominus D^{(k)})^{-1}
\enspace .\end{align*}
Then, setting $\lambda_k= (\ominus \unit)^{k-1} \gamma_1\cdots \gamma_{k-1}\gamma ^{n-k}$, we get that the $k$-th column satisfies
$v^{(k)}=(B_k)^\adj_{:,k}= \lambda_k  (I\ominus (\gamma I\ominus D^{(k)})^{-1} A^{(k)})^\adj_{:,k}$. Let $A'= (\gamma I\ominus D^{(k)})^{-1} A^{(k)}$. Then, the formula of $v^{(k)}$ in the theorem holds as soon as 
$ (A')^*_{:,k}= (I\ominus A')^\adj_{:,k}$.

One shows this property by the same arguments as for the proof of \Cref{result_pro}. 

First, writting $A'=\Azero'\oplus D$ where $D$ is the diagonal matrix with
same diagonal entries as $A'$, 
and $\Azero'=(\gamma I\ominus D^{(k)})^{-1} \Azero$,
we see that $I\ominus D$ is a diagonal matrix 
with entries equal to $\unit$ except for the $k$-th entry which is equal to $\unit^\circ$. So by the same arguments as in \Cref{lemmaIB},
we get $(I\ominus A')^\adj_{:,k}= (I\ominus \Azero')^\adj_{:,k}$.

Second, although some off-diagonal entries of $A'$ may have an absolute value 
$\succeq \unit$, all weights of cycles of length $\geq 2$
have an absolute value $\prec \unit$.
So, as in the proof of \Cref{lemma325}, we have that $I\ominus A'$ is
definite. 

Then, the arguments of \Cref{adj_star1} and \Cref{star_star1} apply 
when replacing the matrix $\gamma^{-1} A$ by $A'$ and $\gamma^{-1} \Azero$
by $\Azero'$, and shows that
$(\Azero')^*$ exists and $(I \ominus \Azero')^{\adj}=(\Azero')^*$
and $ (\Azero')^*=(A')^*$.

All together, this implies that $ (A')^*_{:,k}=  (\Azero')^*_{:,k}=(I \ominus \Azero')^{\adj}_{:,k}=(I\ominus A')^\adj_{:,k}$, which is what we wanted to prove, and finishes the proof of the first assertion.

For the last assertion, we proceed again as in the proof of \Cref{result_pro}. 
Since  $(A')^*=I\oplus A'(A')^*$ and  $[A'(A')^*]_{kk}\succeq A'_{kk}=\unit$,
we get that $ (A')^*_{kk}=\unit \oplus [A'(A')^*]_{kk}=[A'(A')^*]_{kk}$,
and so $A'(A')^*_{:,k}=(A')^*_{:,k}$, which with the first assertion,
shows the second assertion  of \Cref{star-general}.

The last assertion follows from \Cref{coro-unique-eigen}.
\end{proof}

The construction of the system of
equations $A^{(k)} v^{(k)}=\gamma v^{(k)}\ominus D^{(k)} v^{(k)}$
in \Cref{star-general} can be compared to the ones proposed in
\cite{Nishida2020,Nishida2021,nishida2021independence}
for the definition of ``algebraic eigenvectors''. The 
novelty here is to consider {\em signed} tropical matrices.
Moreover, considering symmetric positive definite matrices
leads to a special form of this construction: the ``multi-circuits'' of 
the above references are only 
unions of loops, so that the equation involves 
a diagonal matrix.

We can also show a similar result as \Cref{coro-strong1} for all the eigenvalues.
\begin{corollary}Let  $A$ and $\gamma$ be as in  \Cref{star-general}.
If $v^{(k)}$ does not belong to $(\smax^\vee)^n$, or if $A$ is irreducible, then 
$A$ has no strong $\smax$-eigenvector associated to the eigenvalue $\gamma$.
\end{corollary}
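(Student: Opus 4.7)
The plan is to adapt the argument of \Cref{coro-strong1} to an arbitrary simple $\smax$-eigenvalue $\gamma = \gamma_k$. I would suppose, for contradiction, that $w\in (\smax^\vee)^n \setminus \{\zero\}$ satisfies the strict equality $Aw = \gamma w$. The preliminary observation, crucial for both alternatives, is that $w_i = \zero$ for every $i<k$. Indeed, the $i$-th coordinate reads $\gamma_i w_i \oplus \bigtsum_{j\neq i} a_{ij} w_j = \gamma w_i$; applying the modulus morphism (\Cref{prop-modulus}), the left-hand side has modulus at least $|\gamma_i|\,|w_i|$, while the right-hand side has modulus $|\gamma|\,|w_i|$, and since $|\gamma_i| > |\gamma|$ (because $\gamma_i \succ \gamma$ and both are signed), this forces $w_i = \zero$.

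For the first alternative ($v^{(k)} \notin (\smax^\vee)^n$), the vanishing of $w_i$ for $i<k$ gives $D^{(k)} w = \zero$, hence $A^{(k)} w = \gamma w$, which is equivalent to $A' w = w$ after inverting the diagonal signed matrix $\gamma I \ominus D^{(k)}$, where $A' = (\gamma I \ominus D^{(k)})^{-1} A^{(k)}$. Iterating and using idempotency of $\oplus$ together with the existence of $(A')^*$ (already shown in the proof of \Cref{star-general}) yields $w = (A')^* w$, so in particular $w \succeq (A')^*_{:,k}\, w_k$. By \Cref{star-general} I have $v^{(k)} = \lambda_k (A')^*_{:,k}$ with $v^{(k)}_k = \lambda_k \in \smax^\vee\setminus\{\zero\}$, and by \Cref{coro-simple-eigen}(iii) there exists $\lambda \in \smax^\vee\setminus\{\zero\}$ with $w \balance \lambda v^{(k)}$. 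Comparing $k$-th coordinates through item (ii) of \Cref{equality_balance} (both $w_k$ and $\lambda\lambda_k$ being signed) gives $w_k = \lambda\lambda_k$, whence $w \succeq (A')^*_{:,k}\, w_k = \lambda v^{(k)}$. Combined with the balance $w \balance \lambda v^{(k)}$ and $w\in (\smax^\vee)^n$, item (iii) of \Cref{equality_balance} forces $w = \lambda v^{(k)}$, and therefore $v^{(k)} = \lambda^{-1} w \in (\smax^\vee)^n$, a contradiction.

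For the second alternative ($A$ irreducible, substantively for $k\geq 2$), the same $w_i = \zero$ for $i<k$ reduces the $i$-th equation to $\bigtsum_{j\geq k} a_{ij} w_j = \zero$ as an equality in $\smax$. Since $\smax$ has no zero divisors, each summand must vanish, so $a_{ij} = \zero$ whenever $j\geq k$ and $w_j\neq\zero$. Setting $S = \{j\geq k : w_j \neq \zero\}$, which is non-empty because $w\neq \zero$, I deduce $A[\{1,\ldots,k-1\}, S] = \zero$ and, by symmetry of $A$, also $A[S, \{1,\ldots,k-1\}] = \zero$; this disconnects $\{1,\ldots,k-1\}$ from $S$ in the graph $\graph(A)$, contradicting irreducibility (for $k=1$ the second alternative degenerates to the first, since in that case $w_i = \zero$ is vacuous). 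The main technical obstacle is the initial reduction $w_i = \zero$ for $i<k$, as the signed ``subtraction'' in $\smax$ does not invert $\oplus$ in general; once this foothold is secured, both alternatives follow by coupling the Kleene-star identity of \Cref{star-general} with the balance-to-equality propagation of \Cref{equality_balance}.
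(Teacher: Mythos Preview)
Your preliminary step ($w_i=\zero$ for $i<k$) and your first alternative are correct and essentially identical to the paper's argument, which likewise passes through $w=(A')^*w$, the balance $w\balance\lambda v^{(k)}$ from \Cref{coro-simple-eigen}(iii), and \Cref{equality_balance} to force $w=\lambda v^{(k)}$.

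For the irreducible case you take a genuinely different route. The paper does \emph{not} start a fresh argument: it simply continues from the conclusion $w=\lambda v^{(k)}$ of the first part, notes that then $v^{(k)}_i=\zero$ for $i<k$, and invokes \Cref{irreducible} to see that irreducibility of $A$ (hence of $(\gamma I\ominus D^{(k)})^{-1}A^{(k)}$) makes every entry of the Kleene star, and thus of $v^{(k)}$, nonzero --- a contradiction. Your direct graph-disconnection idea is more elementary in that it avoids re-entering the Kleene-star machinery.

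However, your disconnection argument has a gap as written. You only inspect the rows $i<k$ and obtain $A[\{1,\ldots,k-1\},S]=\zero$; this does \emph{not} disconnect $\{1,\ldots,k-1\}$ from $S$ in $\graph(A)$, because a path could go through $T:=\{j\geq k: w_j=\zero\}$. The fix is immediate: for \emph{every} index $i$ with $w_i=\zero$ (so for $i\in\{1,\ldots,k-1\}\cup T=[n]\setminus S$), the $i$-th equation reads $\bigtsum_{j\in S}a_{ij}w_j=\zero$, and the same reasoning gives $a_{ij}=\zero$ for all $j\in S$. Thus $A[[n]\setminus S,\,S]=\zero$ and, by symmetry, $A[S,\,[n]\setminus S]=\zero$, which now genuinely separates $\graph(A)$ into the two nonempty pieces $S$ and $[n]\setminus S$ (the latter contains $\{1,\ldots,k-1\}$ when $k\geq 2$). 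A minor terminological point: the step ``$\bigoplus_j c_j=\zero\Rightarrow$ each $c_j=\zero$'' is not ``no zero divisors'' but the zerosumfree property of $\smax$ (apply the modulus morphism of \Cref{prop-modulus}); absence of zero divisors is what you need for the subsequent step $a_{ij}w_j=\zero\Rightarrow a_{ij}=\zero$ when $w_j\neq\zero$.
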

\begin{proof}
Assume by contradiction that $w$ is a strong $\smax$-eigenvector associated to the eigenvalue $\gamma$. 
This means that $w\in (\smax^\vee)^n\setminus\{\zero\}$ and
$\gamma w=A w$. Using the decomposition $A=A^{(k)}+D^{(k)}$ of 
\Cref{star-general}, we see that $\gamma_k w\succeq D^{(k)} w$,
hence $\gamma_k w_i\succeq \gamma_i w_i$, for $i=1,\ldots, k-1$.
Since $\gamma_i>\succ \gamma_k$, this implies that $w_i=\zero$, for all
$i=1,\ldots, k-1$.
Then $D^{(k)} w=\zero$ and we get $A^{(k)} w= Aw= \gamma w= (\gamma I\ominus D^{(k)})w$. 
 This implies 
$w= ((\gamma\ominus  D^{(k)})^{-1}A)^* w$.
Using the same arguments as in the proof of \Cref{coro-strong1}, 
with  \Cref{star-general} instead of \Cref{result_pro}, we deduce
that $w=\lambda v^{(k)}$ for some $\lambda \in \smax^\vee\setminus\{\zero\}$.
This implies that $v^{(k)}\in (\smax^\vee)^n\setminus\{\zero\}$
and that the first entries $v^{(k)}_i$ with $i=1,\ldots, k-1$ are equal to $\zero$.
If $A$ is irreducible, then so does $(\gamma\ominus  D^{(k)})^{-1}A$,
and by \Cref{irreducible},
$((\gamma\ominus  D^{(k)})^{-1}A)^*$ has only non-zero entries.
This implies that the vector $v^{(k)}=\lambda_k ((\gamma\ominus  D^{(k)})^{-1}A)^*_{:,k}$ has no zero entries, a contradiction.
\end{proof}

\subsection{Generic uniqueness of $\smax$-eigenvalues and $\smax$-eigenvectors}
\label{sec-generic}
\Cref{ex_eig2} shows that a $\smax$-eigenvector is not necessarily unique up to a multiplicative constant, even when the corresponding $\smax$-eigenvalue is simple and even when this eigenvalue is the greatest one.

Although there are many examples as above, we also have the following result, which shows that generically the $\smax$-eigenvalues are unique and the $\smax$-eigenvectors are unique up to a multiplicative constant.

To any matrix $A=(a_{ij})_{i,j\in [n]} \in \pd_n(\smax^{\vee})$, we associate
the vector $\Psi(A)=(a_{ij})_{1\leq i\leq j\leq n}$ of $(\smax^{\vee})^{n(n+1)/2}$.
The map $\Psi$ is one to one and we denote by $\upd$ the 
image of $\pd_n(\smax^{\vee})$ by the map $\Psi$.
Similarly to univariate polynomials, see \Cref{sec-polynomials},
multivariate formal polynomials and polynomial functions can be defined in any semiring ${\mathcal S}$.
We can also consider formal Laurent polynomials, which are sequences
$P_\alpha\in {\mathcal S}$ indexed by elements $\alpha\in \Z^d$ (where $d$ is the number of variables and $\Z$ is the set of relative integers),
such that $P_\alpha=\zero$ for all but a finite number of indices $\alpha$.
Then the Laurent polynomial function $\widehat{P}$
can only be applied to vectors $x\in ({\mathcal S}\setminus\{\zero\})^d$.

The following result shows that generically,
a matrix $A \in \pd_n(\smax^{\vee})$ has simple $\smax$-eigenvalues and unique 
$\smax$-eigenvectors associated all its eigenvalues, up to multiplicative constants, and these  $\smax$-eigenvectors are the vectors  $v^{(k)}$.
\begin{theorem} \label{theoremgeneric}
There exist a finite number of 
formal polynomials $(P_k)_{k\in I}$  over $\smax$, with coefficients in $\smax^\vee$ and  $n(n+1)/2$ variables, 
such that for 
any matrix $A \in \pd_n(\smax^{\vee})$, and $x=\Psi(A)$, 
satisfying $\widehat{P_k}(x)\in \smax^{\vee}\setminus\{\zero\}$ for all $k\in I$, we have that 
$A$ has simple $\smax$-eigenvalues and all the vectors $v^{(k)}$ defined
as above are in $(\smax^{\vee}\setminus\{\zero\})^n$.
\end{theorem}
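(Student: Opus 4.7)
The plan is to exhibit a finite family of formal polynomials $(P_k)_{k\in I}$ in the $n(n+1)/2$ variables $\X=(\X_{pq})_{1\leq p\leq q\leq n}$, each with coefficients in $\smax^\vee$, whose joint signed non-zero evaluation at $x=\Psi(A)$ forces both (i) all diagonal entries $a_{ii}$ of $A$ to be pairwise distinct, which by \Cref{sym_eigs} makes every $\smax$-eigenvalue simple, and (ii) each entry of each vector $v^{(k)}$ to lie in $\smax^\vee\setminus\{\zero\}$.

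For (i), I would include the polynomials $P^{(1)}_{ij}:=\X_{ii}\ominus\X_{jj}$ for $1\leq i<j\leq n$. Their coefficients $\unit,\ominus\unit$ lie in $\smax^\vee$, and since the diagonal entries $a_{ii},a_{jj}$ of any $A\in\pd_n(\smax^\vee)$ are in $\smax^\oplus$, the evaluation $\widehat{P^{(1)}_{ij}}(\Psi(A))=a_{ii}\ominus a_{jj}$ automatically lies in $\smax^\vee$ and is non-zero precisely when $a_{ii}\neq a_{jj}$.

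For (ii), viewing $A$ as the formal symmetric matrix with entries $\X_{pq}$ (with $\X_{pq}=\X_{qp}$ for $p\leq q$), each entry $(v^{(k)})_i=(\ominus\unit)^{i+k}\det(B_k[\hat{k},\hat{i}])$ with $B_k=\X_{kk}I\ominus A$ expands into a sum of signed monomials in $\X$. Grouping by sign yields a decomposition $\det(B_k[\hat{k},\hat{i}])=R^+_{k,i}(\X)\ominus R^-_{k,i}(\X)$, where $R^{\pm}_{k,i}$ have coefficients in $\{\zero,\unit\}\subset\smax^\vee$. Since $\widehat{R^{\pm}_{k,i}}(x)\in\smax^\oplus$, the entry $(v^{(k)})_i$ lies in $\smax^\vee\setminus\{\zero\}$ if and only if the tropical maxima $\widehat{R^+_{k,i}}(x)$ and $\widehat{R^-_{k,i}}(x)$ differ in $\tmax$. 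I would enforce this through the pairwise difference polynomials $P^{(2)}_{k,i,\alpha,\beta}:=\X^\alpha\ominus\X^\beta$ for $(\alpha,\beta)\in\mathrm{supp}(R^+_{k,i})\times\mathrm{supp}(R^-_{k,i})$: each has coefficients in $\smax^\vee$, and the condition $\widehat{P^{(2)}_{k,i,\alpha,\beta}}(x)\in\smax^\vee\setminus\{\zero\}$ is exactly $x^\alpha\neq x^\beta$ in $\tmax$. Imposing this for every such pair forces the maxima of the two sides to be distinct, hence $(v^{(k)})_i\in\smax^\vee\setminus\{\zero\}$; to rule out the possibility that both $\widehat{R^{\pm}_{k,i}}(x)$ vanish (or that one vanishes when the other's support is empty), I would also include the monomial polynomials $\X^\alpha$ for each $\alpha$ in the relevant support.

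The main obstacle is that a naive single test polynomial $R^+_{k,i}\ominus R^-_{k,i}$ generally fails the hypothesis of having coefficients in $\smax^\vee$: whenever a monomial $\X^\alpha$ arises with both positive and negative contributions in the determinant expansion, its formal coefficients combine into a balanced element $\unit^\circ\in\smax^\circ$, which is excluded from $\smax^\vee$. Keeping the positive and negative supports separated and testing through pairwise differences $\X^\alpha\ominus\X^\beta$ circumvents this, at the cost of a larger but still finite family of test polynomials. The remaining verification, routine from the multilinearity of $\det$ and the distributivity of $\oplus$ and $\odot$ in $\smax$, is that the polynomial function $A\mapsto(v^{(k)})_i(A)$ on $\pd_n(\smax^\vee)$ agrees with $\widehat{R^+_{k,i}}(\Psi(A))\ominus\widehat{R^-_{k,i}}(\Psi(A))$.
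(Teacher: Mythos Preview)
Your treatment of part (i), using the polynomials $\X_{ii}\ominus\X_{jj}$, matches the paper exactly.

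For part (ii) your argument has a genuine gap, and it is precisely the ``main obstacle'' you identify that is not actually circumvented. Suppose some monomial $\X^{\alpha_0}$ lies in $\mathrm{supp}(R^{+}_{k,i})\cap\mathrm{supp}(R^{-}_{k,i})$. Then the pair $(\alpha_0,\alpha_0)$ belongs to $\mathrm{supp}(R^{+}_{k,i})\times\mathrm{supp}(R^{-}_{k,i})$, and the associated polynomial $\X^{\alpha_0}\ominus\X^{\alpha_0}$ has a single coefficient $\unit\ominus\unit=\unit^{\circ}\notin\smax^{\vee}$; it therefore cannot be included in the family. If you exclude such diagonal pairs and only impose $x^{\alpha}\neq x^{\beta}$ for $\alpha\neq\beta$, nothing prevents the maximum over $\mathrm{supp}(R^{+}_{k,i})$ and over $\mathrm{supp}(R^{-}_{k,i})$ from both being attained at $\alpha_0$, in which case $\widehat{R^{+}_{k,i}}(x)=\widehat{R^{-}_{k,i}}(x)$ and $(v^{(k)})_i$ is balanced. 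So the implication you need fails. Your proof would be rescued only by showing $\mathrm{supp}(R^{+}_{k,i})\cap\mathrm{supp}(R^{-}_{k,i})=\emptyset$, but you never do this; and if that disjointness held, the ``naive'' polynomial $R^{+}_{k,i}\ominus R^{-}_{k,i}$ would already have coefficients in $\smax^{\vee}$ and the pairwise trick would be unnecessary.

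The paper takes a different route that sidesteps this issue entirely. Rather than working from the raw determinant expansion, it invokes the Kleene-star formula of \Cref{star-general},
\[
v^{(k)}_i=\lambda_k\bigl[((\gamma_k I\ominus D^{(k)})^{-1}A^{(k)})^{*}\bigr]_{ik},
\]
and observes that the right-hand side is a sum over \emph{elementary} paths from $i$ to $k$. Distinct elementary paths yield distinct monomials in the $\X_{pq}$, so the resulting formal polynomial $Q_{ki}$ has each monomial appearing exactly once and hence has all coefficients in $\smax^{\vee}$. The paper then uses $Q_{ki}$ itself (together with all its permuted versions $Q^{\sigma}_{ki}$, to account for the unknown ordering of the diagonal entries) as the test polynomial. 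This is both conceptually cleaner and avoids the overlap question altogether. You should also note that your construction implicitly assumes $a_{11}\succ\cdots\succ a_{nn}$; the paper handles general $A$ by taking the union over all permutations $\sigma$ of $[n]$.
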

\begin{proof}
Let $x=\Psi(A)$ for some matrix $A \in \pd_n(\smax^{\vee})$, then
$x=(x_{ij})_{1\leq i\leq j\leq n}\in (\smax^\vee)^{n(n+1)/2}$,
$x_{ii}\in \smax^{\oplus}$, for all $i\in [n]$ and $x_{ij}< x_{ii}\odot x_{jj}$
for all $1\leq i<j\leq n$.
The $\smax$-eigenvalues of $A$ are simple if and only if
$x_{ii}\neq x_{jj}$ for all $i\neq j$.
Take for all $i\neq j$, the formal polynomial $P_{ij}$ equal to $x_{ii}\ominus x_{jj}$, we get that the latter holds if and only if
$\widehat{P_{ij}}(x)$ is not balanced for all $i\neq j$.

If this holds, then there is a permutation of the indices
$i\in [n]$, such that $x$ satisfies  $x_{11}>\cdots>x_{nn}$.
Then, the vectors $v^{(k)}$ are given 
by the formula in \Cref{star-general}, that is, for all $i\neq k$, we have 
$v^{(k)}_i= \lambda_k  ((\gamma_k I\ominus D^{(k)})^{-1} A^{(k)})_{i,k}^*$
where
$\gamma_k=a_{kk}=x_{kk}$,
$\lambda_k=v^{(k)}_k=  (\ominus \unit)^{k-1} \gamma_1\cdots \gamma_{k-1}\gamma_k^{n-k}$,
$\gamma_k I\ominus D^{(k)}$ is the diagonal matrix with diagonal entries
$\ominus \gamma_1,\ldots, \ominus \gamma_{k-1},\gamma_k,\ldots, \gamma_k$,
or equivalently $\ominus x_{11},\ldots,\ominus x_{k-1,k-1}, x_{kk},\ldots, x_{kk}$,
and $A^{(k)}$ 
has entries $A^{(k)}_{ij}= a_{ij}=x_{ij}$ when $i<j$, or $i=j\geq k$, 
$A^{(k)}_{ij}= a_{ij}=x_{ji}$ when $j<i$, and $\zero$ elsewhere.
Using that the above Kleene's star exists and is equal to the finite sum
of the powers of the matrix up to power $n$, we deduce that the 
expressions of all the $v^{(k)}_i$ are Laurent polynomials functions 
in the entries of $x$.
These polynomials correspond to the weights of
paths from $i$ to $k$ in the graph of $A$,
defined by the matrix
$(\gamma_k I\ominus D^{(k)})^{-1} A^{(k)}$,
and since we assumed that the $\smax$-eigenvalues are simple, 
the above matrix has no circuit of weight $0$ (see the 
proof of \Cref{star-general}), then we can restrict the sum to elementary
paths (that is paths with no circuits).
Let $Q_{ki}$ be the formal (Laurent) polynomial corresponding to the
expression of $v^{(k)}_i$, 
then the above properties imply that each monomial in the coordinates of $x$ 
appears only once, and so the coefficients of all monomials are in $\smax^\vee$.
Moreover, due to the multiplication of the weights 
by $\lambda_k$, the degree of each monomial with respect 
to each variable is $\geq 0$ that is $Q_{ki}$ is an ordinary polynomial.
Then, all the $v^{(k)}$ are in $(\smax^\vee)^n$ if and only if
$\widehat{Q_{ki}}(x)\in \smax^\vee$ for all $k\neq i$.
This property holds indeed if  $x_{11}>\cdots>x_{nn}$.
Now take apply any permutation $\sigma$ of the indices on the polynomials 
$Q_{ki}$, we obtain polynomials $Q^{\sigma}_{ki}$.
The previous properties imply that if
$\widehat{P_{ij}}(x)$ is not balanced for all $i\neq j$
and $\widehat{Q^{\sigma}_{ki}}(x)\in \smax^\vee$ for all $k\neq i$ and
$\sigma$ a permutation of $[n]$, then
$A$ has simple eigenvalues and all the vectors $v^{(k)}$ belong 
to $(\smax^\vee\setminus\{\zero\})^n$.
\end{proof}

Another way to see this result is to consider polyhedral complexes,
see for instance in \cite{maclagan2015introduction}.
A polyhedral complex $\mathcal C$ in $\R^m$, $m\geq 1$,
 is a collection of polyhedra
of $\R^m$, called cells, such that the intersection of any two polyhedra 
is a face of each of the two polyhedra or is empty.
Then, the support of $\mathcal C$ is the union of all its cells,
and its dimension is the maximal dimension $p\leq m$ of its cells
(and its codimension is $m-p$). If the support is convex, then the dimension
of the complex is the dimension of its support.
The set of cells of dimension at most $p-1$ is a sub-polyhedral complex of
$\mathcal C$ with dimension $p-1$, which is the complementary
in the support of $\mathcal C$ of the union of the interiors of the
cells of $\mathcal C$ of maximal dimension.
Polyhedral complexes are useful to explain the geometric structure of
tropical varieties or prevarieties.
Given a $m$-variables formal (Laurent) polynomial $P$ over $\rmax$, the set of 
vectors $x\in \R^m$ such that the maximum in the expression of
$\widehat{P}(x)$ is attained at least twice defines a tropical hypersurface.
It defines a  polyhedral complex $\mathcal C$ in $\R^m$ covering $\R^m$,
such that tropical hypersurface is exactly the support of 
sub-polyhedral complex of $\mathcal C$ obtained by taking the
cells of dimension at most $m-1$.
In particular, the set of points for which the maximum  in the expression of
$\widehat{P}(x)$ is attained only once is the union of the interiors 
of cells of maximal dimension of $\mathcal C$.

Now assume that $\Gamma=\R$.
 We denote by $\mu$ the map from $(\smax^\vee)^m$ to $\rmax^m$
which associates to any vector $x$, the vector $|x|$ 
obtained by applying the modulus map entrywise. Then, any element of $\rmax^m$
has a finite preimage by $\mu$ of cardinality at most $2^m$.
Moreover, if $P$ is a formal polynomial with coefficients in $\smax^\vee$ and
 $m$ variables, and 
$Q$ denotes the formal polynomial with coefficients $Q_\alpha=|P_{\alpha}|$,
then the set of points  $x\in(\smax^\vee\setminus\{\zero\})^m$
such that $\widehat{P}(x)\in \smax^\circ$ is included in the preimage 
by $\mu$ of the set of points $x\in \R^m$ such that the maximum in
the expression of $\widehat{Q}(x)$ is attained at least twice.

Consider the set 
$K$ which is the image by $\Phi:=\mu\circ \Psi$ of $\pd_n(\smax^{\vee})$.
We have
\[ K\cap \R^{n(n+1)/2}=\{x=(x_{ij})_{1\leq i\leq j\leq n}\in \R^{n(n+1)/2}\mid
2 x_{ij}< x_{ii}+x_{jj}\;\forall\; 1\leq i<j\leq n\}\enspace ,\]
where we used the usual notations of $\R$.
Therefore, the closure of $K\cap \R^{n(n+1)/2}$  in $\R^{n(n+1)/2}$ 
is a convex polyhedron of dimension
$n(n+1)/2$ that we shall denote by $\widetilde{K}$.
The previous result implies the following one.

\begin{corollary}\label{corogeneric}
There exists a  polyhedral complex $\mathcal C$ of $\R^{n(n+1)/2}$ with support 
$\widetilde{K}$, such that the image by $\Phi$ of the set of matrices
$A \in \pd_n(\smax^{\vee})$ such that 
all the eigenvalues of $A$ are simple and all the vectors $v^{(k)}$ defined
as above are in $\smax^{\vee}\setminus\{\zero\}$
contains all the interiors of cells of maximal
dimension of $\mathcal C$.
\end{corollary}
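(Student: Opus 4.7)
The plan is to deduce \Cref{corogeneric} from \Cref{theoremgeneric} by a standard polyhedral-geometric argument. Set $m := n(n+1)/2$ and let $\{P_k\}_{k \in I}$ be the finite family of formal polynomials over $\smax$ with coefficients in $\smax^\vee$ provided by \Cref{theoremgeneric}.

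First, to each $P_k$ I would associate the tropical polynomial $|P_k|$ obtained by applying the modulus map to its coefficients. The set in $\R^m$ where the supremum in the polynomial function $\widehat{|P_k|}$ is attained at least twice is a tropical hypersurface, which is precisely the support of the codimension-one skeleton of a polyhedral complex $\mathcal C_k$ covering $\R^m$; on the relative interior of every $m$-dimensional cell of $\mathcal C_k$, the supremum in $\widehat{|P_k|}$ is attained at a single monomial.

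Next, I would take the common refinement of the complexes $(\mathcal C_k)_{k \in I}$ and intersect it with the convex polyhedron $\widetilde K$ cut out by the linear inequalities $2 x_{ij} \leq x_{ii} + x_{jj}$, $1 \leq i < j \leq n$. This produces a polyhedral complex $\mathcal C$ of $\R^m$ with support $\widetilde K$. Since $\widetilde K$ is $m$-dimensional and convex, the relative interior of every $m$-dimensional cell of $\mathcal C$ is contained in the interior of $\widetilde K$, which coincides with $K \cap \R^m$ because the latter is defined by the same inequalities taken strictly. Consequently, any $x$ in such a relative interior belongs to $K$, and therefore admits at least one preimage $A \in \pd_n(\smax^\vee)$ under $\Phi = \mu\circ\Psi$.

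The crucial observation is then that for such an $x$ and every $k \in I$, the maximum in $\widehat{|P_k|}(x)$ is attained by exactly one monomial. Since $P_k$ has coefficients in $\smax^\vee$ and the coordinates of $\Psi(A)$ lie in $\smax^\vee$, the value of this unique dominating monomial at $\Psi(A)$ is a signed element of $\smax^\vee \setminus \{\zero\}$, while all other monomials of $\widehat{P_k}(\Psi(A))$ have strictly smaller modulus. Hence $\widehat{P_k}(\Psi(A)) \in \smax^\vee \setminus \{\zero\}$ regardless of the particular signs chosen for the entries of $A$. Applying \Cref{theoremgeneric}, any such $A$ has only simple $\smax$-eigenvalues and all vectors $v^{(k)}$ in $(\smax^\vee \setminus \{\zero\})^n$, so $x$ lies in the image under $\Phi$ of the set described in the corollary. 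The only subtle step is the observation that signs play no role once the dominating monomial is unique; beyond this, the construction of the common refinement and its intersection with $\widetilde K$ is standard polyhedral geometry and presents no real obstacle.
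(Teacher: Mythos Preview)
Your proposal is correct and follows essentially the same route as the paper: both arguments pass to the modulus polynomials $|P_k|$, build the associated polyhedral complexes $\mathcal C_k$ on $\R^{m}$, take their common refinement, intersect with $\widetilde K$, and then use the key observation that when the maximum in $\widehat{|P_k|}$ is achieved by a single monomial the signed evaluation $\widehat{P_k}(\Psi(A))$ automatically lands in $\smax^\vee\setminus\{\zero\}$ for every choice of signs. Your write-up is slightly more explicit than the paper's about why interiors of top-dimensional cells lie inside $\operatorname{int}(\widetilde K)=K\cap\R^{m}$ and about the sign-independence step, but there is no substantive difference in strategy.
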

\begin{proof}
By \Cref{theoremgeneric}, we have that if $x=\Psi(A)$ 
with $A \in \pd_n(\smax^{\vee})$, and
$\widehat{P_k}(x)\in \smax^\vee\setminus\{\zero\}$ for all $k\in I$, then 
$A$ has simple $\smax$-eigenvalues and all the vectors $v^{(k)}$ defined
as above are in $(\smax^{\vee}\setminus\{\zero\})^n$.
Consider the formal polynomial $Q_k=|P_k|$, then 
by the above comments, $\widehat{P_k}(x)\in \smax^\vee\setminus\{\zero\}$
holds if $|x|\in \R^m$  and the maximum in
the expression of $\widehat{Q_k}(|x|)$ is attained only once.
Each polynomial $Q_k$ defines a polyhedral complex ${\mathcal C}_k$
covering $\R^{n(n+1)/2}$, such that
the set of points for which the maximum  in the expression of
$\widehat{Q_k}(x)$ is attained only once is the union of the interiors 
of cells of maximal dimension of $\mathcal C_k$.
Let $\mathcal C$ be 
the refinement of all the  polyhedral complexes ${\mathcal C}_k$.
We get that if $|x|=\Phi(A)$ is in the interior of a cell
 of maximal dimension of $\mathcal C$, then $A$ 
has simple $\smax$-eigenvalues and all the vectors $v^{(k)}$ defined
as above are in $(\smax^{\vee}\setminus\{\zero\})^n$.

Taking the intersection of the cells with $\widetilde{K}$,
we obtain a polyhedral complex with support equal to   $\widetilde{K}$,
and satisfying the property in the corollary. 
\end{proof}

\section{Applications}\label{sec:apps}
\subsection{Signed valuations of positive definite matrices}
\begin{definition}[See also \protect{\cite{allamigeon2020tropical}}]
\label{def-sign}
For any ordered field $\rfield$, we define the {\em sign map}
$\sign:\rfield \to \bmaxs^\vee=\{\zero, \ominus \unit, \unit\}$ as follows, for all $\elf\in \rfield$,
\[ \sign(\elf):=\begin{cases}
\unit&\elf>0,\\
\ominus \unit& \elf<0,\\
\zero& \elf=0. 
\end{cases}\]
Assume that $\rfield$ is also a valued field, with a convex valuation  $\vall$ with value group $\vgroup$, then the \textit{signed valuation} on $\rfield$ is the map $\sval:\rfield \to \smax=\smax(\vgroup)$ which associates the element $\sign(\elf) \odot \vall(\elf)\in \smax^\vee$, with an element $\elf$ of $\rfield$,
where here $\vall(\elf)$ is seen as an element of $\smax^{\oplus}$.
\end{definition}
Recall that the value group is an ordered group such that $\vall \colon \rfield \to \vgroup \cup \{\botelt \}$ is surjective, that $\vall$ is a (non-Archimedean) valuation if 
\[\begin{aligned}
 \vall(b) = \botelt &\iff b = 0 \, , \\
\forall b_{1}, b_{2} \in \rfield, \ \vall(b_{1}b_{2}) &= \vall(b_{1}) + \vall(b_{2}) \, , \\
\forall b_{1}, b_{2} \in \rfield, \ \vall(b_{1} + b_{2}) &\le \max(\vall(b_{1}),\vall(b_{2})) \, 
\end{aligned}\]
and that the convexity of $\vall$ is equivalent to the condition that
$\elf_{1} , \; \elf_{2} \in \rfield\, ,\;\text{and}\; 
 0 \le \elf_{2} \le \elf_{1}$ implies $\vall(\elf_{2})\leq  \vall(\elf_{1})$.
Then, the signed valuation is surjective, and it is 
a morphism of hyperfields or of systems, meaning
in the case of a map from a field $\rfield$ to $\smax$, that it satisfies,
for all $a,a',b\in \rfield$, we have 
\begin{align*}
&\sval(0)=\zero \\ 
& \sval(\rfield\setminus\{0\})\subset \smax^\vee\setminus\{\zero\}\\
&\sval(-a)=\ominus \sval(a)\\
&\sval(a + a')\preceq^\circ \sval(a)\oplus \sval(a')\\
&\sval(b a)=\sval(b)\odot \sval(a).
\end{align*}
The above construction applies in particular when $\rfield$ is a
real closed valued field with a convex valuation, and so 
to the field of formal generalized Puiseux series.

Signed valuation can be applied to matrices over $\rfield$ and polynomials 
over $\rfield$.
Then, it is proved in \cite[Th.\ 4.8]{tropicalization} that any tropical positive semidefinite matrix is the image by the signed valuation of a positive semidefinite matrix over the field of formal generalized Puiseux series.
We can obtain the same 
property for positive definite matrices, and a general field $\rfield$.

\begin{proposition}[Compare with \protect{\cite[Th.\ 4.8]{tropicalization}}]
Let $\rfield$ be an ordered field with a convex valuation
with value group $\vgroup$, and let $A\in \pd_n(\smax^{\vee})$.
Then, there exists a $n\times n$ symmetric  positive definite matrix ${\bf A}$ 
over $\rfield$ such that  $\sval({\bf A})= A$.
\end{proposition}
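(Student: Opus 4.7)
The plan is to construct a symmetric positive definite lift $\mathbf{A}$ entrywise, and then verify positive definiteness via a valuation-dominance argument using the characterization of $\pd$ matrices from \Cref{def_pd1}.

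First, I would observe that the signed valuation $\sval:\rfield\to\smax$ is surjective: for $\zero$ take $0\in\rfield$, while for $a\in\smax^\vee\setminus\{\zero\}$, surjectivity of $\vall:\rfield\to\vgroup\cup\{\botelt\}$ onto $\vgroup$ provides some $\elf\in\rfield^\times$ with $\vall(\elf)=|a|$, and we can flip the sign if necessary. Using this, for each pair $i\leq j$, I would pick a preimage $\mathbf{a}_{ij}\in\rfield$ of $a_{ij}$, and then set $\mathbf{a}_{ji}:=\mathbf{a}_{ij}$. This gives a symmetric $n\times n$ matrix $\mathbf{A}$ over $\rfield$ with $\sval(\mathbf{A})=A$. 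Note that $\mathbf{a}_{ii}>0$, since \Cref{def_pd1} gives $a_{ii}\in\smax^\oplus\setminus\{\zero\}$.

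Next, I would verify that $\mathbf{x}^T\mathbf{A}\mathbf{x}>0$ for every nonzero $\mathbf{x}\in\rfield^n$. Set $S:=\{i:\mathbf{x}_i\neq 0\}\neq\emptyset$ and, for $i\in S$, define $w_i:=\vall(\mathbf{a}_{ii}\mathbf{x}_i^{2})=\vall(\mathbf{a}_{ii})+2\vall(\mathbf{x}_i)\in\vgroup$. Let $m:=\max_{i\in S}w_i$ and $T:=\{i\in S:w_i=m\}$. From \Cref{def_pd1}, for $i\neq j$ we have $a_{ij}^{\odot 2}\lsign a_{ii}\odot a_{jj}$, which translates to $2\vall(\mathbf{a}_{ij})<\vall(\mathbf{a}_{ii})+\vall(\mathbf{a}_{jj})$ in $\vgroup$ (trivially true with the convention $\vall(0)=\botelt$ when $a_{ij}=\zero$). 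Adding $2\vall(\mathbf{x}_i)+2\vall(\mathbf{x}_j)$ yields
\[
2\bigl(\vall(\mathbf{a}_{ij})+\vall(\mathbf{x}_i)+\vall(\mathbf{x}_j)\bigr)<w_i+w_j\leq 2m,
\]
and since $\vgroup$ is totally ordered (hence torsion-free), this forces $\vall(\mathbf{a}_{ij}\mathbf{x}_i\mathbf{x}_j)<m$ for every off-diagonal pair. By definition, every diagonal term $\mathbf{a}_{ii}\mathbf{x}_i^{2}$ with $i\in S\setminus T$ also has valuation strictly less than $m$.

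Finally, I would split $\mathbf{x}^T\mathbf{A}\mathbf{x}=\sum_{i\in T}\mathbf{a}_{ii}\mathbf{x}_i^{2}+R$, where $R$ collects all remaining terms. Each summand in the first sum is strictly positive with valuation exactly $m$; since the valuation is convex on the ordered field (a sum of strictly positives cannot drop in valuation), the first sum is strictly positive with valuation precisely $m$. By the ultrametric inequality, $\vall(R)<m$, and then convexity of the valuation gives $|R|$ strictly smaller than the positive first sum, so $\mathbf{x}^T\mathbf{A}\mathbf{x}>0$. The only real obstacle is the bookkeeping — in particular verifying the non-cancellation of positives with common valuation, and correctly translating the $\lsign$ inequalities of \Cref{def_pd1} into strict inequalities in $\vgroup$ — but once this is set up the argument is essentially an Archimedean-style diagonal dominance carried by the valuation.
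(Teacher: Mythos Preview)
Your argument is correct, but it differs from the paper's. Both proofs start identically, picking an arbitrary symmetric lift $\mathbf{A}$ via surjectivity of $\sval$. From there the paper goes through principal minors: it invokes \Cref{diag_cycle} to see that for any $I\subset[n]$ the identity permutation strictly dominates all other permutation weights in $A[I,I]$, so $\sval(\det(\mathbf{A}[I,I]))=\det(A[I,I])=\bigtprod_{i\in I}a_{ii}\in\smax^\oplus\setminus\{\zero\}$, hence every principal minor of $\mathbf{A}$ is positive, and the classical principal-minors criterion over ordered fields yields positive definiteness. You instead verify $\mathbf{x}^T\mathbf{A}\mathbf{x}>0$ directly, using only the $2\times 2$ inequalities of \Cref{def_pd1} together with convexity of the valuation to show that the maximal-valuation diagonal block $\sum_{i\in T}\mathbf{a}_{ii}\mathbf{x}_i^2$ strictly dominates the remainder $R$. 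Your route is more self-contained: it avoids both \Cref{diag_cycle} and the (true, but here implicit) Sylvester-type criterion over arbitrary ordered fields. The paper's route, on the other hand, makes the structural link to principal minors explicit and reuses machinery already developed in \Cref{sec:3}.
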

\begin{proof}
By the surjectivity of the signed valuation, there exists a $n\times n$
symmetric  matrix ${\bf A}$ 
over $\rfield$ such that  $\sval({\bf A})= A$.
Since $A$ is positive definite in $\smax$, 
\Cref{diag_cycle} shows that
for any cycle  $\cycle$ of length $k>1$ in $[n]$,
and support $I$, 
we have $|w(\cycle)| \lsign \bigtprod_{i\in I}a_{ii}$.
Then, the $I\times I$ submatrices of $A$ and ${\bf A}$ satisfy
$\sval(\det({\bf A}[I,I]))=\det(A[I,I])=\bigtprod_{i\in I}a_{ii}\in \smax^{\oplus}\setminus \{\zero\}$.
By definition of sign valuation, this implies that 
$\det({\bf A}[I,I]))>0$. Since this holds for all subsets $I$ of $[n]$
with at least two elements, and the same holds trivially for singletons $I=\{i\}$, we obtain that ${\bf A}$ has all its principal minors positive and so is positive definite in $\rfield$.
\end{proof}
\begin{theorem}\label{th_asymptot}
Let $\rfield$ be a real closed valued field with convex valuation
and value group $\vgroup$, and let 
$A\in \pd_n(\smax^{\vee})$.
Let ${\bf A}$ be a symmetric  positive definite matrix over $\rfield$
such that  $\sval({\bf A})= A$.
Then, the signed valuations of the eigenvalues of ${\bf A}$, counted with multiplicities, coincide with the algebraic $\smax$-eigenvalues of $A$,
that is the diagonal entries of $A$.
 \end{theorem}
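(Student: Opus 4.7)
The plan is to compare the elementary symmetric polynomials in the eigenvalues of $\bf A$ (which are the traces $\tr_k({\bf A})$) with the expressions for $\tr_k(A)$ already computed in \Cref{trace}. The key technical ingredient is that the signed valuation, which in general only satisfies the hyperfield-style inequality $\sval(a + a') \preceq^\circ \sval(a) \oplus \sval(a')$, becomes \emph{additive} when restricted to sums of strictly positive elements of $\rfield$: if $b_1, \ldots, b_m \in \rfield$ are all positive, then $\sval(\sum_j b_j) = \bigoplus_j \sval(b_j)$. This follows since the residue field of $\rfield$ is real closed and the angular components of positive elements are positive, so no cancellation in valuation can occur.

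First, since ${\bf A}$ is symmetric positive definite over the real closed field $\rfield$, its eigenvalues $\lambda_1, \ldots, \lambda_n$ are positive elements of $\rfield$, and hence $\mu_i := \sval(\lambda_i) \in \smax^\oplus$ for all $i$. Denote by $\mu_{(1)} \succeq \cdots \succeq \mu_{(n)}$ their decreasing rearrangement, and by $d_1 \succeq \cdots \succeq d_n$ the decreasing rearrangement of the diagonal entries of $A$.

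Second, I would analyze each principal minor $\det({\bf A}[K,K])$ for $K \subseteq [n]$. The submatrix ${\bf A}[K,K]$ is positive definite, so the principal minor is a positive element of $\rfield$. Expanding it as $\sum_\sigma \sign(\sigma) \prod_i {\bf a}_{i\sigma(i)}$, the identity contribution $\prod_{i\in K} {\bf a}_{ii}$ has signed valuation $\prod_{i\in K} a_{ii}$, whereas by Part 2 of \Cref{diag_cycle2} (applied to $A[K,K] \in \pd_{|K|}(\smax^\vee)$) every other permutation $\sigma$ satisfies $|w(\sigma)| \lsign \prod_{i \in K} a_{ii}$ in $\smax^\oplus$, which translates to strictly smaller valuation in $\rfield$. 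Hence the diagonal term dominates and $\sval(\det({\bf A}[K,K])) = \prod_{i \in K} a_{ii}$, a positive element.

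Third, by \Cref{comp_charpoly} we have $\tr_k({\bf A}) = \sum_{|K|=k} \det({\bf A}[K,K])$, a sum of positive elements, so by additivity of $\sval$ on positive sums and by \Cref{trace},
\[
\sval(\tr_k({\bf A})) = \bigoplus_{|K|=k} \prod_{i\in K} a_{ii} = \prod_{i \in [k]} d_i.
\]
Likewise, $\tr_k({\bf A})$ equals the $k$-th elementary symmetric polynomial $e_k(\lambda_1, \ldots, \lambda_n) = \sum_{|K|=k} \prod_{i \in K} \lambda_i$, again a sum of positive elements, so
\[
\sval(\tr_k({\bf A})) = \bigoplus_{|K|=k} \prod_{i \in K} \mu_i = \prod_{i \in [k]} \mu_{(i)}.
\]

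Fourth, combining the two expressions gives $\prod_{i \in [k]} \mu_{(i)} = \prod_{i \in [k]} d_i$ in $\smax^\oplus$ for every $k \in [n]$. An immediate induction on $k$ (taking the ratio with the identity for $k-1$, both sides lying in $\smax^\oplus \setminus \{\zero\}$ whenever $d_k \neq \zero$; the case where $d_k = \zero$ is handled separately using that the remaining $\mu_{(i)}$ must then also vanish) yields $\mu_{(k)} = d_k$ for all $k$. Thus the multisets of signed valuations of the eigenvalues of $\bf A$ and of diagonal entries of $A$ coincide. Finally, \Cref{sym_eigs} identifies the diagonal entries of $A$ with the algebraic $\smax$-eigenvalues of $A$ counted with multiplicity, concluding the proof. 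The main obstacle is the positive-sum additivity of $\sval$; once this is established via the real closed residue field, the rest is bookkeeping with symmetric functions.
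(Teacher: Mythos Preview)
Your proof is correct, and while both arguments pass through the identity $\sval(\tr_k({\bf A})) = \tr_k(A) = \prod_{i\in[k]} d_i$, you take a genuinely different route at two points. First, the paper obtains that identity in one stroke via the hyperfield morphism inequality $\sval({\bf Q}_k) \preceq^\circ (\ominus\unit)^{n-k}\tr_{n-k}(A)$ together with the fact (\Cref{trace}) that the right side lies in $\smax^\vee$, so \Cref{property-preceq}(iii) forces equality; you instead argue in two stages (dominant-term analysis of each principal minor via \Cref{diag_cycle2}, then additivity of $\sval$ on positive sums---which, incidentally, follows directly from convexity of the valuation without invoking the residue field). Second, and more significantly, once $\sval$ of the characteristic polynomial of ${\bf A}$ is identified with $P_A$, the paper invokes a general root-transfer result (\cite[Prop.~B]{baker2018descartes}, \cite[Cor.~7.2]{tavakolipour2021}) to match roots with multiplicities, whereas you bypass this black box entirely by computing $\sval(e_k(\lambda_1,\ldots,\lambda_n))$ a second way as $\prod_{i\in[k]}\mu_{(i)}$ and peeling off the factors inductively. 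Your approach is more elementary and self-contained; the paper's is shorter given the cited machinery. A minor remark: since $A\in\pd_n(\smax^\vee)$ forces every $d_i\gsign\zero$ by \Cref{def_pd1}, the $d_k=\zero$ case in your induction never arises.
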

 \begin{proof}
Since ${\bf A}\in \rfield^{n\times n}$, so the characteristic polynomial of ${\bf A}$, ${\bf Q}:= \det( \X I- {\bf A} )$ belongs to $\rfield[\X]$
and its coefficients are equal to
${\bf Q}_k =(-1)^{n-k}\tr_{n-k}({\bf A})$.
Using the above morphism property of the signed valuation,
we have $\sval({\bf Q}_k) \preceq^\circ (\ominus \unit)^{n-k}\tr_{n-k}(A)$.
Since $A$ is symmetric positive definite $(\ominus \unit)^{n-k}\tr_{n-k}(A)\in 
\smax^\vee$, so we get the equality.
This shows that the signed valuation of the characteristic polynomial ${\bf Q}$
of ${\bf A}$ is equal to the  characteristic polynomial $P_A=\det( \X I\ominus A ) $ of $A$ over $\smax$.
Since any symmetric matrix over a real closed field has $n$ eigenvalues counted
with multiplicites, ${\bf Q}$ has $n$ roots.
Using \cite[Prop.~B]{baker2018descartes} (see also \cite[Cor.\ 7.2]{tavakolipour2021}), we obtain that $P_A=\sval({\bf Q})$ has $n$ roots in $\smax$,
which coincide with the signed valuations of the roots of ${\bf Q}$,
counted with multiplicity. This means that the signed valuations of the eigenvalues of ${\bf A}$, counted with multiplicities, coincide with the algebraic $\smax$-eigenvalues of $A$.
\end{proof}

\Cref{th_asymptot} may be thought of as a non-Archimedean variation
of Gershgorin's disk theorem. The latter shows that the
eigenvalues of a matrix are not too far from is diagonal
entries. This analogy is better explained by means
of the following result, which differs from Gershgorin's
theorem in that we use a ``modulus of tropical positive
definiteness'' $\gamma$ instead of a notion of diagonal
dominance.

\begin{theorem}
Suppose $A$ is a real symmetric matrix with positive diagonal entries such that $a_{ii}a_{jj}\geq \gamma^2 a_{ij}^2$ for some $\gamma>0$. Then,
\[
\operatorname{spec}(A) \subset \cup_i B(a_{ii}, a_{ii} (n-1)/\gamma) 
\]
\end{theorem}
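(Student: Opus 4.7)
The plan is to apply a weighted version of the classical Gershgorin disk theorem, with diagonal weights chosen to rebalance the hypothesis $a_{ii}a_{jj}\geq \gamma^{2}a_{ij}^{2}$. Concretely, since $a_{ii}>0$ for every $i$, I would form the positive diagonal matrix $D:=\mathrm{diag}(1/\sqrt{a_{11}},\dots,1/\sqrt{a_{nn}})$ and pass to the similar matrix $B:=D^{-1}AD$, which shares the spectrum of $A$. A short computation gives $B_{ii}=a_{ii}$ together with $|B_{ij}|=|a_{ij}|\sqrt{a_{ii}/a_{jj}}$ for $i\neq j$.

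Next, I would rewrite the hypothesis as $|a_{ij}|\leq \sqrt{a_{ii}a_{jj}}/\gamma$ and substitute it into the previous identity to obtain the uniform row bound $|B_{ij}|\leq a_{ii}/\gamma$ for every $j\neq i$. Summing over the $n-1$ off-diagonal entries of row $i$ gives
\[ \sum_{j\neq i}|B_{ij}|\ \leq\ \frac{a_{ii}(n-1)}{\gamma}. \]
Then a direct application of Gershgorin's disk theorem to $B$ places every eigenvalue of $B$, hence of $A$, inside $\bigcup_{i}B(a_{ii},a_{ii}(n-1)/\gamma)$, which is the desired conclusion.

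I do not foresee a genuine obstacle: the whole argument is a weighted Gershgorin computation, and the only nontrivial decision is the choice of weights. The weights $1/\sqrt{a_{ii}}$ are in fact forced by the requirement that the geometric-mean bound $|a_{ij}|\leq \sqrt{a_{ii}a_{jj}}/\gamma$ collapse into an arithmetic bound of the form $|B_{ij}|\leq a_{ii}/\gamma$ uniform in $j$, which is exactly what is needed for the row sum to produce the factor $a_{ii}(n-1)/\gamma$ on the right-hand side. Note that the symmetry of $A$ enters only to ensure that its spectrum is real; the intermediate matrix $B$ need not be symmetric, but Gershgorin's theorem does not require symmetry.
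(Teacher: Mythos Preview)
Your proof is correct and follows essentially the same approach as the paper: both rescale by the weights $\sqrt{a_{ii}}$ to convert the geometric-mean hypothesis into a uniform off-diagonal bound, then invoke a Gershgorin-type argument. The only cosmetic difference is that you use the similarity $D^{-1}AD$ and apply Gershgorin's theorem directly, whereas the paper uses the congruence $D(A-\lambda I)D$ for an eigenvalue $\lambda$ and argues via the contrapositive of ``strict diagonal dominance implies nonsingularity''; these are equivalent packagings of the same computation.
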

\begin{proof}
  Set $D:= \operatorname{diag}(a_{ii}^{-1/2})$, and consider
  $B=D (A-\gamma I) D$. Observe that $|B_{ij}|\leq \gamma^{-1}$
  for $i\neq j$ whereas $B_{ii}=1-\gamma/a_{ii}$. If $\gamma$
  is an eigenvalue of $A$, then the matrix $B$ 
  is singular, hence, it cannot have a dominant diagonal,
  which entails there is an index $i\in[n]$ such that
  $|1-\gamma/a_{ii}|\leq (n-1) \gamma^{-1}$.
  It follows that $\gamma \in B(a_{ii},a_{ii}(n-1)\gamma^{-1})$. 
  \end{proof}


Now let us state a result concerning eigenvectors.
\begin{theorem}\label{th_asymptot-vector}
Let $\rfield$ be a real closed valued field with convex valuation
and value group $\vgroup$.
Let $A$, $\gamma_k$ and $B_k$ be as in \Cref{balance-adj}.
Let ${\bf A}$ be a symmetric  positive definite matrix over $\rfield$
such that  $\sval({\bf A})= A$.
Let $\lambda_1\geq \cdots \geq \lambda_n>0$ be the eigenvalues 
of ${\bf A}$.

Assume that $\gamma=\gamma_k$ is simple as an algebraic $\smax$-eigenvalue of $A$. Then, $\lambda_k$ is a simple eigenvalue.
Let ${\bf v}$ be the eigenvector of ${\bf A}$ associated to the eigenvalue $\lambda_k$ such that 
${\bf v}_k=1$.
Then
$\sval({\bf v})\balance (v^{(k)}_k)^{-1} v^{(k)}$.

Assume in addition that   $v^{(k)}=(B_k)^\adj_{:,k}\in (\smax^\vee)^n$. Then,
$\sval({\bf v})= (v^{(k)}_k)^{-1} v^{(k)}$.
 \end{theorem}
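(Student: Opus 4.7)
The strategy is to transport the adjugate-based description of $v^{(k)}$ from \Cref{coro-simple-eigen} to the real side, via the morphism properties of $\sval$. First, I would apply \Cref{th_asymptot}, together with the compatibility between the orderings on $\rfield$ and on $\smax^{\oplus}$ recalled in \Cref{order-exp}, to obtain $\sval(\lambda_i) = \gamma_i$ for every $i$. Since $\gamma_k$ is a simple $\smax$-eigenvalue, $\gamma_i \neq \gamma_k$ for all $i \neq k$, whence $\lambda_i \neq \lambda_k$, so $\lambda_k$ is a simple eigenvalue of ${\bf A}$ and its eigenspace is one-dimensional.

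Next, set ${\bf B} := \lambda_k I - {\bf A}$. I would first verify that $\sval({\bf B})_{j,l} = (B_k)_{j,l}$ for every $(j,l) \neq (k,k)$: for $j \neq l$ this follows from multiplicativity of $\sval$, and for $j = l \neq k$ from the fact that $\vall(\lambda_k) \neq \vall({\bf A}_{jj})$ prevents any cancellation in the valuation of $\lambda_k - {\bf A}_{jj}$. Since the submatrix ${\bf B}[\hat k, \hat i]$ used to compute ${\bf B}^{\adj}_{i,k}$ omits row $k$, the possibly deviant $(k,k)$ entry is never involved, and expanding the determinant while invoking the submorphism relation $\sval(a+b) \preceq^{\circ} \sval(a) \oplus \sval(b)$ and the multiplicativity of $\sval$ yields
\[
\sval({\bf B}^{\adj}_{i,k}) \preceq^{\circ} (\ominus \unit)^{i+k} \det(B_k[\hat k, \hat i]) = (B_k)^{\adj}_{i,k} = v^{(k)}_i \qquad \text{for every } i \in [n].
\]
Specializing at $i = k$, \Cref{balance-adj} tells us that $v^{(k)}_k \in \smax^{\vee} \setminus \{\zero\}$ (this is precisely where the simplicity of $\gamma_k$ enters), and the third part of Property \ref{property-preceq} then promotes $\preceq^{\circ}$ to equality, so $\sval({\bf B}^{\adj}_{k,k}) = v^{(k)}_k$; in particular ${\bf B}^{\adj}_{k,k} \neq 0$.

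Consequently ${\bf B}^{\adj}_{:,k}$ is a nonzero vector in the one-dimensional kernel of ${\bf B}$, and the normalization ${\bf v}_k = 1$ forces ${\bf v} = {\bf B}^{\adj}_{:,k} / {\bf B}^{\adj}_{k,k}$. Applying $\sval$ entrywise and using that $\preceq^{\circ}$ is preserved under multiplication by invertible signed scalars, one obtains $\sval({\bf v}_i) \preceq^{\circ} (v^{(k)}_k)^{-1} v^{(k)}_i$ for every $i$. Since $\preceq^{\circ}$ implies $\balance$ (from $b = a \oplus c$ with $c$ balanced one deduces $b \ominus a = a^{\circ} \oplus c$, a sum of balanced elements), one obtains $\sval({\bf v}) \balance (v^{(k)}_k)^{-1} v^{(k)}$. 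Finally, when $v^{(k)} \in (\smax^{\vee})^n$, both sides of this balance lie in $(\smax^{\vee})^n$, and the second part of \Cref{equality_balance} upgrades the balance to equality. The main obstacle is to control when $\preceq^{\circ}$ upgrades to $\balance$ or to equality: the crucial anchor is the entry at index $(k,k)$, whose target $v^{(k)}_k$ lies in $\smax^{\vee} \setminus \{\zero\}$ exactly when $\gamma_k$ is simple, which is what the hypothesis guarantees.
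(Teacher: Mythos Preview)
Your argument is correct, but it follows a different route from the paper's.

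The paper does not work with the adjugate of ${\bf B}=\lambda_k I-{\bf A}$ over $\rfield$ at all. Instead, it applies $\sval$ directly to the eigenvector equation ${\bf A}{\bf v}=\lambda_k{\bf v}$: multiplicativity gives $\sval(\lambda_k{\bf v})=\gamma\,\sval({\bf v})$, while the submorphism inequality gives $\sval({\bf A}{\bf v})\preceq^{\circ}A\,\sval({\bf v})$, so $\sval({\bf v})$ is itself an $\smax$-eigenvector of $A$ for $\gamma$. At that point the paper simply invokes \Cref{coro-simple-eigen}(iii), which says any such eigenvector balances a scalar multiple of $v^{(k)}$; the scalar is pinned down by looking at the $k$th coordinate. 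Your approach bypasses this uniqueness lemma entirely: you write ${\bf v}={\bf B}^{\adj}_{:,k}/{\bf B}^{\adj}_{k,k}$ explicitly and push $\sval$ through the adjugate entry by entry, comparing directly with $(B_k)^{\adj}_{:,k}=v^{(k)}$.

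What each buys: the paper's route is shorter and reuses the $\smax$-side machinery already developed, but it tacitly assumes both that $\lambda_k$ is simple and that an eigenvector with ${\bf v}_k\neq 0$ exists. Your route establishes both of these facts as byproducts (simplicity from $\sval(\lambda_i)=\gamma_i$ via \Cref{th_asymptot}, and ${\bf B}^{\adj}_{k,k}\neq 0$ from $\sval({\bf B}^{\adj}_{k,k})=v^{(k)}_k\in\smax^{\vee}\setminus\{\zero\}$), so it actually fills a gap the paper's proof leaves implicit. The cost is the extra bookkeeping of checking $\sval({\bf B})_{j,l}=(B_k)_{j,l}$ off the $(k,k)$ entry and tracking $\preceq^{\circ}$ through the determinant expansion.
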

\begin{proof}
We have ${\bf A} {\bf v}=\lambda_k {\bf v}$.
Taking the signed valuation, we have
$\gamma \odot \sval( {\bf v})=
\sval(\lambda_k) \sval( {\bf v})= \sval({\bf A} {\bf v})\preceq^\circ 
A \sval( {\bf v})$.
So $\sval( {\bf v})$ is a $\smax$-eigenvector of $A$.
From  Point (iii) of \Cref{coro-simple-eigen}, we
get that
$\sval( {\bf v})\balance \lambda v^{(k)}$ for some $\lambda\in \smax^{\vee}\setminus\{\zero\}$.
This implies in particular that $v_i= \lambda v^{(k)}_i$  for all $i$ such that
$v^{(k)}_i\in \smax^\vee$, and so for $i=k$.
Since ${\bf v}_k=1$, we get that $\sval( {\bf v})_k=\unit= \lambda v^{(k)}_k$,
so $\lambda =  (v^{(k)}_k)^{-1}$.

If we assume in addition that   $v^{(k)}\in (\smax^\vee)^n$. Then,
$\sval({\bf v})= \lambda  v^{(k)}=(v^{(k)}_k)^{-1} v^{(k)}$.
\end{proof}

Recall that by \Cref{theoremgeneric} or \Cref{corogeneric}, 
we have that generically in the moduli of the
 coefficients of the $\smax$-matrix $A$, 
all the eigenvalues $\gamma_k$ are simple and all the vectors 
 $v^{(k)}$ are in $(\smax^\vee)^n$, in which  case the above
result allows one to obtain the valuation
of all the eigenvalues and eigenvectors of $\bf A$.

\subsection{Numerical examples}
In the following, we illustrate the results of \Cref{th_asymptot} and \Cref{th_asymptot-vector} by some numerical examples. Computation of classical eigenvalues and eigenvectors is done by eig.m command in MATLAB 2019b
\begin{example}\label{ex_eig_n}
Consider
\begin{equation}
\label{defAex}
A= \begin{pmatrix}
5&4&3 &2& 1\\
4&4&3&2&1\\
3&3&3&2&1\\
2&2&2&2&1\\
1&1&1&1&1
\end{pmatrix} \in \pd_5(\smax)\quad \text{and}\quad {\bf A}(t)=\begin{pmatrix}
t^5&t^4&t^3 &t^2& t^1\\
t^4&t^4&t^3&t^2&t^1\\
t^3&t^3&t^3&t^2&t^1\\
t^2&t^2&t^2&t^2&t^1\\
t^1&t^1&t^1&t^1&t^1
\end{pmatrix} \enspace .\end{equation}
Seeing ${\bf A}$ as a matrix in the field of generalized (formal
or converging) Puiseux series,
we have $\sval({\bf A})=A$.
For any function $f:\R_+\to \R$, we define  the valuation  as
\[ \vall (f)=\limsup_{t\to\infty} \log|f(t)|/\log(t)\enspace .\]
If $f$ is already a converging Puiseux series with parameter $t$
(which is the case when $f$ is a polynomial), then
its valuation as a Puiseux series coincides with its
valuation as a function, and it can be approximated by 
\[ \vall_t(f)=\log|f(t)|/\log(t) \]
with $t$ large.
We can then define $\sval$ and $\sval_t$ accordingly, and apply
these maps entry-wise to $t$-parametric families of matrices and vectors.
So seeing now ${\bf A}$ as a family of  matrices parameterized by $t$, we have
\begin{equation}\label{def_sval}\sval({\bf A})=\sval_t({\bf A})= A \quad \forall t>0 \enspace.\end{equation}
  Denote $\gamma_i,\; i=1,\ldots,n=5$ the $\smax$-eigenvalues of $A$. Using \Cref{sym_eigs}, the $\gamma_i$ are the diagonal entries of $A$.
Denote also $\lambda_1(t)\geq \cdots \geq \lambda_n(t)$ the (classical) eigenvalues of ${\bf A}(t)$. 
By \Cref{th_asymptot}, we have $\sval(\lambda_i)=\gamma_i$, for all $i=1,\ldots,5$. In  \Cref{gamma}, we show the values of $\gamma_i$ and $\sval_t(\lambda_i)$
for  $t=10$ and $t=100$, which show the practical convergence
of $\sval_t(\lambda_i)$ towards $\gamma_i$. 
  
 \begin{table}[ht]
\caption{Values of $\gamma_i$ and $\sval_t(\lambda_i),\; i=1, \ldots, 5$,
for the matrices in \eqref{defAex}.}\label{gamma}
\begin{center}

 \begin{tabular}{c||c|c|c|c|c}

&$i=1$&$i=2$&$i=3$&$i=4$&$i=5$\\
\hline
\hline
$\gamma_i$&$5$&$4$&$3$&$2$&$1$\\

\hline

\hline
$\sval_t(\lambda_i) (t=10)$&5.0048&3.9543&2.9542&1.9542&0.9494\\
\hline

$\sval_t(\lambda_i) (t=100)$&5.0000&3.9978&2.9978&1.9978&0.9978\\
\end{tabular}
\end{center}
\end{table}
\end{example}

\begin{example}
Assume that $A$ and ${\bf A}(t)$ are as in \eqref{defAex}. 
Consider, for all $i=1,\ldots, 5$,
the  $\smax$-vector $v^{(i)}$ defined in  \Cref{vk}.
By \Cref{coro-simple-eigen}, it is a weak $\smax$-eigenvector of $A$ associated to the $\smax$-eigenvalue $\gamma_i$.
Denote, for all $i=1,\ldots, 5$,
 by ${\bf v}^{(i)}(t)$ the classical eigenvector of ${\bf A}(t)$
associated to the eigenvalue $\lambda_i(t)$, satisfying $({\bf v}^{(i)}(t))_i=1$.
Then, by \Cref{th_asymptot-vector}, we have 
$\sval({\bf v}^{(i)})\balance (v^{(i)}_i)^{-1} v^{(i)}$.
In  \Cref{vi1}, we show the vectors $(v^{(i)}_i)^{-1} v^{(i)},\;i=1,\dots,5$
(in the  first row)
and the vectors $\sval_t({\bf v}^{(i)}),\; i=1, \ldots, n$ for $t=10$ and $t=100$ (in second and  third row, respectively).
The results show the practical convergence of $\sval_t({\bf v}^{(i)})$
when $t$ goes to infinity, which holds because the vectors ${\bf v}^{(i)}(t)$
have a Puiseux series expansion in $t$.
The limit of the vectors $\sval_t({\bf v}^{(i)})$ satisfy
  $\lim_{t\to\infty} \sval_t({\bf v}^{(i)})=\sval({\bf v}^{(i)})\balance (v^{(i)}_i)^{-1} v^{(i)}$ as expected.
In particular, the $k$th entry of $\sval({\bf v}^{(i)})$
coincides with the $k$th entry of $(v^{(i)}_i)^{-1} v^{(i)}$
when the latter is in $\smax^\vee$.
Note that 
in this (nongeneric) example, for each $k$th entry of $(v^{(i)}_i)^{-1} v^{(i)}$
which is in $\smax^\circ$, the valuation of 
the  $k$th entry of ${\bf v}^{(i)}$ (that is $|(\sval({\bf v}^{(i)}))_k|$,
where the modulus is in the sense of $\smax$) is strictly lower than 
$|(v^{(i)}_i)^{-1} (v^{(i)})_k|$.

\begin{table}[ht]
\caption{Values of $(v^{(i)}_i)^{-1} v^{(i)}$ and
$\sval_t({\bf v}^{(i)}),$ 
$i=1, \ldots,5$, 
for the matrices in \eqref{defAex}.}\label{vi1}
\begin{center}
\footnotesize
  \begin{tabular}{c||c|c|c|c|c}
 &$i=1$&$i=2$&$i=3$&$i=4$&$i=5$\\
  \hline
  \hline
$(v^{(i)}_i)^{-1} v^{(i)}$&$\begin{pmatrix}
0\\
 -1\\
-2\\
-3\\
-4
\end{pmatrix}$&$\begin{pmatrix}
\ominus -1\\
 0\\
-1\\
-2\\
-3
\end{pmatrix}$&$\begin{pmatrix}
(-2)^{\circ}\\
\ominus -1\\
0\\
-1\\
-2
\end{pmatrix}$&$\begin{pmatrix}
(-3)^{\circ}\\
 (-2)^{\circ}\\
\ominus -1\\
0\\
-1
\end{pmatrix}$&$\begin{pmatrix}
(-4)^{\circ}\\
(-3)^{\circ}\\
(-2)^{\circ}\\
\ominus -1\\
0
\end{pmatrix}$\\


\hline
$\sval_t({\bf v}^{(i)})(t=10)$&$\begin{pmatrix}
0\\
 -0.9591\\
-1.9552\\
-2.9548\\
-3.9547
\end{pmatrix}$&$\begin{pmatrix}
\ominus-0.9542\\
 0\\
-0.9538\\
-1.9493\\
-2.9489
\end{pmatrix}$&$\begin{pmatrix}
-2.9450\\
\ominus -0.9493\\
0\\
-0.9538\\
-1.9494
\end{pmatrix}$&$\begin{pmatrix}
\ominus -5.9443\\
 -2.9447\\
\ominus -0.9494\\
0\\
-0.9542
\end{pmatrix}$&$\begin{pmatrix}
-9.9638\\
 \ominus -5.9591\\
-2.9548\\
\ominus -0.9547\\
0
\end{pmatrix}$\\
\hline

$\sval_t({\bf v}^{(i)})(t=100)$&$\begin{pmatrix}
0\\
 -0.9978\\
-1.9978\\
-2.9978\\
-3.9978
\end{pmatrix}$&$\begin{pmatrix}
\ominus-0.9978\\
 0\\
-0.9978\\
-1.9978\\
-2.9978
\end{pmatrix}$&$\begin{pmatrix}
-2.9978\\
\ominus -0.9978\\
0\\
-0.9978\\
-1.9978
\end{pmatrix}$&$\begin{pmatrix}
\ominus -5.9978\\
 -2.9978\\
\ominus -0.9978\\
0\\
-0.9978
\end{pmatrix}$&$\begin{pmatrix}
 -9.9979\\
 \ominus -5.9978\\
-2.9978\\
\ominus -0.9978\\
0
\end{pmatrix}$
\end{tabular}
\end{center}
\end{table}


 

\end{example}
\begin{example}\label{ex_eig_n2}
We generate a classical random positive definite matrix of size $100 \times 100$ by using the following MATLAB commands:
\begin{lstlisting}[style=Matlab-editor]
n = 100;
s = 15; rng(s);      
C1 = rand(n);
s = 12; rng(s);
C2 = rand(n);
C = C1-C2; % to have both negative and positive elements.
B = C*C'; % classical positive definite matrix
\end{lstlisting}
Consider now for any $t>0$, the map $\sval_t:\R\to\smax$ such that
$\sval_t(a)=\sign(a) \log(|a|)/ \log(t)$, for all $a\neq 0$ and
$\sval_t(0)=\zero$. This is as if $a$ is considered as the value in $t$ of
a function of $t$.
We can extend again $\sval_t$ to matrices and vectors.
We fix here $t=10$, and compute $A=\sval_t(B)$, for the
matrix $B$ randomly generated as above.
Then, generically so almost surely, we have that
$A$ is a positive definite $\smax$-matrix
and satisfies the conditions of \Cref{th_asymptot-vector}.
Let $\gamma_i$, $i=1,\ldots,n$ be the $\smax$-eigenvalues of $A$ and
$\lambda_i$, $i=1,\ldots, n$ be the eigenvalues of $B$.
In \Cref{rel_error_eigval}, we computed the relative error $\frac{|\sval_t(\lambda_i)-\gamma_i|}{\sval_t(\lambda_i)}$ for $i=1,\ldots, 100$. 
One can see that the errors are less than $1.6\times 10^{-13}$.
 \begin{figure}[h!]
\centerline{\includegraphics[width=5in]{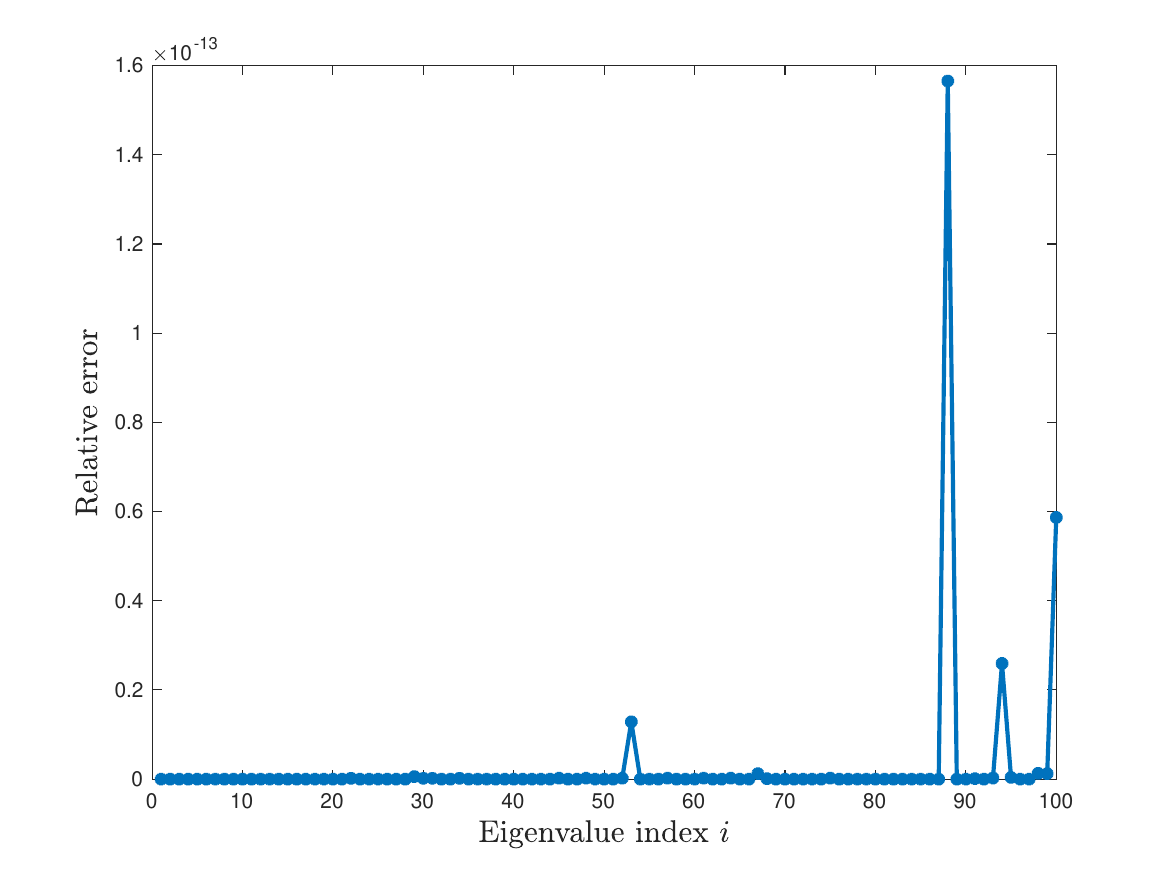}}
\caption{ The values $\frac{|\sval_t(\lambda_i)-\gamma_i|}{\sval_t(\lambda_i)}$ for $i=1,\ldots, 100$ and $t=10$ in \Cref{ex_eig_n2} }\label{rel_error_eigval}
\end{figure}
\end{example}

\bibliographystyle{alpha} 
\bibliography{refs} 

\appendix
\section{Detailed proofs of some results}

Let us first prove the following result which corresponds to the statement of \Cref{def_pd1} for $n=2$.
\begin{lemma}\label{lemma_sergey_pd}
 Let $a, b, c \in \smax^{\vee}$. Then, 
\[\zero \lsign (a x_1^{ 2}) \oplus (b  x_1  x_2) \oplus (c  x_2^{ 2})\quad \forall (x_1, x_2) \in(\smax^{\vee})^2\setminus \{(\zero,\zero)\}\]
if and only if
\[\zero \lsign a,\; \zero \lsign c, \;b^{ 2} \lsign a c\enspace .\]
\end{lemma}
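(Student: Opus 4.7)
The plan is to prove both directions separately, using specialized test vectors for the forward direction and a tropical arithmetic--maximum estimate for the reverse. Both arguments hinge on tracking moduli in $\tmax$ and signs in $\smax$ via the recalled \Cref{property-preceq} and \Cref{modulus_order}, and both rely on the divisibility of $\vgroup$.

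For the direction ($\Rightarrow$), I first specialize $(x_1,x_2)$ to $(x_1,\zero)$ with $x_1 \in \smax^\vee\setminus\{\zero\}$, reducing the quadratic form to $a x_1^{2}$. Since $x_1^{2}\in\smax^\oplus\setminus\{\zero\}$, the assumption $\zero \lsign a x_1^{2}$ forces $a\gsign \zero$; symmetrically, $c\gsign \zero$. To show $b^2 \lsign ac$, I argue by contraposition: if $b\neq \zero$ and $|b|^2 \geqsign ac$ in $\tmax$ (the case $b=\zero$ being trivial), I use divisibility of $\vgroup$ to choose $\alpha:=|x_1|,\beta:=|x_2|$ with $\beta/\alpha$ in the range $[a/|b|,\;|b|/c]$, which is nonempty precisely under $|b|^2\geqsign ac$. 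Then $|b|\alpha\beta \geqsign a\alpha^2$ and $|b|\alpha\beta \geqsign c\beta^2$. Choosing the signs of $x_1,x_2$ so that $b x_1 x_2\in \smax^\ominus$, the resulting sum lies in $\smax^\ominus$ (when the inequalities are strict) or in $\smax^\circ$ (when equalities occur), in both cases contradicting $\zero \lsign$.

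For the direction ($\Leftarrow$), I assume $a,c\gsign \zero$ and $b^2\lsign ac$. The cases where $x_1=\zero$ or $x_2=\zero$ are immediate. When both $x_1,x_2\in \smax^\vee\setminus\{\zero\}$, let $\alpha:=|x_1|,\beta:=|x_2|$. The key inequality is
\[|b|\alpha\beta \;\lsign\; a\alpha^2 \oplus c\beta^2 \qquad\text{in}\ \tmax,\]
which I derive as follows: squaring, $|b|^2\alpha^2\beta^2 \lsign ac\,\alpha^2\beta^2 \leqsign (a\alpha^2 \oplus c\beta^2)^2$, where the second step is the tropical bound $uv\leqsign (u\oplus v)^2$ (equivalent to $u+v\leq 2\max(u,v)$ in $\vgroup$); applying \Cref{modulus_order} delivers the unsquared inequality. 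Thus the modulus of the middle term is strictly dominated by the modulus of at least one diagonal term, and by \Cref{property-preceq}(i) the middle term is absorbed. The sum therefore equals $a x_1^2 \oplus c x_2^2$, which is a positive element of $\smax^\oplus\setminus\{\zero\}$ since both summands are positive.

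The main obstacle is the boundary case $|b|^2 = ac$ in the forward direction. There, the three moduli can be arranged equal, and the off-diagonal term becomes comparable to the diagonal ones rather than dominant. The resulting sum is a nonzero balanced element in $\smax^\circ\setminus\{\zero\}$, not an element of $\smax^\ominus\setminus\{\zero\}$; one must then invoke that $\smax^\circ\setminus\{\zero\}$ is disjoint from $\smax^\oplus\setminus\{\zero\}$, so balanced values still fail $\zero \lsign$. The divisibility of $\vgroup$ is essential both for realizing the critical moduli in this case and for setting up $\beta/\alpha$ in the required range throughout; the two sign cases $b\gsign \zero$ and $b\lsign \zero$ are handled symmetrically by flipping the sign of $x_2$.
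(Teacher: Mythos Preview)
Your proof is correct and takes a genuinely different route from the paper's.

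For $(\Leftarrow)$, the paper normalizes via the change of variables $x_1 = y_1 a^{-1/2}$, $x_2 = y_2 c^{-1/2}$, reducing to the case $\zero \lsign y_1^{2}\oplus u y_1 y_2 \oplus y_2^{2}$ with $|u|\lsign \unit$, and then observes that the cross term is strictly dominated in modulus by the larger of $y_1^{2},y_2^{2}$. You instead work directly with the original coefficients and obtain the domination via the tropical inequality $uv\leqsign (u\oplus v)^{2}$ combined with squaring and \Cref{modulus_order}. For $(\Rightarrow)$, the paper plugs in the single test vector $(a^{-1/2},\eta c^{-1/2})$ and reads off $|b|a^{-1/2}c^{-1/2}\lsign \unit$ immediately; your contraposition instead selects any $\beta/\alpha$ in the interval $[a/|b|,|b|/c]$ and argues the resulting value is negative or balanced.

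One remark: you invoke divisibility of $\vgroup$ in both directions, but your argument does not actually need it. In $(\Rightarrow)$ you may take $\alpha=\unit$ and $\beta=a|b|^{-1}$, an endpoint of your interval, which lies in $\vgroup$ without any root extraction; in $(\Leftarrow)$ your squaring trick avoids roots entirely. By contrast, the paper's normalization genuinely uses $a^{-1/2},c^{-1/2}$ and hence divisibility. So your approach, once the spurious divisibility claim is dropped, is in fact slightly more general than the paper's on this point, at the cost of a more explicit case analysis at the boundary $|b|^{2}=ac$.
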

\begin{proof}
$(\Rightarrow)$ Considering $x_1 = \unit $ and $x_2=\zero$, we get $\zero \lsign a$. Similarly, by considering $x_1 = \zero $ and $x_2=\unit$, we have $\zero \lsign c$. This shows that  $a,c \in \smax^{\oplus}\setminus \zero$. Then taking $x_1=a^{ -\frac{1}{2}}$ and $x_2=\eta  c^{ -\frac{1}{2}}$ with $\eta \in \{\ominus \unit, \unit\}$, we get 
\[\zero \lsign \unit \oplus (\eta  b  a^{ -\frac{1}{2}}  c^{ -\frac{1}{2}})\oplus \unit =\unit \oplus (\eta  b  a^{ -\frac{1}{2}}  c^{ -\frac{1}{2}}),\]
which is possible only if $|b|  a^{ -\frac{1}{2}}  c^{ -\frac{1}{2}} \lsign \unit$ or equivalently if $b^{ 2}  \lsign a  c$,
using second part of \Cref{product_order} and \Cref{modulus_order}.

$(\Leftarrow)$ 
Assume $a,c \in \smax^{\oplus} \setminus \{\zero\}$
and $b\in \smax^\vee$ are such that $b^{ 2} \lsign a c$. By the change of variable $x_1= y_1  a^{ -\frac{1}{2}}$ and $x_2= y_2  c^{ -\frac{1}{2}}$,  we have 
$\zero \lsign (a x_1^{ 2}) \oplus (b  x_1  x_2) \oplus (c  x_2^{ 2})$
iff
\begin{equation}\label{iff}
 \zero \lsign (y_1^{ 2}) \oplus (u  y_1  y_2) \oplus (y_2^{ 2}),
\end{equation}
where $u = b  a^{ -\frac{1}{2}}  c^{ -\frac{1}{2}}$ is such that $|u|\lsign \unit$ (again by \Cref{product_order} and \Cref{modulus_order}).
W.l.o.g. we may assume that $|y_2| \leqsign |y_1|$.
This implies that $y_1\neq \zero$, since $(x_1,x_2)\neq (\zero,\zero)$,
and that $y_2^{ 2} \leqsign y_1^{ 2}$, by  \Cref{modulus_order}.  Then 
$|u  y_1  y_2|\leqsign |u|  |y_1|^{ 2} 
\lsign  |y_1|^{ 2} =|y_1^{ 2}|$ (again by \Cref{product_order}).
So since $\leqsign$ and $\prec$ coincide in $\smax^\oplus$,
we get $|u  y_1  y_2|\prec |y_1^{ 2}|$.
Using \Cref{property-preceq}, we obtain that 
the right hand side of \eqref{iff} is equal to $y_1^{ 2} \gsign \zero$.
\end{proof}

\begin{proof}[Proof of \Cref{def_pd1}]
Let $S$ be the set given in \Cref{def_pd1}, and let $A \in S$.
We need to prove that $A$ is a  $\pd$ matrix.
Let $x\in (\smax^\vee)^n\setminus \{\zero\}$, and $i,j=1,\ldots, n$ with $i\neq j$.
Applying the ``if'' part of \Cref{lemma_sergey_pd} to $a=a_{ii}$, $b=a_{ij}$ and $c=a_{jj}$,
 we have
$\zero  \lsign (x_i a_{ii}  x_i ) \oplus (x_i  a_{ij}  x_j) \oplus (x_j  a_{jj}  x_j)$, if $(x_i,x_j)\neq (\zero,\zero)$.
Moreover the previous expression is equal to $\zero$ if $(x_i,x_j)= (\zero,\zero)$.
By summing these inequalities over all $i\neq j$, and
using that $x$ is not the $\zero$ vector, and idempotency of addition,
 we obtain
 $\zero \lsign x^T A  x$. This shows that
$S\subseteq \pd_{n}(\smax^\vee)$.
Let us prove the reverse inclusion.
Consider $A \in \pd_n(\smax^{\vee})$. Then, every $2 \times 2$ leading submatrix of $A$  is   $\pd$ and therefore by applying the ``only if'' part of \Cref{lemma_sergey_pd}, we obtain, for $i\neq j$, $\zero \lsign a_{ii}$, $\zero \lsign a_{jj}$ and $a_{ij}^{ 2} \lsign a_{ii} a_{jj}$.
Hence, $A\in S$, which shows that $\pd_{n}(\smax^\vee)\subseteq S$ and
concludes the proof of \Cref{def_pd1}.

\end{proof}
\end{document}